\newtheorem{theorem}{Theorem}[section]
\newtheorem{lemma}[theorem]{Lemma}
\newtheorem{prop}[theorem]{Proposition}
\newtheorem{cor}[theorem]{Corollary}
\theoremstyle{definition}
\newtheorem{definition}[theorem]{Definition}
\newtheorem{example}[theorem]{Example}
\newtheorem{question}[theorem]{Question}
\theoremstyle{remark}
\newtheorem*{remark}{Remark}
\newcommand{\RR}{{\mathbb R}}
\newcommand{\QQ}{{\mathbb Q}}
\newcommand{\ZZ}{{\mathbb Z}}
\newcommand{\HH}{{\mathbb H}}
\newcommand{\EE}{{\mathbb E}}
\newcommand{\seq}[1]{{\{ {#1}\}}}
\newcommand{\norm}[1]{{\|{#1}\|}}
\newcommand{\SL}{{\mathrm{SL}}}
\newcommand{\Vol}{{\mathrm{Vol}}}
\newcommand{\Area}{{\mathrm{Area}}}
\newcommand{\diam}{{\mathrm{diam}}}
\newcommand{\geo}{{\mathtt{geo}}}
\newcommand{\tori}{{\mathcal{T}}}
\newcommand{\cpn}{{\mathtt{c}}}
\newcommand{\pat}{{\mathtt{p}}}
\begin{document}

\title{Presentation length and Simon's conjecture}

\author[Ian Agol]{%
        Ian Agol} 
\address{%
    University of California, Berkeley \\
    970 Evans Hall \#3840 \\
    Berkeley, CA 94720-3840} 
\email{%
     ianagol@math.berkeley.edu}  
     
\author[Yi Liu]{%
        Yi Liu} 
\address{%
    University of California, Berkeley \\
    970 Evans Hall \#3840 \\
    Berkeley, CA 94720-3840} 
\email{%
    yliu@math.berkeley.edu}  

\thanks{Agol and Liu partially supported by NSF grant DMS-0806027}
\subjclass[2010]{57M}

\date{%
 July 6, 2010}

\begin{abstract}
In this paper, we show that any knot group maps onto at most finitely many knot groups.
This gives an affirmative answer to a conjecture of J. Simon. 
We also bound the diameter of a closed hyperbolic 3-manifold
linearly in terms of the presentation length of its fundamental group, improving a result of White. 
\end{abstract}

\maketitle

\tableofcontents

\section{Introduction}

In this paper, we prove the following theorem:

\begin{theorem}\label{main} Let $G$ be a finitely generated group with $b_1(G)=1$. 
Then there are at most finitely many distinct knot complements $M$ such that there is an epimorphism $\phi:G\twoheadrightarrow\pi_1(M)$.\end{theorem}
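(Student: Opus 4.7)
The plan is to translate the algebraic hypothesis that $M$ is a quotient of $G$ into a geometric complexity bound on $M$, and then conclude from finiteness of knot complements of bounded complexity.

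First, since each knot group $\pi_1(M)$ is finitely presented, I would reduce to the case that $G$ itself is finitely presented, so that its presentation length $T(G)$ is finite. Presentation length does not increase under quotients, so $T(\pi_1(M))\le T(G)$ for every candidate $M$. The hypothesis $b_1(G)=1$ forces any epimorphism $\phi\colon G\twoheadrightarrow\pi_1(M)$ to send a fixed generator of the free part of $G^{\mathrm{ab}}$ to $\pm$ the meridian class in $\pi_1(M)^{\mathrm{ab}}\cong\ZZ$; this identification of the meridian will enter when controlling gluings along JSJ tori.

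By geometrization each knot complement is hyperbolic, Seifert fibered (a torus knot), or a non-trivial satellite with a non-trivial JSJ decomposition. In the hyperbolic case the key input is the White-type inequality announced in the abstract, bounding the diameter, and hence the volume, of $M$ by $T(\pi_1(M))$; for cusped $M$ this should either extend directly, or follow by long Dehn fillings that converge geometrically to $M$ with a controlled effect on presentation length, after which Thurston's finiteness of hyperbolic $3$-manifolds of bounded volume leaves only finitely many such $M$. For torus knots $\langle a,b\mid a^p=b^q\rangle$, the presentation length grows linearly in $p+q$, so the bound $T(\pi_1(M))\le T(G)$ directly controls the exponents. For non-trivial satellites I would use the JSJ decomposition: bound the number of JSJ pieces in terms of the rank of $\pi_1(M)$, control the complexity of each hyperbolic piece by a cusped analogue of the presentation length bound, control each Seifert piece by its Seifert invariants, and constrain the gluing data on each JSJ torus using the presentation length bound together with the meridian identification coming from $b_1(G)=1$.

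The main obstacle is the satellite case. One must propagate the White-type presentation length bound through the JSJ decomposition---so that hyperbolic pieces with toroidal boundary and the gluings along JSJ tori are all controlled in terms of $T(G)$---and then verify that only finitely many such assemblies actually arise as the complement of a knot in $S^3$. Ensuring that the gluing maps on each JSJ torus admit only finitely many possibilities, once the geometric pieces have been bounded, is the technical heart of the proof.
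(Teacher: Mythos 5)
Your reduction to the finitely presented case and your use of presentation length as the controlling invariant match the paper's starting point, but there are two serious gaps in the plan, and the second of them is precisely the obstacle the paper is written to overcome.

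First, the claim that \emph{presentation length does not increase under quotients}, i.e.\ that $\ell(\pi_1(M))\le\ell(G)$, is false. If $G$ is a free group of rank $2$ then $\ell(G)=0$, yet $G$ surjects onto every $2$-generated group, including fundamental groups of $2$-bridge knots, whose presentation lengths are unbounded. What the paper actually proves (Theorem~\ref{volBound}) is that the \emph{simplicial volume} of $M$ is bounded by $\pi\ell(G)$, and this requires genuine work: one realizes the presentation complex by a map into $M$, shows the complementary regions are $\pi_1$-elementary (this uses $b_1(G)=1$ via a drilling argument, Proposition~\ref{elemCplment}), and applies an isoperimetric inequality. You cannot substitute the quotient argument for this step, and consequently your later steps that rest on $\ell(\pi_1(M))\le\ell(G)$ --- such as directly bounding the exponents $p,q$ of torus knots by the word lengths in a presentation of $\pi_1(M)$ --- do not go through as stated.

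Second, and more fundamentally, bounding the volume (or diameter) of the hyperbolic pieces is \emph{not} sufficient to finish the hyperbolic case, and the appeal to ``Thurston's finiteness of hyperbolic $3$-manifolds of bounded volume'' is exactly where this plan breaks. J{\o}rgensen--Thurston finiteness only says bounded-volume manifolds arise by Dehn filling finitely many cusped parents; there are infinitely many one-cusped hyperbolic manifolds (e.g.\ twist knot complements) with volume below any given bound. White's diameter bound applies to \emph{closed} hyperbolic manifolds and is not directly applicable to cusped knot complements, whose diameter is infinite, and it does not control volume anyway. The paper's central technical contribution --- Theorems~\ref{factor-hyp} and~\ref{factor-SF} --- is the replacement for this: if a hyperbolic piece of $M$ has a very short geodesic $\gamma$ (as must happen for all but finitely many members of an infinite volume-bounded family), then any $\phi:G\to\pi_1(M)$ factors through the \emph{extended drilling} $\pi_1(N^{e})$ along $\gamma$, with denominator bounded by $\ell(G)$. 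One then passes to the cover of $N^e$ determined by ${\rm Im}(\phi^e)$, takes an appropriate Scott core, and derives a contradiction from the Betti-number hypothesis $b_1(G)=1$. Nothing in your sketch plays the role of this factorization, and without it the hyperbolic and cable cases do not close. Your treatment of gluings via the ``meridian identification'' is also not how the paper handles the satellite case: the paper bounds gluing data (children longitudes) by a de-satellitation map $\alpha_n:M_n\to M_n'$ that Dehn-fills a companion and reduces the question to finiteness of the \emph{new} hyperbolic or Seifert pieces, which is already established.

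In summary, the high-level architecture (finitely presented reduction, presentation-length control, JSJ decomposition, bound pieces then bound gluings) is correct and matches the paper, but the proposal is missing the key idea that makes the volume bound useful: factorization of $\phi$ through extended Dehn fillings along short geodesics and sharp cone-fibers, with the Betti-number contradiction in the resulting cover.
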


As a corollary, we resolve Problem 1.12(D) from Kirby's problem list \cite{Kirby},
conjectured by Jonathan K. Simon in the 1970's. Recall a {\it knot group} is the fundamental group of the complement of a knot in $S^3$. 
\begin{cor} Every knot group maps onto at most finitely many knot groups.\end{cor}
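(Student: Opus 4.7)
The plan is to deduce this corollary directly from Theorem~\ref{main}. First I would verify that the source group satisfies the hypotheses of that theorem. If $G = \pi_1(S^3 \setminus K)$ is a knot group, then the Wirtinger presentation shows that $G$ is finitely generated (in fact finitely presented). Moreover, by Alexander duality, $H_1(S^3 \setminus K; \ZZ) \cong \ZZ$, so $b_1(G) = 1$. Hence the hypotheses of Theorem~\ref{main} are met.

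Next I would invoke Theorem~\ref{main} to obtain a finite collection of knot complements $M_1, \ldots, M_n$ such that every knot complement $M$ admitting an epimorphism $G \twoheadrightarrow \pi_1(M)$ appears in this list. Since each $M_i$ has a well-defined fundamental group, the set $\{\pi_1(M_1), \ldots, \pi_1(M_n)\}$ contains every knot group onto which $G$ surjects, possibly with repetitions if distinct complements in the list happen to share a fundamental group. In particular, the number of distinct such knot groups is at most $n$, proving the corollary.

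There is essentially no obstacle in this deduction: the entire substance of the result is carried by Theorem~\ref{main}, and the passage from knot complements to knot groups is an immediate a fortiori argument. The only mild subtlety worth flagging is that the bound obtained this way need not be sharp, since non-equivalent knots may occasionally share a knot group, but this is immaterial for the finiteness statement.
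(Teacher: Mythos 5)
Your deduction is correct and matches the paper's intent: the corollary is stated as an immediate consequence of Theorem~\ref{main}, using precisely the facts you cite (a knot group is finitely presented via the Wirtinger presentation, and $b_1 = 1$ by Alexander duality). Nothing more is needed.
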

Note that this also resolves Problem 1.12(C). Our techniques say nothing about Part (B), and Part (A) is known to be false. 

There has been a fair amount of work recently on Simon's conjecture, which partly motivated our work. Daniel S.
Silver and Wilbur Whitten proved that a fibered knot group maps onto at most finitely many knot groups (\cite[the
comment after Conjecture 3.9]{SilverWhitten06}, see also \cite{Leininger09}).
Silver and Whitten also considered a restricted partial order on knot complements where the epimorphism must preserve
peripheral structure, and therefore has a well-defined degree, (\cite{SilverWhitten05, SilverWhitten08}). 
Restricting the class of epimorphisms to ones of 
non-zero degree, finiteness was shown by Michel Boileau, Hyam Rubinstein and Shicheng Wang \cite{BRW}.
Recently it was proven that a $2$-bridge knot group maps onto at most finitely 
many knot groups by Boileau et al. \cite{BBRW}. There
was also some experimental evidence for epimorphsims between prime  knot groups of $\leq11$ crossings by Teruaki Kitano, Masaaki Suzuki, et al. (\cite{HKMS, KitanoSuzuki08}). 
There are also families of examples of epimorphisms between 2-bridge knot groups, cf. \cite{ORS08, LS, HosteShanahan10}.

Now we give some remarks on the proof of Theorem \ref{main}. The finitely generated case is reduced to the finitely
presented case following a suggestion of Jack Button, so we assume that $G$ is finitely presented.  
Given the proof of Simon's conjecture for maps of non-zero degree \cite{BRW}, we need to allow maps of zero degree. 
In this case, and allowing finitely presented groups $G$ with $b_1(G)=1$, 
techniques such as simplicial volume give no information. Nevertheless, we can show that there is a bound on
the simplicial volume of the image in terms of the \emph{\hyperlink{term-prLen}{presentation length}} of $G$, defined by Daryl Cooper in \cite{Co} and used to bound the volume of a
hyperbolic manifold (Section \ref{Sec-volume}). The \hyperlink{term-prLen}{presentation length} gives a coarse substitute for simplicial volume. Suppose we want to prove Theorem \ref{main}
for epimorphisms to hyperbolic knot groups. Then bounding the volume does not give a finiteness result, since there can
be infinitely many hyperbolic knot complements of bounded volume, such as the twist knots, which are obtained by 
Dehn filling on the Whitehead link. A twist knot $k$ with a large number of twists will have a very short geodesic in the hyperbolic metric on its complement, 
which contains a very large tubular neighborhood by the Margulis Lemma. What we would like to do is
to factor the epimorphism $G \twoheadrightarrow G_k$ through the fundamental group of the Whitehead link complement obtained by drilling the
short geodesic from the twist knot complement. 
Intuitively, we expect that a \hyperlink{term-prLen}{presentation complex} for $G$ mapped into $S^3-k$ should be possible to homotope off
of a deep Margulis tube, since the complex should have bounded area, whereas the meridian disk of the Margulis tube should have large area. 
If we could do this, then we obtain a contradiction, since the only finitely generated covers of the Whithead link complement with $b_1=1$ are
elementary covers, and the homomorphism factoring through such a cover could not map onto $G_k$. 
However, this factorization cannot be done in general, cf. Example \ref{knotExample}. The substitute for this is to factor through the \emph{\hyperlink{term-extDrilling}{extended drilling}}, which has
enough good properties, such as coherence of the fundamental group (cf. Subsection \ref{Subsec-Scott}), 
that we may still obtain a contradiction (Section \ref{Sec-factor}). The general case of non-hyperbolic knots is similar,
but requires some modifications involving the JSJ decomposition of the knot complement and some case-by-case analysis (Sections \ref{Sec-cpnship}, \ref{Sec-number}, \ref{Subsec-SFPieces}, \ref{Sec-clTypes}). 
The technique for this factorization is based on a result of Matthew E. White (\cite{Wh}),
which bounds the diameter of a closed hyperbolic $3$-manifold in terms of the length of the presentation of its fundamental group. We improve upon White's result in Section \ref{Sec-diamBound} (to understand the improvement of White's result Theorem \ref{diamBound}(2), you need only read Section \ref{Subsec-DehnExt} and Section \ref{Subsec-factorHyp} ). 

{\bf Acknowledgement:} We thank Daniel Groves for helpful conversations. We thank  Jack Button, Hongbin Sun, and Shicheng Wang for comments on 
a preliminary draft, and the referee for their comments. 

\section{Dehn extensions}\label{Sec-eDehn}
In this section and the next, we study factorizations of maps through 
\hyperlink{term-extDehnFilling}{extended} \hyperlink{term-extDehnFilling}{Dehn} \hyperlink{term-extDehnFilling}{fillings}. 
This is motivated by the na\"ive question: suppose $f_n:K\to M_n$ is a sequence of maps from a finite $2$-complex to  orientable aspherical compact $3$-manifolds $M_n$, 
which are obtained by Dehn filling on a $3$-manifold $N$ along a sequence of slopes on a torus boundary component of $N$, 
does $f_n$ factorize as $i_n\circ f'$ up to homotopy for some $f':K\to N$ for infinitely many $n$, 
where $i_n:N\to M_n$ is the Dehn filling inclusion? The answer is negative as it stands, (cf. Subsection \ref{Subsec-example}), 
but a modified version will be true in certain natural situations if we allow `\hyperlink{term-extDehnFilling}{extended Dehn fillings}' 
(cf. Theorems \ref{factor-hyp}, \ref{factor-SF}). 
We introduce and study \hyperlink{term-DehnExt}{Dehn extensions} in Subsections \ref{Subsec-DehnExt}, \ref{Subsec-Scott}.

\subsection{Examples}\label{Subsec-example}
We start with an elementary example. The phenomenon here essentially illustrates why 
we should expect the factorization only through the 
\hyperlink{term-extDehnFilling}{extended Dehn filling}, and will be used in the construction of Dehn extensions.

\begin{example}\label{elemExample} Let $P=\ZZ\alpha\oplus\ZZ\beta$ be a rank-$2$ free-abelian group generated by $\alpha,\beta$. We
identify $P$ as the integral lattice of $\RR^2$, where $\alpha=(1,0)$, $\beta=(0,1)$. Let
$\omega=a\,\alpha+b\,\beta\in P$ be primitive and $m>1$ be an integer. Define an extended lattice of $\RR^2$ 
by $\tilde{P}=P+\ZZ\,\frac{\omega}{m}$. Pick any primitive $\zeta\in P$ 
 such that $\omega+\zeta\in m\ZZ\oplus m\ZZ$. There are infinitely many such $\zeta$'s, 
 for example, $\zeta=(my-a)\,\alpha+(-mx-b)\,\beta$ with integers pairs $(x,y)$ such that $ax+by=1$. 
 Note there are infinitely many such pairs since the general solution to the linear Diophantine equation $ax+by=1$ is $(x,y)=(x^*+bt,y^*-at)$, 
 where $t\in\ZZ$ and $(x^*,y^*)$ is a particular solution. 
 $\zeta$ is primitive because $(-x)(my-a)+(-y)(-mx-b)=1$. Thus there is a well-defined epimorphism, $$\phi_\zeta:\tilde{P}\to P\,/\,\ZZ\zeta,$$
by requiring $\phi_\zeta|_P$ to be modulo $\ZZ\zeta$, and $\phi_\zeta(\frac{\omega}{m})=\frac{\omega+\zeta}{m}+\ZZ\zeta$.
\end{example}

We extend this construction to give a counterexample to the question in the category of $3$-manifolds.

\begin{example}\label{knotExample} Let $k_{p/q}$ be the $p/q$-cable knot ($p,q$ coprime and $q>1$) on a hyperbolic knot $k\subset S^3$. Let $N=S^3-k_{p/q}$, $M=S^3-k$. Identify $P=\pi_1(\partial M)=\ZZ\mu\oplus\ZZ\lambda$ as a peripheral subgroup of $\pi_1(M)$, where $\mu$, $\lambda$ are the meridian and the longitude. Let $w=\mu^p\lambda^q$, then $\pi_1(N)\cong \pi_1(M)\langle\sqrt[q]{w}\rangle$, by which we mean the group $\pi_1(M)*\langle u\rangle$ modulo $u^q=w$. We may as well write $\pi_1(N)\cong \pi_1(M)*_PQ$, where $Q=P\langle\sqrt[q]{w}\rangle$. Note 
the abelianization of $Q$ equals $\tilde{P}=P+\ZZ\,\frac{\omega}{q}$, where $\omega=p\,\mu+q\,\lambda$, if we identify $P=\ZZ\mu\oplus\ZZ\lambda$ as the integral lattice of $\QQ\oplus\QQ$. By Example \ref{elemExample}, there are infinitely many primitive $\zeta$'s such that $\omega+\zeta\in q\ZZ\oplus q\ZZ$, and there are epimorphisms $\bar\phi|:\tilde{P}\to P\,/\,\ZZ\zeta$, which extend as 
$\bar\phi:\pi_1(M)*_P\tilde{P}\to\pi_1(M_\zeta)$ in an obvious fashion, where $M_\zeta$ is the Dehn filling along $\zeta\in P$. Hence we obtain epimorphisms $\phi:\pi_1(N)\to\pi_1(M_\zeta)$ via the composition:
$$\pi_1(N)\cong\pi_1(M)*_PQ\to\pi_1(M)*_P\tilde{P}\to\pi_1(M_\zeta).$$
Now there are infinitely many slopes $\zeta\subset\partial M$, such that the corresponding $\phi:\pi_1(N)\to \pi_1(M_\zeta)$ are all surjective. It also is clear that $\phi$ can be realized by a map $$f:N\to M_\zeta,$$ by mapping the companion \hyperlink{term-piece}{piece} of $N$ to $M_\zeta$, and extending over the cable pattern \hyperlink{term-piece}{piece} of $N$.

However, $f$ does not always factor through the Dehn filling $i:M\to M_\zeta$ up to homotopy. 
Suppose $f\simeq i\circ h$ for some $h:N\to M$. 
Let $\kappa:\tilde{M}\to M$ be the covering corresponding to ${\rm Im}(h_\sharp)$, where $h_\sharp:\pi_1(N)\to\pi_1(M)$ after choosing some base points. Let $j:M\to N$ be the inclusion of $M$ as the companion \hyperlink{term-piece}{piece} of $N$, and let $\kappa':\tilde{M}'\to \tilde{M}$ be the covering corresponding to ${\rm Im}((h\circ j)_\sharp)$. 
The homotopy lift $\tilde{h}:M \to\tilde{M}$ is hence $\pi_1$-surjective, and the homotopy lift $\widetilde{h\circ j}: M\to \tilde{M}'$ is also $\pi_1$-surjective. We have the commutative diagram:
$$\begin{CD} M @>j>> N @.\\
@V \widetilde{h\circ j} V V @V \tilde{h} VV \\
\tilde{M}'@>\kappa' >> \tilde{M} @>\kappa>> M.
\end{CD}$$
Note $H_1(N)\cong H_1(M) \cong \ZZ$ and $\tilde{M}$ and $\tilde{M}'$ are hyperbolic with non-elementary fundamental group, since they map $\pi_1$-surjectively to $M_\zeta$ via the factorization. Clearly $H_1(\tilde{M})$ and $H_1(\tilde{M}')$ are isomorphic to $\ZZ$.
As we shall see in Theorem \ref{volBound}, the volume of $\tilde{M}$ is at most $\pi\ell(\pi_1(N))$, where $\ell(\pi_1(N))$ is the \hyperlink{term-prLen}{presentation length} of $\pi_1(N)$, and similarly the volume of $\tilde{M}'$ is at most $\pi\ell(\pi_1(M))$. Thus $\tilde{M}$ is a finite covering of $M$ of some degree $d\leq\pi\ell(\pi_1(M))/\Vol(M)$. Moreover, $\tilde{h}_*: H_1(N) \to H_1(\tilde{M})$ is an isomorphism as $\tilde{h}$ is $\pi_1$-surjective, and $\kappa_*:H_1(\tilde{M})\to H_1(M)$ is the multiplication by some factor $d_0$ of $d$. Similarly, $(\widetilde{h\circ j})_*: H_1(M) \to H_1(\tilde{M}')$ is an isomorphism, and $\kappa'_*:H_1(\tilde{M}')\to H_1(\tilde{M})$ is multiplication by a factor $q'$, which is the same factor as the multiplication $j_*: H_1(M)\to H_1(N)$. By the cabling construction of knots, the image of $j_*:H_1(M)\to H_1(N)\cong\ZZ$ is contained in $q\ZZ$, and thus we see that $q'=q$. Thus:
$$\begin{CD} H_1(M) @>\times q>> H_1(N) @.\\
@V \cong V V @V \cong VV \\
H_1(\tilde{M}')@>\times q>> H_1(\tilde{M}) @>\times d_0>> H_1(M).
\end{CD}$$
However, we have $q= \Vol(\tilde{M}')\,/\,\Vol(\tilde{M}) \leq \pi\ell(\pi_1(M))\,/\,\Vol(M)$. Therefore if
we take $q > \pi \ell(\pi_1(M))\,/\,\Vol(M)$, we obtain a contradiction. In other words, for such $q$'s, $f$ does not  factor through the Dehn filling $i:M\to M_\zeta$ up to homotopy. 

On the other hand, as evidence for Theorem \ref{factor-hyp}, clearly every $\phi:\pi_1(N)\to\pi_1(M_\zeta)$ as above factors through the `extended' Dehn filling epimorphisms $\iota^e:\pi_1(M)*_P(P+\ZZ\frac{\zeta}{q})\to \pi_1(M_\zeta)$, since $P+\ZZ\frac{\zeta}{q}=\tilde{P}$.\end{example}

\subsection{Dehn extensions of filling and drilling}\label{Subsec-DehnExt}
In this subsection, we introduce the notion of \hyperlink{term-DehnExt}{Dehn extensions}.

Let $N$ be an aspherical orientable compact $3$-manifold, and $\zeta$ be a slope on an incompressible torus boundary component $T\subset\partial N$. By choosing a base-point of $N$ and a path to $T$, we may identify $P=\pi_1(T)$ as a peripheral subgroup of $\pi_1(N)$. By choosing an orientation of $\zeta$, we identify $\zeta$ as primitive element in $P$. On the other hand, by choosing a basis for $\pi_1(T)$, we may also identify $P\cong\ZZ\oplus\ZZ$ as the integral lattice in $\QQ\oplus\QQ$. 

\begin{definition} \raisebox{\baselineskip}[0pt]{\hypertarget{term-DehnExt}}For any integer $m>1$, we define the \emph{Dehn extension} of $\pi_1(N)$ 
along $\zeta$ with denominator $m$ as the amalgamated product: 
$$\pi_1(N)^{e(\zeta,m)}=\pi_1(N)*_P \left(P+\ZZ\,\frac{\zeta}{m}\right).$$
We often simply abbreviate this $\pi_1(N)^e$ when $m$ and $\zeta$ are clear from the context. There is a natural \emph{extended Dehn filling epimorphism}: $$\iota^e:\pi_1(N)^e\to\pi_1(N_\zeta),$$ defined by quotienting out the normal closure of $\ZZ\,\frac{\zeta}{m}$, where $N_\zeta$ is the Dehn filling of $N$ along $\zeta$.
We also regard $\pi_1(N)$ as a \emph{trivial} Dehn extension of itself with denominator $m=1$, in which case we do not require $N$ to have any incompressible torus boundary component or non-empty boundary.\end{definition}

From a topological point of view, the \hyperlink{term-DehnExt}{Dehn extension} $\pi_1(N)^e$ may be regarded as the fundamental group of a topological space $N^e=N^{e(\zeta,m)}$, which will be called the \emph{Dehn extension} of $N$ along $\zeta$ with denominator $m$. Certainly $N^e$ could be $N$ itself for the trivial \hyperlink{term-DehnExt}{Dehn extension}. If $m>1$, $N^e$ is obtained from $N$ by gluing $T\subset \partial N$ to the source torus of a mapping cylinder, denoted as $Z$, of the $m$-fold covering map between tori: $$\RR^2/P\to \RR^2/(P+\ZZ\frac{\zeta}{m}).$$
To visualize this, take the product of the unit interval $I=[0,1]$ with an $m$-pod (i.e. a cone over $m$ points), identify the $0$-slice with the $1$-slice via a primitive cyclic permutation of the legs, and denote the resulting space as $\Psi=\Psi(m)$. \raisebox{\baselineskip}[0pt]{\hypertarget{term-ridge}}The `side' of $\Psi$ is a loop which is homotopic to $m$ wraps along of the `ridge' loop of $\Psi$. The mapping cylinder $Z=Z(m)$ is homeomorphic to $S^1\times \Psi$, and we will refer the product of $S^1$ with the side loop (resp. ridge loop) as the \emph{side torus} (resp. \emph{ridge torus}). Glue the side torus of $Z$ to the boundary $T$ via a homeomorphism, such that the side-loop of $Z$ is identified with $\zeta$, (clearly this determines the resulting space up to homeomorphism), cf. Figure \ref{figDehnExt}. We call $Z$ the \emph{ridge piece}, and the copy of $N$ under the inclusion the \emph{regular part}.

\begin{figure}[htb]
\centering
\psfrag{Y}{\scriptsize{$N$}}
\psfrag{g}{\scriptsize{$\zeta$}}
\psfrag{a}[][]{\scriptsize{\shortstack{glue up with a\\ rotation of $\frac{2\pi}{m}$}}}
\psfrag{s}{\scriptsize{the side-loop}}
\psfrag{x}{\scriptsize{$\times$}}
\psfrag{P}{\scriptsize{$\Psi$}}
\psfrag{Z}{\scriptsize{$Z$}}
\psfrag{S}{\scriptsize{$S^1$}}
\psfrag{B}{\scriptsize{$T$}}
\psfrag{i}[][]{\scriptsize{identify $\partial$'s}}
\includegraphics[scale=.7]{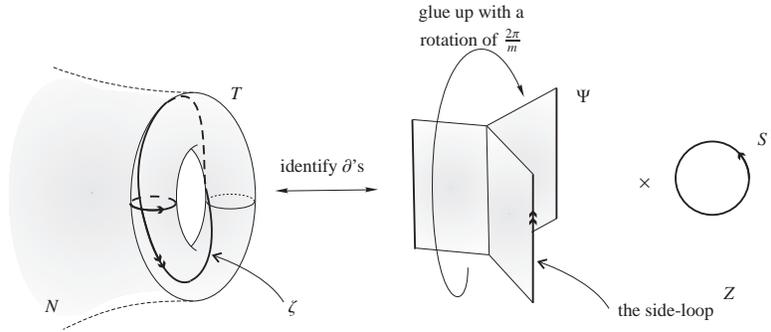}
\caption{The  Dehn extension $N^{e}$ along $\zeta$, illustrated with $m=3$.}\label{figDehnExt}
\end{figure}

The \hyperlink{term-DehnExt}{Dehn extension} $N^e$ may be \textit{ad hoc} characterized as a `ridged manifold'.
Despite the mild singularity near the ridges, $N^e$ behaves like an aspherical compact $3$-manifold in many ways. For instance, $N^e$ is an Eilenberg-MacLane space of $\pi_1(N)^e$, and $H_*(N^e,\QQ)\cong H_*(N,\QQ)$.
The \hyperlink{term-DehnExt}{Dehn extension} $N^e$ also admits an analogous JSJ decomposition. \raisebox{\baselineskip}[0pt]{\hypertarget{term-piece}}Recall that the classical Jaco-Shalen-Johanson (JSJ) decomposition says any orientable irreducible compact $3$-manifold can be cut along a minimal finite collection of essential tori into atoroidal and Seifert fibered {\it pieces}, canonical up to isotopy. The analogous JSJ decomposition of $N^e$ consists of the JSJ pieces of $N$ together with a \hyperlink{term-ridge}{ridge piece} $Z$
if $m>1$. These pieces are glued along $\pi_1$-injective tori. This is a graph-of-spaces decomposition, associating to $N^e$ a finite connected graph, called the \emph{JSJ graph}, whose vertices correspond to the JSJ pieces and edges to the JSJ tori.
Moreover, $\pi_1(N^e)=\pi_1(N)^e$ is coherent; indeed, any covering of $N^e$ with finitely
generated fundamental group has a Scott core, as we shall see in Subsection \ref{Subsec-Scott}.

There is also a natural map, called the \raisebox{\baselineskip}[0pt]{\hypertarget{term-extDehnFilling}}\emph{extended Dehn filling} of $N^e$ along $\zeta$, 
$$i^e:N^e\to M,$$ defined by collapsing $Z\cong S^1\times \Psi$ to $S^1$. $i^e$ induces
$\iota^e:\pi_1(N)^e\cong\pi_1(N^e)\to\pi_1(N_\zeta)$.

To an essential drilling, we may naturally associate a \hyperlink{term-DehnExt}{Dehn extension}. Specifically, let $M$ be an aspherical orientable compact $3$-manifold, and $\gamma\subset M$ be an essential simple closed curve, i.e. which is not null-homotopic. Then $N=M-\gamma$ is an aspherical orientable compact $3$-manifold, and the boundary component $\partial_\gamma N$ coming from the drilling is an incompressible torus.There is a canonical slope $\zeta\subset\partial_\gamma N$ so that $M=N_\zeta$.
\raisebox{\baselineskip}[0pt]{\hypertarget{term-extDrilling}} For any integer $m>0$, we shall call the \hyperlink{term-DehnExt}{Dehn extension} $N^e(\zeta, m)$ the \emph{extended drilling} of $M$ along $\gamma$ with denominator $m$.

We shall be most interested in \emph{geometric drillings}, namely, when $\gamma$ is a 
simple closed geodesic in the interior of some geometric \hyperlink{term-piece}{piece} of $M$. Factorization of 
maps through extended geometric drillings will be studied in
Section \ref{Sec-factor}.

\subsection{Coherence and the Scott core} \label{Subsec-Scott}

With notation from Subsection \ref{Subsec-DehnExt}, in this subsection, we show 
every \hyperlink{term-DehnExt}{Dehn extension} $N^e$ is \emph{coherent}, in the 
sense that every finitely generated subgroup of its fundamental group is finitely 
presented. This follows from the existence of a Scott core for any connected finite 
generated covering space $\tilde{N}^e$ of $N$ proved in Proposition \ref{ScottCore} below. 
The results in this section are preparatory for the arguments in Section \ref{Sec-homeoTypes}.
Recall for a topological space $X$, a \emph{Scott core} of $X$ is a connected compact 
subspace $C\subset\tilde{N}^e$, if any, whose inclusion induces a $\pi_1$-isomorphism. 
It is named after Peter Scott who first found such cores for  $3$-manifolds with finitely generated fundamental group (\cite{Sc}) . 
First, we need an auxiliary result. For background on combinatorial group theory, cf. \cite{Cohen}. 

\begin{lemma} \label{amalgamation}
Let $G=A\ast_C B$ or $G= A\ast_C$ be a free product with amalgamation or HNN extension. If $G$ and $C$ are finitely generated, then $A$ and $B$ are too. 
\end{lemma}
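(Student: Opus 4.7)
The plan is to use the normal form theorem for amalgamated products (respectively Britton's Lemma for HNN extensions). I outline the amalgamated product case $G = A\ast_C B$; the HNN case is entirely analogous.

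Let $g_1,\ldots,g_n$ generate $G$ and let $c_1,\ldots,c_r$ generate $C$. Each $g_i$ can be written as a word in $A\cup B$; collect the $A$-letters so obtained into a finite set $S_A\subset A$ and the $B$-letters into a finite set $S_B\subset B$. Set
\[H := \langle S_A,\,c_1,\ldots,c_r\rangle \leq A,\qquad K := \langle S_B,\,c_1,\ldots,c_r\rangle \leq B.\]
Both contain $C$, so the amalgamated product $H\ast_C K$ is defined, and the inclusions induce a homomorphism
\[\Phi\colon H\ast_C K \longrightarrow A\ast_C B = G.\]

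The two key observations are: (i) $\Phi$ is injective, because a reduced alternating word in $H\ast_C K$ (letters alternating between $H\setminus C$ and $K\setminus C$) maps to a reduced alternating word of the same length in $A\ast_C B$, which is non-trivial by the normal form theorem; and (ii) $\Phi$ is surjective, since its image contains every $g_i$ by construction. Hence $\Phi$ is an isomorphism. To deduce that $A = H$, take any $a\in A$ and let $x\in H\ast_C K$ be its $\Phi$-preimage, in reduced form. Since $H\leq A$, $K\leq B$, and $A\cap B = C$ inside $G$, applying the normal form theorem to the image of $x$ forces the reduced length of $x$ to be at most $1$, and any single letter is forced into $H$ (a letter in $K$ would land in $A\cap B = C \leq H$). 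Therefore $a\in H$, so $A = H$ is finitely generated by $S_A\cup\{c_1,\ldots,c_r\}$; likewise $B = K$.

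For the HNN case $G = A\ast_C$ with stable letter $t$ and associated embeddings $C,\phi(C)\hookrightarrow A$, write each $g_i$ in Britton-reduced form $a_0 t^{\epsilon_1}a_1\cdots t^{\epsilon_\ell}a_\ell$, collect the $A$-letters into a finite set $S_A$, and put $H = \langle S_A,c_1,\ldots,c_r,\phi(c_1),\ldots,\phi(c_r)\rangle \leq A$ (which is finitely generated since $C$ is). Britton's Lemma supplies the injectivity of the induced map $H\ast_C \hookrightarrow G$, and the same normal-form argument yields $A = H$.

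The main technical point is simply maintaining bookkeeping on normal forms: one must check that a reduced word in $H\ast_C K$ remains reduced when pushed into $A\ast_C B$, i.e.\ that coset representatives of $C$ in $H$ (resp.\ $K$) can be chosen as coset representatives of $C$ in $A$ (resp.\ $B$). Once this is verified, the rest is a formal application of uniqueness of normal forms.
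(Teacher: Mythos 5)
Your proof is correct and follows essentially the same approach as the paper: decompose the generators of $G$ into alternating $A$- and $B$-letters, form the subgroups $H\leq A$ and $K\leq B$ generated by those letters together with $C$, observe that $H\ast_C K\to G$ is injective (the paper cites a subgroup theorem, you invoke the normal form theorem directly, which is the same fact) and surjective, and then use the uniqueness of normal forms to conclude $A=H$ and $B=K$. The only difference is that you spell out the final deduction $A=H$ and the HNN case in more detail than the paper, which simply asserts $A'=A$, $B'=B$ and says the HNN case is similar.
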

\begin{proof}
We give the argument for the amalgamated product, the HNN case being similar. 
Let $g_1,\ldots, g_n$ be generators for $G$, and $c_1,\ldots,c_m$ be generators for $C$. 
We may write each element $g_i = a_{i,1} b_{i,1} \cdots a_{i,k(i)} b_{i,k(i)}$, where
$a_{i,j}\in A, b_{i,j}\in B$, $i=1,\ldots, n$. Let $A' = \langle a_{i,j}, c_l  \rangle, B' =\langle a_{i,j}, c_l\rangle$, so 
that $C< A', C<B'$. 
Then we have $G'= A'\ast_C B' \to \langle A',B'\rangle \leq   A\ast_C B = G$ injects by \cite[Proposition 28]{Cohen}. But clearly also $G\leq G'$,
so $G=G', A'=A, B'=B$, and we see that $A$ and $B$ are finitely generated. 
\end{proof}

\begin{cor} \label{coherent}
If $G$ is a finitely generated group which is a finite graph of groups with finitely generated edge groups,
then the vertex groups are also finitely generated. 
\end{cor}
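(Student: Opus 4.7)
The plan is to induct on the number of edges of the underlying graph, peeling them off one at a time and invoking Lemma \ref{amalgamation} at each step.

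First, I would fix a spanning tree $T$ of the underlying graph and order the non-tree edges $e_1,\ldots,e_k$. The standard presentation of the fundamental group of a graph of groups writes $G$ as an iterated HNN extension of the fundamental group $G_T$ of the tree-of-groups over $T$, the $i$-th extension having associated subgroup (a conjugate of) the finitely generated edge group $C_{e_i}$. Applying the HNN case of Lemma \ref{amalgamation} successively from the outermost extension inward shows that at each stage the inner group is finitely generated, and in particular $G_T$ is finitely generated.

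Next I would reduce to the single-vertex case by a second induction on the number of edges of $T$. If $T$ has no edges, then $G_T$ \emph{is} the unique vertex group and there is nothing to prove. Otherwise pick a leaf edge $e$ with leaf vertex $v$, and let $T'$ be the subtree obtained by removing $e$ and $v$. The graph-of-groups decomposition gives $G_T = G_{T'} \ast_{C_e} G_v$, and the amalgamated product case of Lemma \ref{amalgamation} yields that both $G_{T'}$ and $G_v$ are finitely generated. Since $G_{T'}$ is the fundamental group of a strictly smaller tree of groups with the same (finitely generated) edge groups, induction gives finite generation of the remaining vertex groups.

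There is really no obstacle here; the only bookkeeping point is to verify that in the HNN step the $i$-th associated subgroup is indeed a conjugate of $C_{e_i}$, so that Lemma \ref{amalgamation} applies literally. This is immediate from the Bass--Serre normal form, so the argument is purely a repeated application of the previous lemma.
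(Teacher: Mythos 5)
Your proof is correct and is essentially the same argument the paper intends: the paper's proof is the single line ``use Lemma \ref{amalgamation} together with induction on the number of vertices of the graph of groups,'' and your two-stage induction (first peeling off HNN extensions over non-tree edges, then peeling off leaf vertices of the spanning tree) is a careful unwinding of precisely that.
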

\begin{proof}
Use Lemma \ref{amalgamation} together with induction on the number of vertices of the graph of groups. 
\end{proof}

\begin{prop}\label{ScottCore} Let $N$ be an orientable aspherical compact $3$-manifold, and $N^e$ be the \hyperlink{term-DehnExt}{Dehn extension}
of $N$ with denominator $m>0$ along some slope on an incompressible torus boundary component. 
Then any  connected covering space $\kappa:\tilde{N}^e\to N^e$ with finitely generated fundamental 
group has an aspherical Scott core $C\subset\tilde{N}^e$. Furthermore,  for any component of the preimage 
of a JSJ torus of $N^e$, $C$ either meets it in a Scott core of the component or misses it. For 
 any component of the preimage of a JSJ \hyperlink{term-piece}{piece} of $N^e$, 
$C$ either meets it in an aspherical Scott core of the component or misses it.
\end{prop}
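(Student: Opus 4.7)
The strategy is to exploit the graph-of-spaces decomposition of $N^e$ given in Subsection~\ref{Subsec-DehnExt}: vertex spaces are the JSJ pieces of $N$ together with the ridge piece $Z$ (if $m>1$), and edge spaces are the $\pi_1$-injective JSJ tori. I would first pull this decomposition back to $\tilde{N}^e$. Let $T$ denote the associated Bass-Serre tree, acted on by $\pi_1(N^e)$, and hence by restriction by the finitely generated subgroup $H = \pi_1(\tilde{N}^e)$. Since $H$ is finitely generated, a standard Bass-Serre fact gives a unique minimal $H$-invariant subtree $T_H \subseteq T$ (or a fixed vertex), with $H\backslash T_H$ a finite graph. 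This yields a finite graph-of-groups decomposition of $H$ whose edge groups are subgroups of $\pi_1(T^2) \cong \ZZ^2$, hence finitely generated, so by Corollary~\ref{coherent} every vertex group is finitely generated. Topologically, this singles out finitely many ``relevant'' components of the preimages of JSJ pieces and tori --- the ones the core $C$ will meet.

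Next, I would produce Scott cores on each relevant piece. For a relevant component $\tilde{P}_v$ of the preimage of a JSJ piece of $N$, the cover is an irreducible aspherical $3$-manifold with finitely generated fundamental group, so Scott's theorem \cite{Sc} gives a compact Scott core; irreducibility together with asphericity and infinite $\pi_1$ forces the core to be aspherical (its universal cover is simply connected and irreducible, hence contractible). For covers of the ridge piece $Z$, since $\pi_1(Z) \cong \ZZ^2$, each cover is either a finite cover of $Z$, a cover of $S^1$-homotopy type, or simply connected, and a compact aspherical core is evident in each case. On a relevant torus preimage component $\tilde{T}_e$, a Scott core is a torus, annulus, or point according to the cover type, and is unique up to isotopy.

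Finally, I would glue. After fixing Scott cores on each $\tilde{T}_e$, I would choose each vertex core $C_v$ to meet the adjacent $\tilde{T}_e$'s precisely in those prescribed boundary cores, then assemble $C$ by gluing the $C_v$'s along the $\tilde{T}_e$ cores. The result is a compact aspherical subspace of $\tilde{N}^e$, and van Kampen's theorem applied along the graph-of-groups structure identifies $\pi_1(C)$ with $H = \pi_1(\tilde{N}^e)$, with the inclusion inducing that isomorphism. By construction, $C$ meets each preimage torus or piece component either in the fixed Scott core or not at all, giving the last two conclusions of the proposition.

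The main obstacle is the compatibility step: ensuring that each vertex Scott core can be chosen to restrict to the prescribed boundary cores on its shared tori. This requires the standard relative form of Scott's core theorem, which allows prescribing the restriction of the core to any incompressible boundary component (up to isotopy rel boundary). The ridge piece $Z$ must be handled by a separate but essentially trivial argument, since it is not a $3$-manifold but is homotopy equivalent to $T^2$ and has very simple covers.
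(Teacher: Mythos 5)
Your proposal follows essentially the same route as the paper's proof: identify a finite subfamily of components of preimages of JSJ pieces and tori carrying $\pi_1(\tilde{N}^e)$, use Corollary~\ref{coherent} to conclude the vertex groups are finitely generated, invoke the relative Scott core theorem (the paper cites McCullough, \cite[Theorem~2]{McC86}, for exactly this) to choose compatible cores piece by piece, handle the ridge piece $Z$ by elementary means, and glue. The only presentational difference is that the paper picks a finite bouquet $f:L\to\tilde{N}^e$ carrying $\pi_1$ and takes the components meeting $f(L)$, whereas you invoke the minimal $H$-invariant Bass--Serre subtree; these locate the same finite collection of relevant components.

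One step you gloss over is the asphericity of the core, and your parenthetical justification does not hold up. A Scott core $C_v$ of an aspherical irreducible $3$-manifold $\tilde{P}_v$ is \emph{not} automatically aspherical: $C_v$ is a compact submanifold with boundary, and it may have spherical boundary components bounding complementary balls in $\tilde{P}_v$, so $C_v$ itself can be reducible (e.g.\ a ball with smaller balls removed). The claim that the universal cover of $C_v$ is irreducible is unjustified (irreducibility of $\tilde{P}_v$ does not pass to submanifolds), and even if $\pi_1(C_v)$ were infinite, a simply connected noncompact $3$-manifold with boundary need not be contractible. The paper handles this explicitly: after applying the relative core theorem, one enlarges the core by absorbing the bounded complementary components whose inclusion into $\tilde{P}_v$ is $\pi_1$-trivial, which removes the interior spheres and yields an aspherical core. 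You should add this modification; with it, the remainder of your argument (matching boundary cores on the shared tori, then gluing and applying van Kampen along the induced graph-of-groups) goes through.
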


\begin{remark} The construction here includes the case when $N^e$ is the 
trivial \hyperlink{term-DehnExt}{Dehn} \hyperlink{term-DehnExt}{extension}, and so will Lemmas \ref{trivialCk} and \ref{simplifyCore}.\end{remark}

\begin{proof}
Let $\kappa:\tilde{N}^e\to N^e$ be the covering map, and let $\tori$ be the union of the JSJ tori of $N^e$. 
Pick a finite bouquet of circles $L=S^1\vee\cdots\vee S^1$ mapping $\pi_1$-surjectively into $\tilde{N}^e$, 
say $f:L\to\tilde{N}^e$. There are only finitely many components of the preimage of JSJ tori meeting $f(L)$, 
and there are only finitely many components of the preimage of \hyperlink{term-piece}{pieces} meeting $f(L)$.

Let $T\subset\tori$ be a JSJ torus. For any component $\tilde{T}$ of $\kappa^{-1}(T)$ meeting $f(L)$, 
clearly $\tilde{T}$ is either a torus, or a cylinder, or a plane, so we may take the torus, or an essential annulus, 
or a disk in $\tilde{T}$ containing $\tilde{T}\cap f(L)$, respectively, and denote as $C_{\tilde T}$. 
Let $C_T$ be the union of $C_{\tilde{T}}$ for all such $\tilde{T}$'s, and let $C_{\tori}$ be the union of all $C_T$'s.

For a regular \hyperlink{term-piece}{piece} $J$, first observe any component $\tilde{J}$ of $\kappa^{-1}(J)$ has finitely generated fundamental group. 
To see this, we may assume $\tilde{J}$ meets $f(L)$. Then there are at most finitely many components of $\tilde{N}^e-\tilde{J}$ 
which meet $f(L)$, say $U_1,\cdots,U_t$, and $\tilde{J}\cup U_1\cup\cdots\cup U_t\subset \tilde{N}^e$ is 
$\pi_1$-surjective as it contains $f(L)$. Clearly it is also $\pi_1$-injective, so $\pi_1(\tilde{N}^e)\cong\pi_1(\tilde{J}\cup U_1\cup\cdots\cup U_t)$. 
Note $\tilde{J}$ and $U_i$'s are glued along $\pi_1$-injective coverings of a JSJ torus, namely planes, cylinders or tori, 
so we have a graph-of-groups decomposition of $\pi_1(\tilde{N}^e)$, 
whose vertex groups are $\pi_1(\tilde{J}),\pi_1(U_1),\cdots,\pi_1(U_t)$ and whose edge groups are finitely generated. 
By Corollary \ref{coherent}, it follows the vertex groups must all be finitely generated as $\pi_1(\tilde{N}^e)$ is finitely generated. 
In particular, $\pi_1(\tilde{J})$ is finitely generated.

Thus, as $\tilde{J}\cap C_{\tori} \subset \partial\tilde{J}$ is a (possibly empty) compact sub-manifold, 
by a theorem of Darryl McCullough \cite[Theorem 2]{McC86}, there is a Scott core of $\tilde{J}$ which meets 
$\partial\tilde{J}$ exactly in $\tilde{J}\cap C_{\tori}$. Moreover, since $\tilde{J}$ is aspherical, 
we may also make the core aspherical by adding to the core bounded complementary components 
whose inclusion into $\tilde{J}$ is $\pi_1$-trivial. The result is denoted as $C_{\tilde{J}}$. 
Let $C_J$ be the union of all such $\tilde{J}$'s.

Let $Z$ be the ridge \hyperlink{term-piece}{piece} of $N^e$.
\raisebox{\baselineskip}[0pt]{\hypertarget{term-ridgeCovering}} For each component $\tilde{Z}$ of $\kappa^{-1}(Z)$ meeting $f(L)$, 
the preimage of the \hyperlink{term-ridge}{ridge torus} is either a torus, or a cylinder, or a plane, 
so we may take the torus, or an essential annulus, or a disk in the image, respectively, 
and thicken it up to a regular neighborhood in $\tilde{Z}$ which meets the preimage of the 
\hyperlink{term-ridge}{side torus} exactly in $\tilde{Z}\cap C_{\tori}$. 
This is a Scott core $C_{\tilde{Z}}$ of $\tilde{Z}$. Let $C_Z$ be the union of all such $\tilde{Z}$'s.

Let $C$ be the union of all the $C_J$'s and $C_Z$ constructed above. Then $C$ is an aspherical Scott core of $\tilde{N}^e$ as required.
\end{proof}

\raisebox{\baselineskip}[0pt]{\hypertarget{term-chunk}}In this paper, we often refer to any connected union $Q$ of a few components from $C_Z$ and $C_J$'s as a \emph{chunk} of $C$, following the notation in the proof of Proposition \ref{ScottCore}. It is a \emph{ridge} chunk (resp. a \emph{hyperbolic} chunk, or a \emph{Seifert fibered} chunk) if it is a single component from $C_Z$ (resp. some $C_J$ where $J$ is a hyperbolic \hyperlink{term-piece}{piece}, or a Seifert fibered \hyperlink{term-piece}{piece}). It is a \emph{regular} chunk if it is contained in $C-C_Z$. Note if the denominator of the \hyperlink{term-DehnExt}{Dehn extension} $N^e$ is $2$, there could be some ridge chunk which is a manifold, homeomorphic to either $I\times D^2$, or $S^1\times A$, or $I\times A$, or $S^1\times R$, or $I\times R$, where $A$ is an annulus and $R$ is a M\"obius strip, but we do not regard such as a regular chunk. In particular, regular chunks are all orientable.
The \raisebox{\baselineskip}[0pt]{\hypertarget{term-cutBdry}} \emph{cut boundary} $\partial_{\tori}Q$ of a chunk $Q$ is the union of the components of $C_{\tori}$ which are contained in the frontier of $Q$ in $\tilde{N}^e$. For example, if $Q$ is regular, $\partial_{\tori}Q$ is a (possibly disconnected, with boundary) essential compact subsurface of $\partial Q$.

The preimage of the JSJ tori $\tori$ of $N^e$ cuts $C$ into minimal \hyperlink{term-chunk}{chunks} along $C_\tori=C\cap\kappa^{-1}(\tori)$. This induces a graph-of-spaces decomposition of $C$ over a finite connected simplical graph $\Lambda$. The vertices of $\Lambda$ correspond to the components of $C-C_\tori$, and whenever two components are adjacent to each other, there are edges joining them, each corresponding to a distinct component of $C_\tori$ along which they are adjacent. Thus a \hyperlink{term-chunk}{chunk} $Q$ may be regarded as the subspace of $C$ associated to a connected complete subgraph of $\Lambda$. The graph-of-spaces decomposition also induces a graph-of-groups decomposition of $\pi_1(C)$ along free abelian edge groups of rank at most $2$.

In the rest of this subsection, we discuss how to get rid of unnecessary \hyperlink{term-chunk}{chunks} when $b_1(\tilde{N}^e)=1$.

\begin{lemma}\label{trivialCk} A \hyperlink{term-chunk}{chunk} $Q\subset C$ is contractible if and only if $b_1(Q)=0$.\end{lemma}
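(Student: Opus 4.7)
My plan is to reduce contractibility of $Q$ to triviality of $\pi_1(Q)$, then to induct on the number of vertex \hyperlink{term-chunk}{chunks} using Mayer--Vietoris and $3$-manifold topology. The forward direction is immediate, so I focus on the converse: assume $b_1(Q;\QQ)=0$. First I would observe that $Q$ is aspherical: each vertex \hyperlink{term-chunk}{chunk} $C_{\tilde J}$ or $C_{\tilde Z}$ is aspherical by Proposition \ref{ScottCore}, each edge piece $C_{\tilde T}$ is a torus, annulus, or disk, and all edge inclusions are $\pi_1$-injective. Hence $Q$ is a $K(\pi_1(Q),1)$, so it suffices to prove $\pi_1(Q)=1$.

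The Mayer--Vietoris sequence for the graph-of-spaces decomposition of $Q$ gives
\begin{align*}
b_1(Q;\QQ)=b_1(|\Lambda_Q|;\QQ)+\dim\operatorname{coker}\Bigl(\bigoplus_{e} H_1(C_{\tilde T_e};\QQ)\to\bigoplus_{v} H_1(Q_v;\QQ)\Bigr),
\end{align*}
so $b_1(Q)=0$ forces $\Lambda_Q$ to be a tree and the edge-to-vertex boundary map to be surjective.

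I would then induct on the number of vertex \hyperlink{term-chunk}{chunks}. In the base case of a single \hyperlink{term-chunk}{chunk}, a ridge \hyperlink{term-chunk}{chunk} deformation retracts onto a preimage of the \hyperlink{term-ridge}{ridge torus}---a torus, annulus, or disk---and only the disk has $b_1=0$; a regular \hyperlink{term-chunk}{chunk} is an aspherical compact $3$-manifold with nonempty boundary, so half-lives-half-dies forces $\partial Q$ to be a union of $2$-spheres, and asphericity plus connectedness then forces $Q\cong B^3$. For the inductive step I would pick a leaf $v_0$ of $\Lambda_Q$ with incident edge $e_0$, set $Q'=Q\setminus\operatorname{int}(Q_{v_0})$, and specialize the Mayer--Vietoris surjectivity to conclude that $H_1(C_{\tilde T_{e_0}};\QQ)\twoheadrightarrow H_1(Q_{v_0};\QQ)$; combined with $\pi_1$-injectivity of the edge inclusion and half-lives-half-dies for $Q_{v_0}$, this should force $Q_{v_0}$ to be a product collar of $C_{\tilde T_{e_0}}$, so that $Q\simeq Q'$ and the inductive hypothesis finishes the argument.

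The hardest step will be the collaring claim in the induction: showing that $H_1$-surjectivity from a torus, annulus, or disk cut-boundary component, combined with asphericity and the structure of the aspherical Scott core, forces the leaf \hyperlink{term-chunk}{chunk} to deformation retract onto its cut boundary. This requires separate analyses for hyperbolic \hyperlink{term-piece}{pieces} (peripheral structure of Kleinian groups), Seifert fibered \hyperlink{term-piece}{pieces} (using fiber and base-orbifold data), and ridge \hyperlink{term-piece}{pieces} (using the explicit homotopy type of $Z$), together with ruling out spurious rational homology coming from boundary components of $Q_{v_0}$ that lie off $C_\tori$.
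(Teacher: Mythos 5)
Your setup is correct: $Q$ is aspherical, so contractibility reduces to $\pi_1(Q)=1$; the Mayer--Vietoris identity $b_1(Q;\QQ)=b_1(|\Lambda_Q|;\QQ)+\dim\operatorname{coker}\bigl(\bigoplus_e H_1\to\bigoplus_v H_1\bigr)$ is right and, when $b_1(Q)=0$, gives that $\Lambda_Q$ is a tree with $H_1(C_{\tilde T_{e_0}};\QQ)\twoheadrightarrow H_1(Q_{v_0};\QQ)$ at any leaf $v_0$; and the base cases (disk ridge for a single ridge chunk, sphere boundary via half-lives-half-dies for a single regular chunk) are fine. The gap is the collaring claim, and it is genuine: rational $H_1$-surjectivity from the cut boundary, $\pi_1$-injectivity, and half-lives-half-dies do \emph{not} together force $Q_{v_0}$ to deformation retract onto $C_{\tilde T_{e_0}}$. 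For example, if $J$ is a Seifert fibered piece whose base orbifold has an order-$2$ cone point (a $2/q$-torus-knot or $p/2$-cable piece) and $\tilde J$ is the cover corresponding to $\langle a\rangle$ with $a$ a lift of that order-$2$ rotation, then $C_{\tilde J}$ is a solid torus whose cut-boundary annulus carries $\pi_1=\langle a^2\rangle$; the inclusion is $\pi_1$-injective and a $\QQ$-homology isomorphism, and half-lives-half-dies holds, yet the index is $2$ and it is not a homotopy equivalence. What rules such a $Q_{v_0}$ out of a chunk with $b_1(Q)=0$ is precisely the conclusion of the lemma applied to $Q'$, but the hypothesis $b_1(Q';\QQ)=0$ is not available from the split Mayer--Vietoris: that only gives $b_1(Q')\le\dim\ker\bigl(H_1(C_{\tilde T_{e_0}};\QQ)\to H_1(Q_{v_0};\QQ)\bigr)$, which need not vanish when the cut boundary is a torus. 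So the inductive step as sketched does not close.

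The paper's proof takes a different, ridge-centered route. After noting that each component of $C_\tori\cap Q$ must separate (else $b_1\ge1$), it runs the Mayer--Vietoris accretion \emph{outward from a ridge subchunk}: if some ridge subchunk $S$ had an annulus or torus ridge, every adjacent maximal regular chunk would carry a non-sphere boundary component, and successively adjoining rings of ridge and regular chunks keeps $b_1\ge 1$, contradicting $b_1(Q)=0$. Hence all ridge subchunks have disk ridges and are contractible, $C_\tori\cap Q$ is a union of disks, and $H_1(Q;\QQ)\cong\bigoplus_R H_1(R;\QQ)$ over the maximal regular subchunks $R$; each $b_1(R)=0$ then makes $R$ a ball by your base-case argument, so $Q$ is a tree of contractible pieces glued along disks and hence contractible. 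You would need either to adopt this ridge-first organization or to supply an honest proof of the collaring step that does not secretly presuppose the lemma.
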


\begin{proof}
It suffices to prove the `if' direction. This is clear if $Q$ is a ridge \hyperlink{term-chunk}{chunk}. Also, if $Q$ a regular \hyperlink{term-chunk}{chunk}, $\partial Q$ must be a union of spheres so $Q$ is contractible as it is aspherical. To see the general case, first observe any component of $C_{\tori}\cap Q$ is separating in $Q$  as $b_1(Q)=0$. If there were a ridge \hyperlink{term-chunk}{subchunk} $S\subset Q$ with a \hyperlink{term-ridgeCovering}{ridge annulus or torus}, then any regular \hyperlink{term-chunk}{chunk} $R$ adjacent to $S$ would have at least one component of $\partial R$ which is not a sphere since it contains an essential loop, so $b_1(S)\geq1$. The union $Q'$ of $S$ and all its adjacent maximal regular \hyperlink{term-chunk}{chunks} has $b_1(Q')\geq 1$ by an easy Mayer-Vietoris argument. Furthermore, if $S'$ is a ridge \hyperlink{term-chunk}{chunk} adjacent to $Q'$, let $Q''$ is the union of $Q'$, $S'$ and all other maximal regular \hyperlink{term-chunk}{chunks} adjacent to $S'$. No matter $S'$ has a \hyperlink{term-ridgeCovering}{ridge disk, annulus or torus}, a Mayer-Vietoris argument again shows $b_1(Q'')\geq 1$. Keep going in this way, then in the end we would find $b_1(Q)\geq 1$.
Since $b_1(Q)=0$ by assumption, we see every ridge \hyperlink{term-chunk}{chunk} has a \hyperlink{term-ridgeCovering}{disk ridge}, hence is contractible. However, in this case, $\partial_{\tori}S$ is a union of disks for any ridge \hyperlink{term-chunk}{chunk} $S$, so for any maximal regular \hyperlink{term-piece}{piece} $R$ we must have $b_1(R)=0$ under the assumption $b_1(Q)=0$. This implies $Q$ is also contractible as we mentioned at the beginning. Therefore, $Q$ must be a contractible \hyperlink{term-chunk}{chunk}.
\end{proof}

For any component $\tilde{J}\subset \tilde{N}^e$ of the preimage of a JSJ \hyperlink{term-piece}{piece} $J\subset N^e$, the corresponding \hyperlink{term-chunk}{chunk} $C_{\tilde{J}}=C\cap \tilde{J}$ is called \emph{elementary} if $\pi_1(\tilde{J})$ is abelian, and it is called \emph{central} if $\pi_1(\tilde{J})$ is a subgroup of the center of $\pi_1(J)$. A central hyperbolic \hyperlink{term-chunk}{chunk} is always contractible, and a central Seifert fibered \hyperlink{term-chunk}{chunk} is either contractible or homeomorphic to a trivial $S^1$-bundle over a disk, and every ridge \hyperlink{term-chunk}{chunk} is central.

\begin{lemma}\label{simplifyCore} If $\pi_1(\tilde{N}^e)$ is non-abelian and $b_1(\tilde{N}^e)=1$, one may assume $C$ has no contractible \hyperlink{term-chunk}{chunk}, no non-central elementary \hyperlink{term-chunk}{chunk}, and $C_{\tori}=C\cap\kappa^{-1}(\tori)$ has no disk component.\end{lemma}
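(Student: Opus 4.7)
The plan is to modify the aspherical Scott core $C$ of Proposition \ref{ScottCore} by a finite sequence of reductions, each strictly decreasing the lexicographic complexity $(\#\text{chunks of }C,\ \#\text{components of }C_\tori)$ while preserving the aspherical Scott core property. The two hypotheses $\pi_1(\tilde{N}^e)$ non-abelian and $b_1(\tilde{N}^e)=1$ are used at each stage to rule out degenerate outcomes.

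\emph{Step 1 (disk components of $C_\tori$).} Let $D\subset C_\tori$ be a disk component, lying in a plane cover $\tilde{T}$ of a JSJ torus. Cutting $C$ along $D$ gives a Bass--Serre decomposition with trivial edge group: either $\pi_1(C)=\pi_1(C_1)\ast\pi_1(C_2)$ if $D$ is separating in the chunk graph, or $\pi_1(C)=\pi_1(C')\ast\ZZ$ otherwise. However $\pi_1(C)=\pi_1(\tilde{N}^e)$ is the fundamental group of the aspherical compact ridged $3$-complex $C$, and by an equivariant-sphere/Kneser--Milnor-type argument it admits no non-trivial free-product decomposition. So in the separating case one of the $\pi_1(C_i)$ must be trivial, so the corresponding aspherical sub-core $C_i$ is contractible and may be excised, yielding a smaller aspherical Scott core with one fewer disk; in the non-separating case one would get $\pi_1(\tilde{N}^e)=\ZZ$, contradicting non-abelianness. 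Hence the reduction always applies.

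\emph{Step 2 (contractible chunks).} After Step 1, $C_\tori$ has no disk component. If $Q$ were a contractible chunk, then by Lemma \ref{trivialCk} $b_1(Q)=0$, and every component of $\partial_\tori Q\subset\partial Q$, being $\pi_1$-injected into the simply-connected $Q$, is itself simply connected, hence a disk in $C_\tori$. None exist, so $Q$ has empty frontier in $C$, and is therefore an entire connected component of the connected $C$. This forces $C=Q$ contractible and $\pi_1(\tilde{N}^e)=1$, contradicting the non-abelian hypothesis. Thus no contractible chunks remain.

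\emph{Step 3 (non-central elementary chunks).} Let $C_{\tilde J}$ be elementary with $\pi_1(\tilde J)$ abelian and not contained in the center of $\pi_1(J)$. If $J$ is hyperbolic, the center of $\pi_1(J)$ is trivial, so $\pi_1(\tilde J)$ is a non-trivial abelian subgroup of a torsion-free Kleinian group and hence $\ZZ$ or $\ZZ^2$; by the tameness theorem the compact core of $\tilde J$ is a solid torus (a tubular neighborhood of a short geodesic lift) or a cusp torus neighborhood, so $C_{\tilde J}$ can be isotoped into a collar of $\partial\tilde J$ and absorbed into an adjacent regular or ridge chunk, strictly reducing the chunk count. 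For $J$ Seifert fibered, the analogous absorption uses that an abelian non-central subgroup of $\pi_1(J)$ lies on an essential torus or annulus in $\tilde J$, again allowing $C_{\tilde J}$ to be pushed into the boundary. The main obstacle I anticipate is in Step 1: making the equivariant-sphere argument rigorous for a compact aspherical ridged $3$-complex rather than a genuine $3$-manifold, and verifying that after excising the contractible side $C_i$ inside $\tilde{N}^e$ the remaining subspace is still an aspherical Scott core; this requires combining Proposition \ref{ScottCore} with coherence to control how the excision interacts with the ambient chunk structure.
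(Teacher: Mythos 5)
The central gap is in your Step 1. You assert that $\pi_1(C)=\pi_1(\tilde{N}^e)$ admits no non-trivial free-product decomposition by a Kneser--Milnor-type argument, but this is simply false in general: $\tilde{N}^e$ is a possibly infinite-sheeted cover, and $\pi_1(\tilde{N}^e)$ can certainly be a free product (e.g.\ a free group of rank $\geq 2$). Even for honest $3$-manifolds the prime decomposition controls freely-decomposable $\pi_1$ only for \emph{closed} (or irreducible) manifolds, and $C$ is an open cover's Scott core, not that. You already flag the obstacle of porting such an argument to ridged complexes, but the bigger problem is that the statement you want is not true, so no amount of care will make that route rigorous. The paper's route is much more elementary and avoids this entirely: it exploits the fact that after cutting $C$ along a non-separating disk $D$ the remainder $C'=C-D$ is still a \emph{chunk} (a connected union of components of $C_J$'s and $C_Z$), so one can apply Lemma~\ref{trivialCk} to $C'$. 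If $D$ is non-separating then $b_1(C')=b_1(C)-1=0$, so Lemma~\ref{trivialCk} makes $C'$ contractible and hence $C$ is homotopy equivalent to a circle, contradicting non-abelianness; if $D$ is separating then $C-D=C'\sqcup C''$ with $b_1(C')+b_1(C'')=1$, so one side has $b_1=0$, is contractible by Lemma~\ref{trivialCk}, and can be excised. No prime-decomposition input is needed. (The paper also runs the contractible-chunk removal first and the disk removal second; your ordering would be fine if Step~1 worked, but as written Step~2 inherits the gap.)

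Step 3 has a smaller but genuine gap as well. Your plan is to isotope a non-central elementary chunk $C_{\tilde J}$ into a collar of $\partial\tilde J$ and ``absorb'' it into \emph{an} adjacent chunk, but you never verify that such a chunk has only \emph{one} cut-boundary component. If $\partial_{\tori}C_{\tilde J}$ had two components lying on different adjacent chunks, removing $C_{\tilde J}$ could disconnect the core. The paper establishes the single-component property explicitly: using that $C_{\tilde J}\cong U\times I$ (hyperbolic case) or is a trivial $S^1$- or $I$-bundle over an annulus (Seifert fibered case), together with elementarity of $\pi_1(\tilde J)$, it shows any two cut-boundary components must coincide, and $\pi_1(U)\cong\pi_1(C_{\tilde J})\cong\pi_1(\tilde J)$. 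Only then is discarding a regular neighborhood of $C_{\tilde J}$ both $\pi_1$-preserving and connectivity-preserving. Your appeal to tameness is not wrong, but it is not sufficient to license the absorption on its own.
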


\begin{proof} We show this by modifying $C$. If there is a contractible \hyperlink{term-chunk}{chunk} $Q\subset C$, 
the \hyperlink{term-cutBdry}{cut} \hyperlink{term-cutBdry}{boundary} $\partial_{\tori}Q$ is a disjoint union of disks. 
If $D\subset \partial_{\tori}Q$ were non-separating  in $C$, $C'=C-D$ will have $b_1(C')=0$, 
then by Lemma \ref{trivialCk}, $C'$ is contractible, so $C$ is homotopy equivalent to a circle, 
which is impossible as $\pi_1(C)=\pi_1(\tilde{N}^e)$ is non-abelian by assumption. 
Thus $D\subset \partial_{\tori}Q$ is separating in $C$, so $C-D=C'\sqcup C''$, 
and $b_1(C')=0$ or $b_1(C'')=0$. Say $b_1(C')=0$, by Lemma \ref{trivialCk}, 
$C'$ is contractible. We may discard a small regular neighborhood of $C'$ from $C$, 
and the rest of $C''$ is still a Scott core. 
As there are at most finitely many \hyperlink{term-chunk}{chunks} in $C$, we may discard all the contractible ones, and obtain a Scott core, still denoted as $C$, with no contractible \hyperlink{term-chunk}{chunk}. 
A similar argument shows one may assume $C_{\tori}$ has no disk component. 
Now for any non-central elementary \hyperlink{term-chunk}{chunk} $Q$, 
$Q\neq C$ since $\pi_1(C)$ is non-abelian. As $C$ has no contractible \hyperlink{term-chunk}{chunk}, 
at least one component $U$ of $\partial_{\tori}Q$ is a torus or an annulus. 
Note when $Q$ is a hyperbolic \hyperlink{term-chunk}{chunk}, it is homeomorphic to $U\times I$, 
and  when $Q$ is a Seifert fibered \hyperlink{term-chunk}{chunk}, 
it is either a trivial $S^1$-bundle over an annulus if $U$ is a torus, 
or an $I$-bundle over an annulus if $U$ is an annulus, since $Q$ is non-central. 
Let $\tilde{J}$ be the component of the preimage of a regular JSJ \hyperlink{term-piece}{piece} $J$ 
such that $U$ is carried by a component $\tilde{T}\subset\partial\tilde{J}$. 
Suppose there were some other boundary component $U'$ carried by some 
other component $\tilde{T}'\subset\partial\tilde{J}$. As $U$, 
$U'$ are both essential annuli or tori by the construction of Proposition \ref{ScottCore}, 
$T$, $T'$ cannot be disjoint cylinders or tori, otherwise $\tilde{J}$ would not be elementary.
We conclude $T=T'$, hence $U=U'$ by the construction. 
It is also clear that $\pi_1(U)\cong\pi_1(Q)\cong\pi_1(\tilde{J})$ induced by the inclusion. 
We may discard a small regular neighborhood of $Q$ from $C$, 
and the rest is still connected and is a Scott core. 
Thus we discard all non-central elementary \hyperlink{term-chunk}{chunks} in this fashion. 
In the end, we may assume $C$ to have no non-central elementary \hyperlink{term-chunk}{chunk}. 
Note also the modifications here do not affect the properties of $C$ described in Proposition \ref{ScottCore}, 
so we obtain a Scott core $C$ as required.
\end{proof}

\section{Factorization through extended geometric drilling} \label{Sec-factor}
In this section, we show that a map from a given finite $2$-complex to an orientable aspherical compact $3$-manifold factorizes up to homotopy through \hyperlink{term-extDrilling}{extended} \hyperlink{term-extDrilling}{drilling} along sufficiently short geodesics in the hyperbolic \hyperlink{term-piece}{pieces} (Theorem \ref{factor-hyp}) and through \hyperlink{term-extDrilling}{extended} \hyperlink{term-extDrilling}{drillings} along a sufficiently sharp cone-fiber in the Seifert fibered \hyperlink{term-piece}{pieces} (Theorem \ref{factor-SF}). We need the notion of \hyperlink{term-prLen}{presentation length} for the statement and the proof of these results, so we introduce it in Subsection \ref{Subsec-prlen}.

\subsection{Presentation length}\label{Subsec-prlen}

For a finitely presented group $G$, the presentation length $\ell(G)$ of $G$ turns out to be useful in $3$-manifold topology, especially for the study of hyperbolic $3$-manifolds. For example, Daryl Cooper proved the volume of an orientable closed hyperbolic $3$-manifold $M$ is at most $\pi\ell(\pi_1(M))$ (\cite{Co}). Matthew E. White also gave an upper bound on the diameter of an orientable closed hyperbolic $3$-manifold $M$ in terms of $\ell(\pi_1(M))$ in a preprint (\cite{Wh}).

\begin{definition}\label{prlen} \raisebox{\baselineskip}[0pt]{\hypertarget{term-prLen}}Suppose $G$ is a nontrivial finitely presented group. For any presentation $\mathcal{P}=(x_1,\ldots,x_n;r_1,\ldots,r_m)$ of $G$ with the word-length of each relator
$|r_j|\geq 2$, define the \emph{length} of $\mathcal{P}$ as
$$\ell(\mathcal{P})=\sum_{j=1}^m (|r_j|-2),$$
and the \emph{presentation length} $\ell(G)$ of $G$ as the minimum of $\ell(\mathcal{P})$ among all such presentations. \end{definition}

Note by adding finitely many generators and discarding single-letter relators, $\ell(G)$ can always be realized by a \emph{triangular presentation}, namely of which the word-length of every relator equals $2$ or $3$. The length of a triangular presentation equals the number of relators of length $3$.

Associated to any finite presentation $\mathcal{P}$ of $G$, there is a \emph{presentation $2$-complex} $K$, 
which consists of a single $0$-cell $*$, and $1$-cells corresponding to the generators attached to $*$, 
and $2$-cells corresponding to the relators attached to the $1$-skeleton with respect to $\mathcal{P}$. 
The $2$-cells are all $2$-simplices or bigons if $\mathcal{P}$ is triangular.

\subsection{Drilling a short geodesic in a hyperbolic piece}\label{Subsec-factorHyp} 
In this subsection, we show that maps factorize through the \hyperlink{term-extDrilling}{extended drilling} 
of a short simple closed geodesic in a hyperbolic \hyperlink{term-piece}{piece}. 
The precise statement is as follows.

\begin{theorem}\label{factor-hyp} Let $G$ be a finitely presented group, and $M$ be an orientable aspherical compact $3$-manifold. Suppose there is a simple closed geodesic $\gamma$ in the interior of 
a hyperbolic \hyperlink{term-piece}{piece} $J\subset M$ with length sufficiently small, with respect to some complete hyperbolic metric on the interior of $J$, but depending only on the \hyperlink{term-prLen}{presentation length} $\ell(G)$. Then any homomorphism $\phi:G\to\pi_1(M)$ factors through the \hyperlink{term-extDehnFilling}{extended Dehn filling} epimorphism $\iota^e:\pi_1(N^{e(\zeta,m)})\to \pi_1(M)$ of some denominator $0<m\leq T(\ell(G))$, where $N=M-\gamma$, $\zeta$ is the meridian of the Margulis tube about $\gamma$,  and $T(n)=2\cdot 3^n$. Namely, $\phi=\iota^e\circ\phi^e$ for some $\phi^e:G\to\pi_1(N^e)$. 
\end{theorem}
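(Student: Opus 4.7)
The plan is to realize $\phi$ geometrically by a straightened map from a triangular presentation 2-complex $K$ of $G$ into $M$, exploit the Margulis tube around $\gamma$ to push the map off the core, and interpret the residual wrapping around $\gamma$ as a lift to the Dehn extension $N^{e(\zeta,m)}$.

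First, I would take a triangular presentation $\mathcal{P}$ of $G$ of length $\ell(G)=n$, so that $\mathcal{P}$ has at most $n$ triangular relators together with bigons, and let $K$ be the associated presentation 2-complex. Realizing $\phi$ by a continuous map $f\colon K\to M$, I would straighten $f$ (using an auxiliary geometric decomposition of $M$, hyperbolic on $J$) so that each 1-cell maps to a geodesic and each 2-cell to a ruled simplex. Cooper's area estimate then bounds the total area of $f(K)\cap J$ by $\pi\ell(G)$, and this is the only geometric input I need about $f$.

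Next, for $\gamma$ sufficiently short, with shortness depending only on $\ell(G)$ through this area bound, the Margulis tube $V$ about $\gamma$ has radius so large that any essential meridianal disk in $V$ has area exceeding $\pi\ell(G)$. Putting $f$ in general position with respect to $\partial V$, I would analyze $f^{-1}(\partial V)$ on each 2-cell: it is a pattern of arcs and simple closed curves whose complexity is controlled by the area bound, and in particular no 2-cell contains a spanning meridian disk of $V$. The central step is to excise the portions of $f(K)$ lying inside $V$ and replace them by maps into the ridge piece $Z\cong S^1\times \Psi(m)$ of $N^{e(\zeta,m)}$. Because $Z$ deformation retracts onto a ridge torus which is an $m$-fold cyclic cover of $\partial V$ in the meridian direction, this replacement is possible precisely when $m$ is a common multiple of the wrapping indices around $\gamma$ produced by the excised pieces on the boundaries of the 2-cells.

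The explicit bound $m\le 2\cdot 3^{\ell(G)}=T(\ell(G))$ should come from bookkeeping these wrapping indices across the 2-cells of $K$: each triangular relator contributes a bounded factor of roughly $3$, reflecting its three edges of possible crossings with $\partial V$, bigons a factor of at most $2$, and combining across at most $\ell(G)$ triangular relators yields $T(\ell(G))$. Once $m$ is chosen so that all wrapping indices divide $m$, the modified map takes values in $N^{e(\zeta,m)}$ and defines the required $\phi^e\colon G\to\pi_1(N^e)$; composing with the collapse map $\iota^e$, which sends $Z$ onto $\gamma$, recovers the original $\phi$ up to homotopy. The main obstacle I anticipate is the excision/push-off step itself, both in making the construction canonical enough that $\phi^e$ is a well-defined homotopy class, and in extracting the sharp combinatorial bound on wrapping numbers. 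The triangular and bigon structure of $\mathcal{P}$ is exactly what keeps the combinatorics bounded, but careful accounting is required to avoid over-counting contributions from cells sharing an edge that passes through $V$, and to ensure that the replacement maps into $Z$ agree along the 1-skeleton.
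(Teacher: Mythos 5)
Your high-level geometric plan is essentially the one the paper follows: take a triangular presentation complex $K$, straighten the image in the hyperbolic piece, use the Cao--Gehring--Martin tube estimate so that a very short $\gamma$ sits in a very deep Margulis tube $V$, and then try to push $f$ off $V$, with the residual wrapping recorded in the ridge piece $Z$. You also correctly identify the excision/push-off step as the crux. But your description of that crux is precisely where the gap lies, and it is not a cosmetic gap.

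First, the criterion you use for pushability is too weak. You say ``no 2-cell contains a spanning meridian disk of $V$'' and treat this as what needs to be ruled out. But a relative cycle representing a nontrivial class in $H_2(V,\partial V;\QQ)$ need not be a single 2-cell: it can be a $\ZZ$-linear combination of many $0$-, $1$-, and monkey-handles of $K_V=f^{-1}(V)$, and it is the image of such classes that must vanish. The paper makes this precise with a homological lifting criterion (Lemma \ref{factorCriterion}): the lift $f^e|_{K_V}\colon K_V\to Z(m)$ exists exactly when $f_*\colon H_2(K_V,K_{\partial V};\QQ)\to H_2(V,\partial V;\QQ)$ is zero, and then $m$ can be taken to be the maximal order of torsion in $H_1(K_V,K_{\partial V};\ZZ)$. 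To show that the $H_2$ map vanishes when $V$ is deep, you must exhibit a \emph{generating set} of $H_2(K_V,K_{\partial V};\QQ)$ by relative $\ZZ$-cycles of area bounded in terms of $\ell(G)$; this is Lemma \ref{smallGen}, and proving it requires a nontrivial linear-algebra argument over the natural handle basis of $\mathcal{C}_2(K_V,K_{\partial V})$ (with bounds of the form $27^{\ell}(9\ell+4)$ controlling the integral solutions of a linear system). Nothing in your sketch supplies this, and a pointwise statement about individual 2-cells will not.

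Second, your justification of the bound $m\le 2\cdot 3^{\ell(G)}$ is hand-waving. The heuristic ``each triangular relator contributes a factor of roughly 3'' is not a well-defined quantity to multiply: the denominator is controlled by the torsion of $H_1(K_V,K_{\partial V};\ZZ)$, and bounding it (Lemma \ref{smallTor}) amounts to bounding the minors of a presentation matrix built from the $0$-, $1$-, and monkey-handles (rows with entries of absolute value $\le 3$ summing to $\le 3$, at most $\ell(\mathcal{P})$ rows with more than one nonzero entry, plus $\pm 1$/$\pm 2$ rows from the other handles which can at most double the torsion). That structure is what produces $2\cdot 3^{\ell}$, and it is entirely absent from your ``wrapping indices'' bookkeeping. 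In short: the geometric intuition is right, but the actual proof requires an explicit obstruction-theoretic/homological criterion and two quantitative lemmas about $H_2(K_V,K_{\partial V};\QQ)$ and the torsion of $H_1(K_V,K_{\partial V};\ZZ)$, neither of which your proposal formulates or proves.
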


\begin{remark} By Mostow Rigidity, the complete hyperbolic metric on the interior of $J$ is unique, of finite volume, if $\partial J$ consists of tori or is empty.\end{remark}

We prove Theorem \ref{factor-hyp} in the rest of this subsection. The approach here is inspired by a paper 
of Matthew E. White (\cite{Wh}).

To begin with, take a presentation $2$-complex $K$ of a triangular presentation $\mathcal{P}$ achieving $\ell(G)$, and a PL map $f:K\to M$ realizing $\phi:G\to\pi_1(M)$. We may assume $f(K)$ intersects the JSJ tori $\tori$ in general position and the image of the $0$-simplex $*\in K$ misses $J$. Let $\epsilon_3>0$ be the Margulis constant for $\HH^3$, and $J_\geo=
(\mathring{J},\rho)$ be the interior of $J$ with the complete hyperbolic metric $\rho$ as assumed. By picking a sufficiently small $\epsilon<\epsilon_3$, we may endow $M$ with a complete Riemannian metric so that $J$ is isometric to $J_\geo$ with the open $\epsilon$-thin horocusps corresponding to $\partial J$ removed.

We may homotope $f$ so that $f(K)\cap J$ is totally geodesic on each $2$-simplex of $K$, and the total area of $f(K)\cap J$ is at most $\pi\ell(\mathcal{P})$. In fact, we may first homotope $f$ so that $f(K)$ meets the JSJ tori in minimal normal position, (i.e. the number of $K^{(1)}\cap f^{-1}(\tori)$ is minimal), then homotope rel $f^{-1}(\tori)$ so that $f(K)\cap J$ becomes totally geodesic. Roughly speaking, noting that $K\cap f^{-1}(J)$ is a union of $1$-handles (bands) and monkey-handles (hexagons) by the normal position assumption, the total area of $f(K)\cap J$ is approximately the total area of the monkey-handles. The area of each monkey-handle is bounded by $\pi$, which is the area of an ideal hyperbolic triangle, and there are at most $\ell(\mathcal{P})$ monkey-handles, as $\ell(\mathcal{P})$ equals the number of $2$-simplices of $K$. It is not hard to make a rigorous argument of the estimation using elementary hyperbolic geometry, so the total area of $f(K)\cap J$ can be bounded by $\pi\ell(\mathcal{P})$.

A theorem of Chun Cao, Frederick W. Gehring and Gavin J. Martin \cite{CGM} says that if $\gamma$ has length $l<\frac{\sqrt{3}}{2\pi}(\sqrt{2}-1)$, then there is an embedded tube $V\subset M$ of radius $r$ with the core geodesic $\gamma$, such that: $$\sinh^2(r)=\frac{\sqrt{1-(4\pi l/\sqrt{3})}}{4\pi l/\sqrt{3}}-\frac{1}{2},$$ \cite{CGM}. This means if $\gamma$ is very short, it lies in a very deep tube $V$. In particular, any meridian disk of $V$ will have very large area.

Up to a small adjustment of the radius of $V$, we may assume $f(K)$ intersects $\partial V$ in general position, 
and the $0$-simplex $*\in K$ misses $V$. Denote $K_V=f^{-1}(V)$, and $K_{\partial{V}}=f^{-1}(\partial V)$. 
Let $\zeta\subset\partial V$ be an oriented simple closed curve bounding a meridian disk in $V$. 
Topologically, let $i:N\to M$ be the Dehn filling inclusion identifying $N$ as $M\setminus\mathring{V}$. 
Remember $N^e=N^{e(\zeta,m)}=N\cup Z(m)$, where $Z(m)$ is the \hyperlink{term-ridge}{ridge piece} of some denominator $m>0$. 
We must show  if $\gamma$ is sufficiently short, then for some denominator $m$, there is a map $$f^e:K\to N^e,$$ such that $f\simeq i^e\circ f^e$. 
The lemma below is an easy criterion.

\begin{lemma}\label{factorCriterion} 
Suppose $f_*:H_2(K_V,K_{\partial V};\QQ)\to H_2(V,\partial V;\QQ)$ vanishes, and
let $m >0$ be the maximal order of torsion elements of $H_1(K_V,K_{\partial V};\ZZ)$.
Then  $f^e|_{K_V}$, and hence $f^e$, exists for $N^{e(\zeta,m)}$.
\end{lemma}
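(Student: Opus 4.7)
The plan is to retain $f$ on $K\setminus\mathring{K_V}$---where it already lands in the regular part $N\subset N^e$---and to build a replacement for $f|_{K_V}$ taking values in the ridge piece $Z\subset N^e$ and agreeing with $f|_{K_{\partial V}}$ along the side torus $\partial V$. Since $Z$ deformation retracts onto the ridge torus $T^2$, it is a $K(\ZZ^2,1)$; together with the fact that $K_V$ is a $2$-complex, this reduces the construction to an extension problem on first homology. Write $\pi_1(Z)=\ZZ a\oplus\ZZ b$ with $a=\lambda$ and $b=\zeta/m$; then $\pi_1(\partial V)=\ZZ\lambda\oplus\ZZ\zeta$ sits inside $\pi_1(Z)$ as the index-$m$ subgroup $\ZZ a\oplus m\ZZ b$, and one must extend the induced homomorphism $\phi^\partial:H_1(K_{\partial V})\to\pi_1(Z)$ along $i_*:H_1(K_{\partial V})\to H_1(K_V)$ to some $\phi^e:H_1(K_V)\to\pi_1(Z)$.

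From the long exact sequence of the pair $(K_V,K_{\partial V})$, such an extension exists if and only if (a) $\phi^\partial$ vanishes on $\ker i_*=\mathrm{im}(\partial)$, and (b) the resulting homomorphism on $\mathrm{im}(i_*)$ extends over $H_1(K_V)$, which is a divisibility test against torsion of the quotient $H_1(K_V)/\mathrm{im}(i_*)$. For (a), naturality of $\partial$ gives $\phi^\partial(\partial\sigma)=\partial(f_*\sigma)\in H_1(\partial V)$; the group $H_2(V,\partial V;\ZZ)\cong\ZZ$ is generated by the meridian disk $D$ with $\partial D=\zeta$, so writing $f_*\sigma=n_\sigma D$ yields $\phi^\partial(\partial\sigma)=n_\sigma\cdot mb\in\pi_1(Z)$. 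Hypothesis~(1), together with the torsion-freeness of $H_2(V,\partial V;\ZZ)$, forces $n_\sigma=0$ for every $\sigma$, so (a) holds.

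For (b), exactness embeds $H_1(K_V)/\mathrm{im}(i_*)$ into $H_1(K_V,K_{\partial V})$, so for each $g\in H_1(K_V)$ with $ng\in\mathrm{im}(i_*)$, hypothesis~(2) gives $n\mid m$. Pick $\alpha\in H_1(K_{\partial V})$ with $i_*\alpha=ng$. Commutativity with the projection $\pi_1(Z)\to\pi_1(V)=\ZZ a$ forces the $a$-component of $\phi^\partial(\alpha)$ to be $n\cdot f_*^V(g)$, divisible by $n$; the $b$-component lies in $m\ZZ\subseteq n\ZZ$, hence is also divisible by $n$. Thus $\phi^\partial(\alpha)$ is $n$-divisible in $\pi_1(Z)$, so the required $\phi^e$ exists.

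Finally, realize $\phi^e$---whose $a$-component we arrange to be $f_*^V$---as a map $K_V\to Z$ using that $Z$ is aspherical with $\pi_1=\ZZ^2$, and adjust within its homotopy class so that its restriction to $K_{\partial V}$ coincides pointwise with $f|_{K_{\partial V}}$ (standard homotopy extension on a collar of $K_{\partial V}$). Gluing with $f|_{K\setminus\mathring{K_V}}$ produces $f^e:K\to N^e$, and the relation $i^e\circ f^e\simeq f$ holds piecewise, using the asphericity of $V$ to deduce the homotopy from agreement on $H_1$. The main subtlety is (b): the divisibility check works precisely because $m$ is the torsion exponent of $H_1(K_V,K_{\partial V})$, ensuring every relevant $n$ divides $m$; a smaller choice of denominator could leave residual Ext-obstructions against extending the $b$-component.
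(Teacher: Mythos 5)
Your proof is correct and takes essentially the same route as the paper's: reduce the lifting problem to first homology using that $Z$ and $V$ are aspherical with abelian $\pi_1$, kill $\ker(H_1(K_{\partial V})\to H_1(K_V))$ via hypothesis~(1) together with $H_2(K_V)=0$ and the long exact sequence of the pair, and settle the remaining obstruction via the torsion bound~(2). The paper builds the lift $\psi:H_1(K_V)\to H_1(Z(m))$ explicitly from a splitting $H_1(K_V)=\bar B\oplus\ZZ[u_1]\oplus\cdots\oplus\ZZ[u_t]$ (with $\bar B/{\rm im}(i_*)$ the torsion part), whereas you phrase it as vanishing of an Ext-obstruction and verify element-wise $n$-divisibility; these are the same computation, the only compression being that you assert "so the required $\phi^e$ exists" rather than noting, e.g.\ via Smith normal form for ${\rm im}(i_*)\subset\bar B$ inside the free abelian $H_1(K_V)$, that the pointwise divisibility yields a coherent extension, and that on $\bar B$ the $a$-component of $\phi^e$ is already forced to equal $f_*^V$ by your step~(b), so "we arrange" applies only on a free complement.
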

\begin{proof} It suffices to show that there exists a lift $f^e|_{K_V}:K_V\to Z(m)$ commuting with the diagram up to homotopy:
$$\begin{CD} K_{\partial V} @>f_{\partial V}^e>> Z(m)\\
@V j V\cap V @V i^e VV\\
K_V@>f_V >> V,
\end{CD} $$
where $f_V, f_{\partial V}$ are the restrictions of $f:K\to M$ to $K_V, K_{\partial V}$ respectively, and $f_{\partial V}^e$ is $f_{\partial V}$ post-composed with $\partial V\subset Z(m)$. This is a relative homotopy extension problem which can be resolved by obstruction theory, but we give a manual proof here for the reader's reference. Because $\pi_1(Z(m))$ and $\pi_1(V)$ are abelian, this is the same as finding a lifing $\psi:H_1(K_V)\to H_1(Z(m))$ commuting with the diagram above on homology. If $f_*:H_2(K_V,K_{\partial V};\QQ)\to H_2(V,\partial V;\QQ)$ vanishes, so does $f_*:H_2(K_V,K_{\partial V})\to H_2(V,\partial V)$ since $H_2(V,\partial V)\cong\ZZ$. Note that since $K_V$ is homotopy equivalent to a graph, $H_2(K_V)=0$ in the relative homology sequence:
$$\cdots\to H_2(K_V)\to H_2(K_V,K_{\partial V})\to H_1(K_{\partial V})\to H_1(K_V)\to\cdots.$$ 
Thus $H_2(K_V,K_{\partial V})\leq H_1(K_{\partial V})$ is the kernel of $H_1(K_{\partial V})\to H_1(K_V)$.
From the commutative diagram:
$$\begin{CD} H_2(K_V,K_{\partial V}) @>0>> H_2(V,\partial V)\\
@VVV @VVV\\
H_1(K_{\partial V})@>>> H_1(\partial V)@>\subset>> H_1(Z(m)),
\end{CD} $$
we conclude the kernel of $H_1(K_{\partial V})\to H_1(K_V)$ is contained
in the kernel of $H_1(K_{\partial V})\to H_1(Z(m))$. Denote $A=H_1(K_V)$, $B={\rm Im}\{H_1(K_{\partial V})\to H_1(K_V)\}$. Since $A$ is a finitely generated abelian group,
$A=\bar{B}\oplus \ZZ [u_1]\oplus\cdots\oplus\ZZ [u_t]$ where $\bar{B}/B$ is torsion and $[u_i]\in A$, for $1\leq i\leq t$. Take $m>0$ to be the least common multiple of the orders of elements in $\bar{B}/B$ (equivalently, the maximal order of torsion elements in $H_1(K_V,K_{\partial V};\ZZ)$). Then $\psi:A\to H_1(Z(m))$ can be constructed as follows. 
Let $\eta\in \partial V$ be a slope intersecting the filling slope $\zeta\subset\partial V$ in one point, such that $i^e_*[\eta]=[\gamma]$. For any $[u_i]$, $1\leq i\leq t$, define $\psi([u_i])=[\tilde{u}_i]\in H_1(Z(m))$ such that $i^e_*[\tilde{u}_i]=f_{V*}[u_i]$. For any $[v]\in \bar{B}$ with order $s>0$ in $\bar{B}/B$, let $[w]\in H_1(K_{\partial{V}})$ be such that $j_*(w)=s\,[v]$. Note $f_{V*}(s\,[v])=s\,f_{V*}[v]=sb\,[\gamma]$, for some integer $b$. Then $i^e_*f_{\partial V*}[w]=i^e_*(sb\,[v])$. This means $f_{\partial V*}[w]=sb\,[\eta]+c\,[\zeta]$ for some integer $c$. Since $s$ divides $m$ by the choice of $m$, $b[\eta]+\frac{c}{s}[\zeta]\in H_1(\partial V)+\ZZ\frac{[\zeta]}{m}\cong H_1(Z(m))$. We may define $\psi([v])=b\,[\eta]+\frac{c}{s}\,[\zeta]$. It is straightforward to check $\psi$ is a well-defined homomorphism as required.
\end{proof}

For any $\RR$-coefficient chain of PL singular $2$-simplices into $M$, its area is known as the sum of the unsigned pull-back areas of the $2$-simplices, weighted by the absolute values of the coefficients. Because any PL singular relative $\ZZ$-cycle which represents a nontrivial element of $H_2(V,\partial V;\ZZ)\cong\ZZ$ will have arbitrarily large area if $V$ has sufficiently large radius, to apply the criterion in Lemma \ref{factorCriterion}, it suffices to show that $H_2(K_V,K_{\partial V};\QQ)$ has a generating set whose elements are represented by relative $\ZZ$-cycles each with area bounded in terms of $\ell(\mathcal{P})$.

\begin{lemma}\label{smallGen} There is a generating set of $H_2(K_V,K_{\partial V};\QQ)$ whose elements are represented by relative $\ZZ$-cycles each with area bounded by $A(\ell(\mathcal{P}))$, where $A(n)=27^n(9n^2+4n)\pi$.\end{lemma}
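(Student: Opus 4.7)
The plan is to equip $K_V$ with a finite cell structure of combinatorial complexity polynomial in $n=\ell(\mathcal{P})$, produce integer relative $2$-cycles generating $H_2(K_V,K_{\partial V};\QQ)$ with coefficients bounded exponentially in $n$, and bound the area of each generator by the product of these two bounds with $\pi$ (the maximum area of a totally geodesic hyperbolic triangle). The main difficulty will lie in Step 1 below, namely controlling the combinatorial complexity of $K_V$ by a quantity depending only on $n$, independently of the (possibly very deep) geometry of the tube $V$: a priori a totally geodesic triangle of area $\leq \pi$ can meet a deep round tube in many arcs, so the bound must be extracted from minimality of the normal position rather than from any direct geometric estimate.

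I would first homotope $f$ into minimal general position with respect to $\partial V$, so that $K_{\partial V}=f^{-1}(\partial V)$ meets each triangular $2$-cell $\sigma$ of $K$ in a disjoint union of PL arcs with endpoints on $\partial\sigma$. Since $f|_\sigma$ is totally geodesic of area at most $\pi$, minimality forbids parallel bigons between arcs of $K_{\partial V}$ in $\sigma$ that could be pushed out through $V$ by a cross-homotopy. A bookkeeping argument based on the number of intersections of $K_{\partial V}$ with each edge of $K$ then bounds the number of arcs per $2$-cell linearly in $n$, so that $K_V$ receives a cellulation with at most $9n^2+4n$ two-cells and correspondingly controlled numbers of $0$- and $1$-cells.

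With this cellulation in hand, $H_2(K_V,K_{\partial V};\QQ)$ equals the kernel of the relative cellular boundary map $\partial_2\colon C_2(K_V;\QQ)\to C_1(K_V,K_{\partial V};\QQ)$, which is represented by an integer matrix with entries in $\{0,\pm1\}$, at most three nonzero entries per column (since each $2$-cell has at most three edges), and dimensions bounded by $9n^2+4n$. Extracting an integral basis of $\ker\partial_2$ by Smith normal form, or equivalently via Cramer's rule on a maximal nonsingular square submatrix, yields a $\QQ$-basis of $H_2(K_V,K_{\partial V};\QQ)$ lying in $\ZZ^{9n^2+4n}$; the sparsity bound on the matrix (at most three nonzeros per column, together with the combinatorial control on the row dimension) gives coefficients bounded in absolute value by $3^{3n}=27^n$.

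Finally, each $2$-cell $\sigma_i$ of $K_V$ is a subregion of a totally geodesic hyperbolic triangle under $f$, so $\mathrm{area}(\sigma_i)\leq\pi$. An integer basis element $c=\sum c_i\sigma_i$ therefore has area
$$\mathrm{area}(c)=\sum|c_i|\,\mathrm{area}(\sigma_i)\leq 27^n\cdot(9n^2+4n)\cdot\pi=A(n),$$
as required.
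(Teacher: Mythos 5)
Your Step 1 contains a fatal gap. You claim that after achieving minimal general position, the number of arcs of $K_{\partial V}$ in each $2$-cell is bounded linearly in $n=\ell(\mathcal{P})$, giving $K_V$ a cellulation with $O(n^2)$ two-cells. This is not true, and no homotopy argument can make it true: a single totally geodesic triangle of area $\leq\pi$ can meet a very deep Margulis tube $V$ in an \emph{unbounded} number of components, because in the universal cover a lift of the triangle can cross many distinct translates of the preimage of $V$, and the resulting arcs spiral around the tube rather than forming removable parallel bigons. The number of handles in $K_V$ genuinely depends on the geometry (the depth of $V$), not on $n$ alone. You anticipated this difficulty in your preamble but the resolution you propose (minimality plus "no parallel bigons") does not hold. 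Consequently your matrix has unbounded dimension, and neither the claimed cell count $9n^2+4n$ nor the claimed coefficient bound $27^n$ follows; a Cramer-type bound on an $N\times N$ matrix with $3$-sparse columns grows like $3^N$, so you would need $N\leq 3n$ or so, which your setup does not give.

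The paper circumvents exactly this obstruction and is worth studying carefully. It never bounds the number of $0$- or $1$-handles in $K_V$. Instead it uses convexity of $V$ and of the geodesic triangles to conclude that each $2$-simplex contributes at most \emph{one} monkey-handle (the unique convex piece touching all three sides), so the number of monkey-handles $t$ is $\leq n$, while the $1$-handles form an $I$-bundle $K'_I$ over a graph whose number of components $p$ satisfies $p\leq 3t\leq 3n$. The linear system $A\vec u=\vec 0$ is then set up not over the raw chain group but modulo $\mathcal{B}_1(K'_I,K'_{\partial I})$, so the number of \emph{rows} is $p\leq 3n$ regardless of how many handles there are. The Cramer/adjugate estimate then gives coefficients $\leq 3^p\leq 27^n$, a further lifting step contributes the factor $(3p+4)\leq(9n+4)$, and the area enters not as "$\pi$ per cell times cell count" but as the global bound $\mathrm{Area}(K_V)\leq\pi n$. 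Multiplying $27^n(9n+4)\cdot\pi n$ gives $A(n)=27^n(9n^2+4n)\pi$. You have accidentally re-derived the numerology of $A(n)$ by misreading $9n^2+4n$ as a cell count; the decomposition of the bound in the paper is different and is what makes the argument go through.

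Two smaller points: (i) your boundary matrix does not have entries in $\{0,\pm1\}$ once monkey-handles are present; contributions to a fiber class can be as large as $\pm3$; (ii) even granting your cell count, the Smith-normal-form/Cramer bound on an integer matrix of size $9n^2+4n$ with $3$-sparse columns would be $3^{9n^2+4n}$, not $27^n$, so your step from sparsity to $27^n$ needs the rank reduction to $p\leq 3n$ that the paper's monkey-handle count provides.
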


\begin{proof} Without loss of generality, we may assume $K_V$ does not contain the $0$-simplex $*$ of $K$. Because $V$ is convex and each $2$-simplex of $K$ has convex image within the hyperbolic \hyperlink{term-piece}{piece}, $K_V$ is a finite union of \emph{$0$-handles} (half-disks), \emph{$1$-handles} (bands), \emph{monkey-handles} (hexagons), and possibly a few \emph{isolated disks} (disks whose boundary do not meet the $1$-skeleton of $K$), cf. Figure \ref{figKV}. It is clear that the number of monkey-handles is at most the number of $2$-simplices, hence bounded by $\ell(\mathcal{P})$, and the union of $1$-handles in $K_V$ is an $I$-bundle over a (possible disconnected) graph. By fixing an orientation for each of them, the handles and the isolated disks give a CW-complex structure on $K_V$ in an obvious fashion. Let $\mathcal{C}_*(K_V,K_{\partial V})$, $\mathcal{Z}_*(K_V,K_{\partial V})$, $\mathcal{B}_*(K_V,K_{\partial V})$ denote the free $\ZZ$-modules of cellular relative chains, cycles and boundaries, respectively. $\mathcal{C}_2(K_V,K_{\partial V})$ has a natural basis consisting of the handles and the isolated disks. 

\begin{figure}
\centering
\psfrag{0}{\scriptsize{a $0$-handle}}
\psfrag{1}{\scriptsize{a $1$-handle}}
\psfrag{m}{\scriptsize{a monkey-handle}}
\psfrag{i}{\scriptsize{an isolated disk}}
\psfrag{K}{\scriptsize{$K_V$}}
\psfrag{P}{\scriptsize{$K_{\partial V}$}}
\includegraphics[scale=.7]{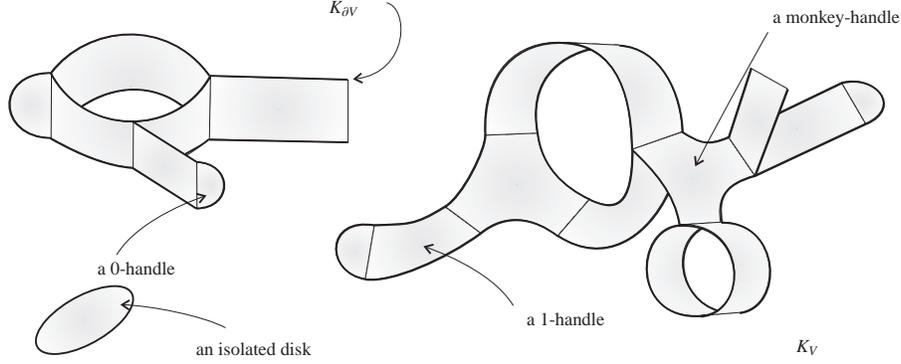}
\caption{$K_V$ and $K_{\partial V}$.}\label{figKV}
\end{figure}

To prove the lemma, it suffices to find a generating set for $\mathcal{Z}_2(K_V,K_{\partial V};\QQ)$ whose elements are in $\mathcal{Z}_2(K_V,K_{\partial V})\leq\mathcal{C}_2(K_V,K_{\partial V})$ with bounded coefficients over the natural basis. Decompose $K_V$ as: $$K_V=S_V\sqcup E_V\sqcup K'_V,$$ where $S_V$ is the union of the isolated disk components, $E_V$ is the union of the components that contain no monkey-handles, and $K'_V$ is the union of the components that contain at least one monkey-handle. Let $S_{\partial V}$, $E_{\partial V}$, $K'_{\partial V}$ be the intersection of $S_V$, $E_V$, $K'_V$ with $K_{\partial V}$, respectively. $$\mathcal{Z}_2(K_V,K_{\partial V};\QQ)=\mathcal{Z}_2(S_V,S_{\partial V};\QQ)\oplus\mathcal{Z}_2(E_V,E_{\partial V};\QQ)\oplus\mathcal{Z}_2(K'_V,K'_{\partial V};\QQ).$$
It suffices to find bounded generating relative $\ZZ$-cycles for the direct-summands separately.

First, consider $\mathcal{Z}_2(S_V,S_{\partial V};\QQ)$. Clearly, it has a generating set whose elements are the isolated disks. Hence absolute value of the coefficients over the natural basis are bounded $\leq 1$ for every element of the generating set.

Secondly, consider $\mathcal{Z}_2(E_V,E_{\partial V};\QQ)$. We show that it has a generating set whose elements have coefficients bounded $\leq2$ in absolute value over the natural basis. To see this, note $E=E_I\cup D_1\cup\cdots \cup D_s$ is a union of an $I$-bundle $E_I$ over a (possibly disconnected) graph $\Gamma_I$ together with $0$-handles $D_j$, $1\leq i\leq s$. $K_{\partial V}\cap E_I$ is an embedded $\partial I$-bundle $E_{\partial I}$. Now $\mathcal{Z}_2(E_I,E_{\partial I};\QQ)$ can be generated by all the relative $\ZZ$-cycles, in fact finitely many, of the following forms: (i) $A_I\in\mathcal{Z}_2(E_I,E_{\partial I})$, where $(A_I,A_{\partial I})\subset (E_I,E_{\partial I})$ is a sub-$I$-bundle which is an embedded annulus; or (ii) $R_I+2B_I+R'_I\in\mathcal{Z}_2(E_I,E_{\partial I})$, where $(R_I,R_{\partial I}),(R_I',R_{\partial I}')\subset (E_I,E_{\partial I})$ are sub-$I$-bundles which are embedded M\"obius strips, and $(B_I,B_{\partial I})\subset (E_I,E_{\partial I})$ is a sub-$I$-bundle which is an embedded band joining $R_I$ and $R'_I$. Moreover, $\mathcal{Z}_2(E_V,E_{\partial V};\QQ)\,/\,\mathcal{Z}_2(E_I,E_{\partial I};\QQ)$ can be generated by the residual classes represented by all the relative $\ZZ$-cycles, in fact finitely many, of the following forms: (i) $D_j+B_I\pm D_{j'}\in\mathcal{Z}_2(E_V,E_{\partial V})$, where $D_j$, $D_{j'}$ are distinct $0$-handles, and $(B_I,B_{\partial I})\subset(E_I,E_{\partial I})$ is a sub-$I$-bundle which is an embedded band joining $D_j$ and $D_{j'}$; or (ii) $2D_j+2B_I+R_I\in\mathcal{Z}_2(E_V,E_{\partial V})$, where $D_j$ is a $0$-handle, and $(R_I,R_{\partial I})\subset (E_I,E_{\partial I})$ is a sub-$I$-bundle which is an embedded M\"obius strip, and $(B_I,B_{\partial I})\subset (E_I,E_{\partial I})$ is a sub-$I$-bundle which is an embedded band joining $D_j$ and $R_I$.
All these relative $\ZZ$-cycles together generate $\mathcal{Z}_2(E_V,E_{\partial V};\QQ)$, and each of them has coefficients bounded $\leq2$ in absolute value over the natural basis.

Finally, consider $\mathcal{Z}_2(K'_V,K'_{\partial V};\QQ)$. We show that it has a generating set whose elements have coefficients bounded $\leq27^{\ell(\mathcal{P})}(9\ell(\mathcal{P})+4)$ in absolute value over the natural basis. To see this, note $K'_V=K'_I\cup D_1\cup\cdots\cup D_s\cup F_1\cdots F_t$ is a union of an $I$-bundle $K'_I$ over a (possibly disconnected) graph, and $0$-handles $D_j$, $1\leq j\leq s$, and monkey-handles $F_k$, $1\leq k\leq t$. Note $t\leq\ell(\mathcal{P})$. Moreover, $K'_I=B_1\cup\cdots\cup B_r$ is a union of $1$-handles $B_i$, $1\leq i\leq r$, and it is also a disjoint union of components $K'_{I,1},\cdots, K'_{I,p}$, where $p\leq 3t\leq 3\ell(\mathcal{P})$. 

Let $\bar\partial:\mathcal{C}_2(K'_V,K'_{\partial V})\to \mathcal{C}_1(K'_V,K'_{\partial V})$ be the relative boundary operator. Then $\mathcal{Z}_2(K'_V,K'_{\partial V};\QQ)$ is by definition the solution space of: $$\bar\partial U=0,$$ 
for $U\in\mathcal{C}_2(K'_V,K'_{\partial V};\QQ)$.
We shall first solve the residual equation $\bar\partial U=0$ modulo $\mathcal{B}_1(K'_I,K'_{\partial I})$, then lift a set of fundamental solutions to solutions of $\bar\partial U=0$ by adding chains from $\mathcal{C}_2(K'_I,K'_{\partial I})$. This set of solutions together with a generating set of $\mathcal{Z}_2(K'_I,K'_{\partial I};\QQ)$ will be a generating set of $\mathcal{Z}_2(K'_V,K'_{\partial V};\QQ)$.

To solve $\bar\partial U=0$ modulo $\mathcal{B}_1(K'_I,K'_{\partial I})$, we write:
$$U=\sum_{i=1}^r x_i\,B_i+\sum_{j=1}^s y_j\,D_j+\sum_{k=1}^t z_k\,F_k.$$
The topological interpretation of $\bar\partial U$ modulo $\mathcal{B}_1(K'_I,K'_{\partial I})$ is the total `contribution' of the base elements $B_i$, $D_j$, $F_k$'s to the fiber of each component of $K'_I$. 

To make sense of this, on each component $K'_{I,l}$ of $K'_I$, we pick an oriented fiber $\varphi_l$, $1\leq l\leq p$. Note $\mathcal{C}_1(K'_V,K'_{\partial V})=\mathcal{C}_1(K'_I,K'_{\partial I})=\mathcal{C}_1(K'_{I,1},K'_{\partial I,1})\oplus\cdots\oplus \mathcal{C}_1(K'_{I,p},K'_{\partial I,p})$, and: 
$$\mathcal{C}_1(K'_I,K'_{\partial I})\,/\,\mathcal{B}_1(K'_I,K'_{\partial I})\cong\ZZ^{\oplus p},$$ generated by $\varphi_1,\cdots, \varphi_p\bmod\mathcal{B}_1(K'_I,K'_{\partial I})$. 
The contribution of $B_i$, $D_j$, $F_k$ on $\phi_l$ is formally the value of $\bar\partial B_i$, $\bar\partial D_j$, $\bar\partial F_k$ modulo $\mathcal{B}_1(K'_I,K'_{\partial I}))$ on the $l$-th direct-summands. In other words, we count algebraically how many components of $\bar\partial B_i$ is parallel to $\varphi_l$ in $K'_{I,l}$, and similarly for $\bar\partial D_j$, $\bar\partial F_k$. In this sense, on any $\varphi_l$, each $B_i$ contributes $0$ or $\pm 2$, each $D_j$ contributes $0$ or $\pm 1$, and each $F_k$ contributes $0$, $\pm 1$, $\pm 2$ or $\pm 3$. Let $\vec{u}$ be the column vector of coordinates $(x_1,\cdots,x_r,y_1,\cdots, y_s,z_1,\cdots,z_t)^T$, and $q=r+s+t$. Let $a_{lm}$ be the contribution of the $m$-th basis vector (corresponding to some $B_i$, $D_j$ or $F_k$) on $\varphi_l$. Thus, $a_{lm}$ are integers satifying $|a_{lm}|\leq 3$, for $1\leq l\leq p$, $1\leq m\leq q$, and $\sum_{l=1}^p |a_{lm}|\leq 3$, for $1\leq m\leq q$. The residual equation $\bar\partial U=0\bmod B_1(K'_I,K'_{\partial I})$ becomes a linear system of equations: $$A\vec{u}=\vec{0},$$ where $A=(a_{lm})$ is a $p\times q$ integral matrix. Every column of $A$ has at most $3$ nonzero entries, and the sum of their absolute values is at most $3$. Our aim is to find a set of fundamental solutions over $\QQ$ with bounded integral entries.

Picking out a maximal independent collection of equations if necessary, we may assume $p$ equals the rank of $A$ over $\QQ$. We may also re-order the coordinates and assume the first $p$ columns of $A$ are linearly independent over $\QQ$. Let $A=(P,Q)$ where $P$ consists of the first $p$ columns and $Q$ of the rest $q-p$ columns. Let $\vec{u}=\left(\begin{array}{c}\vec{v}\\ \vec{w}\end{array}\right)$ be the corresponding decomposition of coordinates. Then the linear system becomes $P\vec{v}+Q\vec{w}=\vec{0}$. Basic linear algebra shows that a set of fundamental solutions
is $\vec{v}_n=-P^{-1}Q\,\vec{e}_n$, $\vec{w}_n=\vec{e}_n$, where $1\leq n\leq q-p$ and $(\vec{e}_1,\cdots,\vec{e}_{q-p})$ is the natural basis of $\RR^{q-p}$. We clear the denominator by letting ${\vec{v}_n}^*=-P^*Q\,\vec{e}_n$, ${\vec{w}_n}^*=\det(P)\,\vec{e}_n$, where $P^*$ is the adjugate matrix of $P$. The corresponding $\vec{u}^*_1,\cdots, \vec{u}^*_{q-p}$ is a set of fundamental solutions over $\QQ$ of the linear system $A\vec{u}=\vec{0}$ with integral entries. 

For each $1\leq n\leq q-p$, $\vec{u}^*_n$ has at most $p+1$ non-zero entries, and the absolute value of the entries are all bounded $3^p$. Indeed, $\vec{u}^*$ has at most $p+1$ non-zero entries by the way we picked $\vec{v}^*_n$ and $\vec{w}^*_n$. To bound the absolute value of entries, note each column of $P$ has at most $3$ nonzero entries whose absolute value sum $\leq 3$. 
It is easy to see $|\det(P)|\leq 3^p$ by an induction on $p$ using column expansions. Similarly, the absolute value of each entry of $P^*$ is at most $3^{p-1}$, and each column of $Q$ has at most $3$ nonzero entries whose absolute value sum $\leq 3$, so the absolute value of any entry of $-P^*Q$ is also $\leq 3^p$.

Let $U^*_1,\cdots, U^*_{q-p}\in\mathcal{C}_2(K'_V,K'_{\partial V})$ be the relative $2$-chains corresponding to the fundamental solutions $\vec{u}^*_1,\cdots,\vec{u}^*_{q-p}$ respectively as obtained above. Then the $U^*_n$'s form
a set of fundamental solutions to $\bar\partial U=0\bmod \mathcal{B}_1(K'_I,K'_{\partial I})$. To lift $U^*_n$
to a solution of $\bar\partial U=0$, note $\bar\partial U^*_n$ is the $\ZZ$-algebraic sum of $1$-simplices each parallel to a fiber $\varphi_l$. For a $1$-simplex $\sigma$ parallel to $\varphi_l$ coming from $\bar\partial U^*_n$, we pick a sub-$I$-bundle of $K'_{I,l}$ which is an embedded band joining $\sigma$ and $\varphi_l$, and let $L_n\in \mathcal{C}_2(K'_I,K'_{\partial I})$ be the relative $\ZZ$-chain which is the algebraic sum of all such sub-$I$-bundles.
Since each sub-$I$-bundle as a relative $\ZZ$-chain has coefficient bounded by $1$ in absolute value over the natural basis, the absolute values of coefficients of $L_n$ are bounded $\leq 3\cdot3^p(p+1)=3^{p+1}(p+1)$.
Let $\hat{U}_n=U^*_n-L_n$, $1\leq n\leq q-p$, then $\bar\partial \hat{U}_n=0$, with coefficients bounded $\leq 3^{p+1}(p+1)+3^p=3^p(3p+4)$ in absolute value.

In other words, $\hat{U}_n\in\mathcal{Z}_2(K'_V,K'_{\partial V})$, $1\leq n\leq q-p$. Moreover, $\hat{U}_n$'s together with a generating set of $\mathcal{Z}_2(K'_I,K'_{\partial I};\QQ)$ generate $\mathcal{Z}_2(K'_V,K'_{\partial V};\QQ)$. Note $K'_I$ has no monkey-handle, the no-monkey-handle case implies that $\mathcal{Z}_2(K'_I,K'_{\partial I};\QQ)$ has a generating set of relative $\ZZ$-cycles with coefficients bounded by $2$ in absolute value. Therefore, $\mathcal{Z}_2(K'_V,K'_{\partial V};\QQ)$ has a generating set of relative $\ZZ$-cycles, consisting of $\hat{U}_n$'s and the generating set of $\mathcal{Z}_2(K'_I,K'_{\partial I};\QQ)$ as above, with coefficients bounded by $3^p(3p+4)$ in absolute value. Remember $p\leq 3t\leq 3\ell(\mathcal{P})$, the absolute values of coefficients are bounded  $\leq 3^{3\ell(\mathcal{P})}(3\cdot 3\ell(\mathcal{P})+4)=27^{\ell(\mathcal{P})}(9\ell(\mathcal{P})+4)$.

To sum up, putting the generating sets of $\mathcal{Z}_2(S_V,S_{\partial V};\QQ)$, $\mathcal{Z}_2(E_V,E_{\partial V};\QQ)$, $\mathcal{Z}_2(K'_V,K'_{\partial V};\QQ)$ together, we obtain a generating set of $\mathcal{Z}_2(K_V,K_{\partial V};\QQ)$ of relative $\ZZ$-cycles with coefficients bounded by $27^{\ell(\mathcal{P})}(9\ell(\mathcal{P})+4)$ over the natural basis. In particular, they represent homology classes that generate $H_2(K_V,K_{\partial V};\QQ)$. Remember the natural basis of $\mathcal{C}_2(K_V,K_{\partial V})$ consists of handles and isolated disks, whose total area is bounded by $\pi\ell(\mathcal{P})$. Therefore, the generating set consists of relative $\ZZ$-cycles with area bounded $\leq27^{\ell(\mathcal{P})}(9\ell(\mathcal{P})+4)\cdot\Area(K_V)\leq A(\ell(\mathcal{P}))$, where $A(n)=27^n(9n^2+4n)\pi$.
\end{proof}

The following lemma bounds the torsion orders of $H_1(K_V,K_{\partial V};\ZZ)$:

\begin{lemma}\label{smallTor} The maximal order of torsion elements of $H_1(K_V,K_{\partial V};\ZZ)$ is bounded by
$T(\ell(\mathcal{P}))$, where $T(n)=2\cdot 3^n$.\end{lemma}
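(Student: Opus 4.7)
The plan is to bound the torsion exponent of $H_1(K_V,K_{\partial V};\ZZ)$ by analyzing the cellular chain complex set up in the proof of Lemma \ref{smallGen}. Since $*\in K$ lies outside $V$ and every edge of $K$ is a loop at $*$, no $0$-cell of $K$ sits inside $V$, so $\mathcal{C}_0(K_V,K_{\partial V})=0$ and $H_1(K_V,K_{\partial V};\ZZ)=\mathcal{C}_1/\text{im}(\partial_2)$; its torsion equals that of $\text{coker}(\partial_2)$. The decomposition $K_V=S_V\sqcup E_V\sqcup K'_V$ splits $H_1$ into a direct sum, so the torsion exponent is the least common multiple of the three contributions. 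The $S_V$ contribution vanishes. For $E_V$, each component is an $I$-bundle over a graph with $0$-handles attached, i.e., a surface with corners, so $H_1(E_V,E_{\partial V};\ZZ)$ has torsion exponent at most $2$, the factor $2$ arising from possibly non-orientable (M\"obius-type) $I$-bundle components via a signed-graph incidence computation of $\partial_2|_{E_V}$.

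The heart of the argument is $K'_V$. I split $\partial_2=\partial_2^{bnh}+\partial_2^m$ into the boundary of the non-monkey (bands and $0$-handles) and monkey-handle parts, and write $H_1(K'_V,K'_{\partial V};\ZZ)=M_1/\bar I$, where $M_1=\mathcal{C}_1/\text{im}(\partial_2^{bnh})$ and $\bar I\subseteq M_1$ is generated by the $t\leq\ell(\mathcal{P})$ classes $[\partial F_k]$ of the monkey-handle boundaries. By the signed-graph computation used for $E_V$, the torsion subgroup $T\subseteq M_1$ has exponent at most $2$; by the setup in Lemma \ref{smallGen}, $M_1/T$ is free of rank $p\leq 3t$, spanned by the chosen fibers $\varphi_l$, and each projected generator $\pi([\partial F_k])\in\ZZ^p$ has entries in $\{-3,\dots,3\}$ with $\ell^1$-norm (hence $\ell^2$-norm) at most $3$, exactly the per-column bound recorded in Lemma \ref{smallGen}.

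To conclude, I would apply the short exact sequence
\[0\longrightarrow (T+\bar I)/\bar I\longrightarrow M_1/\bar I\longrightarrow \ZZ^p/\pi(\bar I)\longrightarrow 0.\]
The kernel is a quotient of $T$, so has exponent at most $2$, and Hadamard's inequality bounds every full-rank $r\times r$ minor of the $p\times t$ presentation matrix of $\pi(\bar I)$ by $3^r\leq 3^t$, so the torsion of $\ZZ^p/\pi(\bar I)$ has exponent at most $3^t\leq 3^{\ell(\mathcal{P})}$. Since torsion exponents multiply across a short exact sequence, the torsion exponent of $M_1/\bar I$ is at most $2\cdot 3^{\ell(\mathcal{P})}$, and taking the least common multiple with the $E_V$ contribution gives the desired bound $T(\ell(\mathcal{P}))=2\cdot 3^{\ell(\mathcal{P})}$. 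The most delicate step I anticipate is the signed-graph verification that the non-monkey part contributes torsion of exponent at most $2$, after which the remainder is routine Smith normal form combined with Hadamard's inequality on the data already assembled in Lemma \ref{smallGen}.
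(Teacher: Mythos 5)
Your proposal reaches the correct bound $T(\ell(\mathcal{P}))=2\cdot3^{\ell(\mathcal{P})}$ and rests on the same essential numerics (sparse $\pm1/\pm2$ relations from $0$- and $1$-handles, $\ell^1\leq 3$ relations from monkey-handles, a determinantal/Hadamard bound in $3^t$, an extra factor of $2$ for non-orientability), but it is organized differently from the paper, and on one point more carefully. The paper argues component-by-component, extracts inside each component $C_V$ a \emph{maximal trivial} $I$-bundle $E_V$ with torsion-free $H_1(E_V,E_{\partial V})\cong\ZZ^s$ spanned by the fibers, writes the remaining handles as a presentation matrix over $\ZZ^s$, first kills the $\leq t$ monkey rows $A'$ (torsion $\leq 3^t$ by the gcd-of-maximal-minors bound) and then asserts that killing the sparse $A''$ rows ``at most doubles'' the torsion. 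You instead work globally, kill the non-monkey relations first to get $M_1$ with torsion $T$ of exponent $\leq 2$, and then handle the monkey relations via the short exact sequence $0\to(T+\bar I)/\bar I\to M_1/\bar I\to \ZZ^p/\pi(\bar I)\to 0$ together with Hadamard. Your ordering is in fact cleaner: the paper's intermediate claim that killing $A''$ ``at most doubles'' the torsion exponent of $\operatorname{Coker}(A')$ is not literally true -- e.g.\ $A'=(1,\,2)$ gives $\operatorname{Coker}(A')\cong\ZZ$, but adjoining the M\"obius relation $A''=(2,\,0)$ gives $\operatorname{Coker}(A)\cong\ZZ/4$, a jump by a factor of $4$ -- whereas the final bound $2\cdot3^t$ does hold, and your SES argument (torsion exponent of the middle term divides the product of the torsion exponents of the ends) proves it without that false intermediate step. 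One small imprecision: $M_1/T$ need not be free of rank exactly $p$ (some fibers $\varphi_l$ may die under $0$-handle or M\"obius relations), but its rank is $\leq p\leq3t$ and the images $\pi([\partial F_k])$ are coordinate projections of the original monkey vectors, so they still satisfy $\|\cdot\|_2\leq 3$ and the Hadamard bound goes through.

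The one genuine gap is that you assert, but do not prove, the ``signed-graph incidence'' claim that the torsion of $M_1$ (and similarly the $E_V$ contribution) has exponent $\leq 2$; you flag this yourself as the delicate step. The paper supplies exactly the missing argument via the maximal-trivial-$I$-bundle device: once $E_V$ is chosen maximal trivial, $H_1(E_V,E_{\partial V};\ZZ)\cong\ZZ^s$ is manifestly free on the fibers $[e^j]$, and each remaining $0$-handle or M\"obius $1$-handle contributes a relation $\pm[e^j]$ or $\pm2[e^j]$ with a \emph{single} nonzero coordinate, so modding out produces only $\ZZ/1$ and $\ZZ/2$ factors. If you intend to justify your signed-graph claim independently, you will need to make precise why the incidence matrix of the bands-plus-$0$-handles, viewed over the $\varphi_l$ basis, has Smith normal form with diagonal entries in $\{1,2\}$; the triviality/maximality trick is the cleanest way to see this, so I'd recommend importing it rather than reproving it from scratch.
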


\begin{proof} It suffices to show that for any component $C_V$ of $K_V$, the order of torsion
elements of $H_1(C_V,C_{\partial V};\ZZ)$ is at most $T(\ell(\mathcal{P}))$, where $C_{\partial V}
=K_{\partial V}\cap C_V$. If $C_V$ is an isolated disk, $H_1(C_V,C_{\partial V};\ZZ)$ is trivial.
Thus we may assume $C_V$ is a union of $0$-handles, $1$-handles and monkey-handles.

Let $E_V$ be a maximal union of $1$-handles in $C_V$ which  forms a trivial $I$-bundle over
a (possibly disconnected) finite graph (we also include in $E_V$ isolated edges which are not contained
in any such trivial $I$-bundle, which may be thought of as trivial $I$-bundles over isolated vertices
of the finite graph). 
Suppose $E_V=E^1_V\sqcup\cdots\sqcup E^s_V$, where
each $E^j_V$ is a connected component, and let $e^j$ be a (directed) fiber of $E^j_V$.
Clearly, $H_1(E_V,E_{\partial V};\ZZ)$, where $E_{\partial V}=E_V\cap K_{\partial V}$,
is torsion-free, rank-$s$, spanned by:
$$[e^1],\cdots,[e^s].$$   
Moreover, $H_1(C_V,C_{\partial V};\ZZ)$ is generated by these $[e^j]$'s as well.
Suppose $\sigma^1,\cdots,\sigma^r$ are the rest of the handles of $C_V$, {\it i.e.} which are not
in $E_V$. The boundary of each $\sigma^i$ gives a linear combination:
$$a_{i1}\,[e^1]+\cdots +a_{is}\,[e^s]\in H_1(E_V,E_{\partial V};\ZZ).$$
Moreover, if $\sigma^i$ is a $0$-handle or $1$-handle, there is only one
non-zero coefficient which is $\pm 1$ or $\pm 2$, respectively, (the $1$-handle
case follows from the maximality of $E_V$). If $\sigma^i$ is a monkey-handle,
the absolute value of coefficients sum up
to $3$, so in particular, at most three entries are non-zero. Thus we obtain an integral $r\times s$-matrix
$A=(a_{ij})$, which is a presentation matrix of the module $H_1(C_V,C_{\partial V};\ZZ)$,
so that at most $t$ rows have more than one non-zero entry, where $t\leq\ell(\mathcal{P})$.
We may suppose these are the first $t$ rows, forming a $t\times s$-submatrix
$A'$, and the rest of the $(r-t)$ rows form a $(r-t)\times s$-submatrix $A''$.
Note the entries of $A'$ have absolute value at most $3$, so the order of any torsion elements
of ${\rm Coker}(A')$ is bounded by the greatest common divisor of the  
minors of $A'$ of square submatrices of size $\mathrm{rank}(A')$, and hence is bounded by $3^t$. As $H_1(C_V,C_{\partial V};\ZZ)$
is the quotient of $A'$ by further killing relators given by rows of $A''$, which at most doubles
the order of the  torsion, we conclude
that the orders of torsion elements of $H_1(C_V,C_{\partial V};\ZZ)$ is
at most $2\cdot 3^t$, where $t\leq \ell(\mathcal{P})$. This completes the proof.
\end{proof}

To finish the proof of Theorem \ref{factor-hyp}, if the area of the meridian disk of $V$ is larger than $A(\ell(P))$ as in Lemma \ref{smallGen}, then $f_*:H_2(K_V,K_{\partial V})\to H_2(V,\partial V)$ vanishes. 
This amounts to requiring the radius $r$ of $V$ satisfy: $$\pi\sinh^2(r)>A(\ell(\mathcal{P})).$$
If $\gamma$ is so short that this inequality holds, by Lemma \ref{factorCriterion}, we may factorize any $f:K\to M$ up to homotopy, and hence any $\phi:G\to\pi_1(M)$, through the \hyperlink{term-extDehnFilling}{extended Dehn filling}. The denominator of the drilling is bounded by the order of the torsion of $H_1(K_V,K_{\partial V})$ by Lemma \ref{factorCriterion}. By Lemma \ref{smallTor}, the order of the torsion is bounded by $T(\ell(G))$.  This completes the proof of Theorem \ref{factor-hyp}.

\subsection{Drilling a sharp cone-fiber in a Seifert fibered piece}
In this subsection, we show a similar result to Theorem \ref{factor-hyp} for Seifert fibered \hyperlink{term-piece}{pieces}, that maps factorize through the \hyperlink{term-extDrilling}{extended drilling} of an exceptional fiber at a sharp cone point in a Seifert fibered \hyperlink{term-piece}{piece}. To make this precise, we need recall some facts about Seifert fibered spaces.

Let $J$ be an orientable compact Seifert fibered space. The interior of $J$ may be regarded as an $S^1$-bundle over a finitely generated $2$-orbifold $\mathcal{O}$. In general, $\mathcal{O}$ is isomorphic to a surface with cone points and/or punctures $F(q_1,\cdots,q_s)$, where $F$ is a closed (possibly non-orientable) surface, and each integer $1<q_i\leq\infty$ ($1\leq i\leq s$) corresponds to either a cone point on $F$ with the cone angle $\frac{2\pi}{q_i}$ if $q_i<\infty$, or a puncture if $q_i=\infty$. $\mathcal{O}$ can be endowed with a complete hyperbolic structure of finite area if and only if the orbifold Euler characteristic $\chi(\mathcal{O})=\chi(F)-\sum_{i=1}^s(1-\frac{1}{q_i})$ is negative, where $\frac{1}{\infty}=0$ by convention. In other words, in this case $J$ is either $\EE\times\HH^2$-geometric or $\widetilde{\SL}_2(\RR)$-geometric. In an orientable aspherical compact $3$-manifold $M$, any Seifert fibered \hyperlink{term-piece}{piece} $J$ whose base $2$-orbifold has a sufficiently sharp cone point (i.e. the cone angle is sufficiently small) is $\EE\times\HH^2$-geometric, unless $M$ is itself $\widetilde{\SL}_2(\RR)$-geometric. In fact, if $M$ is neither $\widetilde{\SL}_2(\RR)$-geometric nor virtually solvable, any Seifert \hyperlink{term-piece}{piece} $J$ is either $\EE\times\HH^2$-geometric or homeomorphic to the nontrivial $S^1$-bundle over a M\"obius strip.

\begin{theorem}\label{factor-SF} Let $G$ be a finitely presented group, and $M$ be an orientable aspherical compact $3$-manifold. Suppose there is a sufficiently sharp cone point in the base $2$-orbifold of 
a Seifert fibered \hyperlink{term-piece}{piece} $J\subset M$, depending only on the \hyperlink{term-prLen}{presentation length} $\ell(G)$. Let $\gamma\subset J$ be the corresponding exceptional fiber, and $N=M-\gamma$ be the drilling along $\gamma$. Then any homomorphism $\phi:G\to\pi_1(M)$ factors through the \hyperlink{term-extDehnFilling}{extended Dehn filling} epimorphism $\iota^e:\pi_1(N^e)\to \pi_1(M)$ of some denominator $m\leq T(\ell(G))$. Namely, $\phi=\iota^e\circ\phi^e$ for some $\phi^e:G\to\pi_1(N^e)$.\end{theorem}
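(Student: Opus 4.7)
The plan is to mirror the proof of Theorem \ref{factor-hyp}, replacing the hyperbolic $3$-manifold geometry by the $\EE\times\HH^2$-geometry on the Seifert fibered piece $J$. We may assume $J$ is $\EE\times\HH^2$-geometric (automatic when the cone angle is small enough, provided $M$ is neither $\widetilde{\SL}_2(\RR)$-geometric nor virtually solvable; the remaining cases are handled separately and do not admit arbitrarily sharp cone points). Equip the interior of $J$ with a finite-volume $\EE\times\HH^2$ product metric, normalized so that the regular fiber has length $L=1$, with base a hyperbolic orbifold $\mathcal{O}$. Take a triangular presentation $2$-complex $K$ realizing $\ell(G)$ and a PL map $f:K\to M$ realizing $\phi$, placed in general position with the JSJ tori and with the $0$-cell outside $J$. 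Homotope $f$ so that each $2$-simplex meeting $J$ is \emph{product-straightened}: the base projection $p\circ f:K\to\mathcal{O}$ is totally geodesic (total base area $\leq\pi\ell(G)$) and the fiber coordinate is affine in barycentric coordinates.

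The exceptional fiber $\gamma$ at a cone point of order $q$ has length $1/q$. A Margulis-type estimate for its stabilizer $\langle\phi\rangle\subset\pi_1(J)$, where $\phi:(t,x)\mapsto(t+1/q,R(x))$ with $R$ rotation by $2\pi/q$, gives displacement $\sqrt{(1/q)^2+d_{\HH^2}(x,Rx)^2}$ with $\sinh(d_{\HH^2}(x,Rx)/2)=\sinh(\rho)\sin(\pi/q)$ at hyperbolic distance $\rho$ from $\tilde\gamma$ (by the hyperbolic law of cosines). Hence for large $q$ the Margulis tube about $\tilde\gamma$ has $\HH^2$-radius $\rho\sim\log q$, and descends to an embedded fibered solid torus $V\subset J$ around $\gamma$ whose meridian disk $D_\rho\times\{t_0\}$ has area $2\pi(\cosh\rho-1)\sim\pi q$ in the product metric.

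Within $V$ each $2$-simplex has base projection contained in the disk $D_\rho$ of diameter $2\rho\sim\log q$, so geodesic edges within $V$ have $\HH^2$-length at most $2\rho$. The pullback-metric area inequality $\sqrt{\det g}\leq|\alpha\partial_v\xi-\beta\partial_u\xi|+|\partial_u\xi\wedge\partial_v\xi|$ (with $(\alpha,\beta)$ the constant fiber-gradient of norm at most $1$ and $\xi=p\circ f$) then bounds the area of each $2$-simplex restricted to $V$ by $O(\log q)$. The combinatorial decomposition of $K_V=f^{-1}(V)$ into $0$-handles, $1$-handles, and monkey-handles, exactly as in Lemma \ref{smallGen}, yields a generating set of $H_2(K_V,K_{\partial V};\QQ)$ by relative $\ZZ$-cycles with coefficients at most $27^{\ell(G)}(9\ell(G)+4)$ over the natural basis, so their total area in $V$ is at most $A'(\ell(G))\log q$ for some explicit function $A'$.

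When $q$ is large enough that $\pi q>A'(\ell(G))\log q$ (holding once $q$ exceeds a threshold depending only on $\ell(G)$), every generating class has area strictly less than the meridian disk, forcing $f_*:H_2(K_V,K_{\partial V};\QQ)\to H_2(V,\partial V;\QQ)$ to vanish. The topological Lemma \ref{factorCriterion} then applies, and the combinatorial torsion bound of Lemma \ref{smallTor} gives $m\leq T(\ell(G))$, yielding the factorization $\phi=\iota^e\circ\phi^e$. I expect the main technical obstacle to be establishing the area bound on product-straightened $2$-simplices cleanly in $\EE\times\HH^2$: unlike ideal hyperbolic triangles in $\HH^3$, these have no \emph{a priori} area bound, and control emerges only after restricting to the tube $V$ where the base has diameter $O(\log q)$ and the fiber-gradient is absolutely bounded by the normalization.
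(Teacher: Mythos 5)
Your proposal follows the same broad outline as the paper (Martin's cone estimate, the combinatorial $K_V$-decomposition, Lemmas~\ref{factorCriterion}, \ref{smallGen}, \ref{smallTor}), but diverges at the crucial area estimate, and that is where there is a gap.

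The paper's key device is \emph{horizontal area}: for a PL singular $2$-simplex $j:\Delta\to J_\geo$, one measures $\left|\,\int_\Delta j^*\omega\,\right|$, where $\omega$ is the pull-back of the $\HH^2$-area form under the projection of $\EE\times\HH^2$ (or $\widetilde{\SL}_2(\RR)$) to $\HH^2$. Since every geodesic in these geometries projects to a geodesic in $\HH^2$, a ruled $2$-simplex projects to a totally geodesic hyperbolic triangle, so its horizontal area is at most $\pi$, independently of any vertical stretching. This gives the clean, uniform bound $\pi\ell(\mathcal{P})$ for the horizontal area of the straightened $f(K)\cap J$, with no dependence on $q$. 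The horizontal area of the meridian disk of $V$ grows without bound as the cone sharpens, so the criterion $\pi\sinh^2(r)>A(\ell(\mathcal{P}))$ forces the vanishing in $H_2(V,\partial V;\QQ)$. All of Lemmas~\ref{factorCriterion}, \ref{smallGen}, \ref{smallTor} carry over verbatim with horizontal area in place of ambient area, because the estimates there only use the weighted area of the natural CW basis.

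You instead try to bound the genuine $3$-dimensional area of the restriction of each $2$-simplex to $V$, and the step that fails is the assertion that ``the fiber-gradient is absolutely bounded by the normalization.'' There is no such bound: after lifting to the universal cover $\EE\times\HH^2$, a ruled $2$-simplex whose base-vertices are a bounded hyperbolic distance apart may have arbitrarily large fiber displacement between vertices (for instance, an edge of $K$ mapping to a loop that winds $n$ times around the Seifert fiber contributes fiber displacement $n$ while its base projection has bounded length). The constant fiber-gradient $(\alpha,\beta)$ in your area formula is therefore unbounded, and the term $\int|\alpha\partial_v\xi-\beta\partial_u\xi|$ can dominate even after restricting to $V$; the $O(\log q)$ per-simplex claim does not hold, and the comparison $\pi q>A'(\ell(G))\log q$ does not close the argument. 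You correctly flagged this as the main technical obstacle, but the obstacle is real rather than merely technical; the horizontal-area reformulation is exactly the step that removes the fiber contribution and makes the Seifert fibered case go through.

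A smaller point: the paper obtains the tube from Martin's $2$-orbifold cone estimate $\cosh(r)=1/(2\sin(\pi/q))$ lifted to the Seifert fibration, rather than a $3$-dimensional Margulis-type displacement computation. The two give the same qualitative $r\sim\log q$, so this discrepancy is harmless, but the paper's route stays entirely $2$-dimensional, which is consistent with the horizontal-area philosophy.
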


The proof is almost the same as the hyperbolic case, so we only give a sketch highlighting necessary modifications. 

We may assume $J$ is either $\EE\times\HH^2$-geometric or $\widetilde{\SL}_2(\RR)$-geometric, and let $J_\geo=(\mathring{J},\rho)$ be the interior of $J$ with a complete Riemanianian metric $\rho$ of finite volume, induced by a complete hyperbolic structure on its base $2$-orbifold $\mathcal{O}$ requiring the length of any ordinary fiber to be $1$. Let $x\in\mathcal{O}$ be the cone point as assumed with cone angle $\frac{2\pi}{q}$. A result of Gaven J. Martin implies that for any complete hyperbolic $2$-orbifold $\mathcal{O}$ with a cone point of angle $\frac{2\pi}{q}$, there is an embedded cone centered at the point with radius $r$ satisfying:
$$\cosh(r)=\frac{1}{2\sin\frac{\pi}{q}},$$
which is optimal in $S^2(2,3,q)$, (cf. \cite[Theorem 2.2]{Martin96}). Applying to $x$ as above, the preimage of the embedded cone in $J_\geo$ is a tube, which will have very large radius if the cone is very sharp.

There is a natural notion of the horizontal area of a PL singular $2$-complex in $J_\geo$, heuristically the area of its projection on the base $2$-orbifold. Formally, let $\tilde{\omega}$ be the pull-back of the area form of $\HH^2$ via the natural projection $\EE\times\HH^2\to\HH^2$ or $\widetilde{\SL}_2(\RR)\to\HH^2$, which is invariant under isometry. As it is invariant under the holonomy action of $\pi_1(J_\geo)$, $\tilde{\omega}$ descends to a $2$-form $\omega$ on $J_\geo$. For any PL singular $2$-simplex $j:\Delta\to J_\geo$, we define the \emph{horizontal area} to be: $$\Area^{\tt h}(j(\Delta))=\left|\,\int_\Delta j^*\omega\,\right|,$$ and define the horizontal area of a $\RR$-coefficient PL singular $2$-chain in $J_\geo$ to be the sum of the horizontal areas of its simplices, weighted by the absolute values of coefficients.

Because for the $\EE\times\HH^2$-geometry, resp. for the $\widetilde{\SL}_2(\RR)$-geometry, any path in $J_\geo$ can be pulled straight, namely, homotoped rel end-points to a unique geodesic segment. Moreover, any immersed $2$-simplex in $J_\geo$ can be homotoped rel vertices to a ruled $2$-simplex with geodesic sides. In fact, let $\Delta=[0,1]\times[0,1]/\sim$ where $(t,0)\sim (t',0)$ for any ($0\leq t,t'\leq 1$), and $j:\Delta\to J_\geo$ be an immersion in its interior. One may first pull straight the sides by homotopy, then simultaneously homotope so that $j(\seq{t}\times[0,1])$ becomes geodesic for every $0\leq t\leq 1$. Note every geodesic in $\EE\times\HH^2$, resp. in $\widetilde{\SL}_2(\RR)$, projects to a geodesic in $\HH^2$, it is clear that any ruled $2$-simplex (lifted) in $\EE\times\HH^2$, resp. in $\widetilde{\SL}_2(\RR)$, projects to a totally geodesic triangle in $\HH^2$. This implies any ruled $2$-simplex in $J_\geo$ has horizontal area at most $\pi$. More generally, ruled triangular $2$-complexes in $J_\geo$ with $m$ $2$-simplices have horizontal area at most $m\pi$.

To prove Theorem \ref{factor-SF}, pick a presentation $2$-complex $K$ of a triangular presentation $\mathcal{P}$ achieving $\ell(G)$, and a PL map $f:K\to M$ realizing $\phi$. 
By picking a sufficiently small $\epsilon<\epsilon_2$, we may endow $M$ with a complete Riemannian metric such that $J$ is isometric to $J_\geo$ with the $\epsilon$-thin tubes corresponding to $\partial J$ removed. We pull the part of $f^{-1}(J)$ straight, namely, homotope it rel $f^{-1}(\partial J)$ to a ruled $2$-complex. If $\epsilon$ is sufficiently small, we may assume the horizontal area of $f^{-1}(J)$ to be at most $\pi\ell(\mathcal{P})$ by the discussion above.

Suppose $\gamma$ is the singular fiber in $J$ with sufficiently small cone angle, then there is an embedded tube $V\subset J$ containing $\gamma$ with sufficiently large radius. Since $V$ is convex and the simplices meeting $V$ are ruled, it is easy to see that $K_V=f^{-1}(V)$ is a finite union of $0$-handles, $1$-handles and monkey-handles and possible a few isolated disks. The number of monkey-handles is at most the number of simplices $\ell(P)$. Let $K_{\partial V}=f^{-1}(\partial V)$. Now the horizontal area of $K_V$ is at most $\pi\ell(\mathcal{P})$.

The factorization criterion in Lemma \ref{factorCriterion} is a general fact which also applies here.
By the same argument as Lemma \ref{smallGen}, $H_2(K_V,K_{\partial V};\QQ)$ has a generating set whose elements are represented by relative $\ZZ$-cycles with horizontal area bounded by $27^{\ell(\mathcal{P})}(9\ell(\mathcal{P})+4)\cdot\Area^{\tt h}(K_V)\leq
27^{\ell(\mathcal{P})}(9\ell(\mathcal{P})^2+4\ell(\mathcal{P}))\pi$. Thus, if $\gamma$ is an exceptional fiber with the corresponding cone angle sufficiently small such that it has a tubular neighborhood $V\subset M$ of radius $r$ satisfying: $$\pi\sinh^2(r)>A(\ell(\mathcal{P})),$$ where
$A(n)=27^n(9n^2+4n)\pi$, $f_*:H_2(K_V,K_{\partial V})\to H_2(V,\partial V)$ vanishes. This implies $f:K\to M$ factors through the \hyperlink{term-extDehnFilling}{extended Dehn filling} $i^e:N^{e(\zeta,m)}\to M$ up to homotopy,  where denominator $m$ of the drilling is bounded by the order of the torsion of $H_1(K_V,K_{\partial V})$ by Lemma \ref{factorCriterion}. By Lemma \ref{smallTor}, the order of the torsion is bounded by $T(\ell(G))$. 
This completes the proof of Theorem \ref{factor-SF}.

\section{A bound of the simplicial volume}\label{Sec-volume}
In this section, we give an upper-bound of the volume of $M$ in terms of $G$, under the assumptions of Theorem \ref{main}. This gives some restrictions to the geometry of the hyperbolic \hyperlink{term-piece}{pieces} of $M$, which will be useful in Section \ref{Sec-homeoTypes}. For the purpose of certain independent interest, we prefer to prove a slightly more general result, allowing $M$ to be an compact orientable aspherical $3$-manifold with tori boundary. 

For any compact orientable manifold $M$ with tori boundary, we denote the simplicial volume of $M$ as $v_3\norm{M}$, where $v_3\approx 1.01494$ is the volume of an ideal regular hyperbolic tetrahedron and $\norm{M}$ stands for the Gromov norm. We prove the following theorem.

\begin{theorem}\label{volBound} Suppose $G$ is a finitely presented group with $b_1(G)=1$, and $M$ is an orientable compact aspherical $3$-manifold with (possibly empty) tori boundary. If $G$ maps onto $\pi_1(M)$, then: $$v_3\norm{N}\leq\pi\ell(G).$$
\end{theorem}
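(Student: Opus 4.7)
The plan is to exhibit $v_3\norm{M}$ as a sum of hyperbolic volumes over the JSJ decomposition of $M$ and then bound each summand by a Cooper-type straightening argument applied to a presentation $2$-complex for $G$.

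\textbf{Step 1 (Soma additivity).} By Soma's extension of Thurston's Gromov-norm computations, for an orientable aspherical compact $3$-manifold with (possibly empty) tori boundary, the simplicial volume splits additively over the JSJ decomposition:
\[
v_3\norm{M} \;=\; \sum_{J} \Vol(J^\circ),
\]
where $J$ ranges over the hyperbolic JSJ \hyperlink{term-piece}{pieces} of $M$ and $J^\circ$ denotes the interior equipped with its unique complete finite-volume hyperbolic metric (which exists by Mostow rigidity, since $\partial J$ is a union of tori).

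\textbf{Step 2 (Global straightening).} Choose a triangular presentation $\mathcal{P}$ of $G$ realizing $\ell(G)$, let $K$ be its presentation $2$-complex, and realize $\phi$ by a PL map $f:K\to M$. Endow $M$ with a Riemannian metric in which each hyperbolic piece $J$ is isometric to $J^\circ$ with its $\epsilon$-thin horocusps removed, for $\epsilon$ smaller than the Margulis constant $\epsilon_3$. Put $f$ in minimal normal position with the JSJ tori and then straighten each $2$-simplex of $K$ to be totally geodesic on every hyperbolic piece it threads, exactly as in the setup of Theorem \ref{factor-hyp}. The monkey-handle count recorded there gives
\[
\sum_J \Area\bigl(f(K)\cap J\bigr) \;\le\; \pi\,\ell(\mathcal{P}) \;=\; \pi\,\ell(G),
\]
since each $2$-simplex of $K$ contributes at most one monkey-handle of area $\le\pi$ (the area of an ideal hyperbolic triangle) to each hyperbolic piece, and $1$- and $0$-handles contribute negligibly.

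\textbf{Step 3 (Per-piece volume bound and summation).} For each hyperbolic piece $J$, I would establish $\Vol(J^\circ) \le \pi \cdot (\text{monkey-handle count in }J)$ by a Cooper-style coning argument in $J^\circ$: extend the straightened $f|_{f^{-1}(J)}$ over a presentation $3$-complex, straighten the resulting $3$-cells to ideal hyperbolic tetrahedra with the straightened $2$-cells as bases, and use the standard $\HH^3$ estimate bounding the volume of a straight tetrahedron in terms of the areas of its faces. Summing the per-piece inequalities and combining with Step 1 gives $v_3\norm{M} = \sum_J \Vol(J^\circ) \le \pi\,\ell(G)$.

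The main obstacle is Step 3, because $f:K\to M$ is only $\pi_1$-surjective onto $M$, not onto the individual piece $J$, whereas Cooper's original argument for a closed hyperbolic target leverages $\pi_1$-surjectivity of $K$ onto that target. The natural fix is to pass to the cover $M_J\to M$ corresponding to $\pi_1(J)\le \pi_1(M)$, use the Scott core machinery of Subsection \ref{Subsec-Scott} to obtain a finite aspherical core in $M_J$ whose hyperbolic \hyperlink{term-chunk}{chunk} is a core of $J$, and apply Cooper's cusped bound to the lifted straightened complex there. The hypothesis $b_1(G)=1$ enters precisely to force $b_1$ of the relevant covers to be at most $1$, giving the homological control required for the $3$-cell extension to exist and for the lifted area not to exceed $\Area(f(K)\cap J)$.
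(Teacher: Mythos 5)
Your Steps 1 and 2 match the paper's setup exactly (Soma additivity plus straightening of a triangular presentation complex), but Step 3 takes a genuinely different route from the paper's and, as written, has a gap that I don't see how to close.

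The fix you propose for Step 3 --- passing to the cover $M_J\to M$ corresponding to $\pi_1(J)\le\pi_1(M)$ and running Cooper's coning argument there --- doesn't get off the ground. First, $f:K\to M$ does not lift to $M_J$: lifting requires $f_\sharp(\pi_1(K))\subset\pi_1(J)$, whereas $f_\sharp$ is \emph{onto} $\pi_1(M)$. Only the restriction $f|_{f^{-1}(J)}$ lifts, and $f^{-1}(J)$ is a disjoint union of handle-pieces of $K$, not a presentation complex for any group. Second, the claim that $b_1(G)=1$ ``forces $b_1$ of the relevant covers to be at most $1$'' is incorrect: $b_1(M_J)=b_1(J)$ is an intrinsic property of the piece $J$ (it is at least the number of cusps of $J$), and $b_1(G)$ exerts no control on it. Third, even putting aside the covers, a coning argument bounds the volume \emph{covered} by the straightened $3$-cells; since $f(K)\cap J$ need not be $\pi_1$-surjective onto $J$, or even carry a degree-one class, nothing forces the cones to fill $J^\circ$, and the per-piece inequality $\Vol(J^\circ)\le\pi\cdot(\text{monkey-handle count in }J)$ does not follow.

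The paper closes exactly this gap by a different mechanism, and the hypothesis $b_1(G)=1$ is used in a different place than you predict. Rather than trying to make $f$ surjective onto $J$, the paper proves that the \emph{complement} $M-f(K)$ consists of elementary components, i.e.\ components whose inclusion into $M$ has abelian $\pi_1$-image (Proposition~\ref{elemCplment}). This is where $b_1(K)=b_1(G)=1$ is used, via a drilling argument and the Scott-core machinery (Lemma~\ref{notTiny}): if some complementary region were non-elementary one could drill a non-elementary graph from it and obtain a cover with $b_1>1$ dominated by $K$, a contradiction. Once the complementary regions in a hyperbolic piece $J_i$ are known to be elementary, the hyperbolic isoperimetric inequality (Lemma~\ref{isopIneq}) bounds each region's volume by half its boundary area; summing over the regions and letting $\epsilon\to 0$ gives $\Vol(J_{i,\geo})\le\pi m_i$ where $m_i$ is the monkey-handle count in $J_i$. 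Put differently, the paper does not try to dominate $J^\circ$ by cones over $f(K)\cap J$; it dominates the part of $J$ \emph{missed} by $f(K)$ using the elementary-complement property. Your proposal would need that observation, or some equally effective substitute for it, to make the per-piece bound in Step 3 go through.
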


More generally, one may assume that $G$ is only finitely 
generated with $b_1(G)=1$ in this theorem, since any such group is the quotient of a finitely presented group $G' \twoheadrightarrow G$ with $b_1(G')=1$. 
We shall prove Theorem \ref{volBound} in the rest of this section. 

The idea is as follows. First take a finite $2$-complex $K$ realizing a triangular presentation $\mathcal{P}$ which achieves $\ell(G)$. 
Take a PL map $f:K\to M$ realizing an epimorphism $\phi:G\twoheadrightarrow\pi_1(M)$. We first show that $M-f(K)$ consists of elementary components, in the sense that the inclusion of any such component induces a homomorphism on $\pi_1$ with abelian image. By `pulling straight' $f$ in hyperbolic \hyperlink{term-piece}{pieces} of $M$ via homotopy, we may apply an isoperimetric inequality to bound the sum of their volumes by $\pi\ell(\mathcal{P})$. Then the theorem follows because $v_3\norm{M}$ is equal to the sum of the volume of hyperbolic \hyperlink{term-piece}{pieces}.

We first show $M-f(K)$ consists of elementary components. The approach we are taking here is a `drilling argument' which will also be used to prove Theorem \ref{main}.

\begin{prop}\label{elemCplment} Let $K$ be a finite $2$-complex with $b_1(K)=1$, and $M$ be an orientable compact aspherical $3$-manifold. Suppose $f:K\to M$ is a PL map (with respect to any PL structures of $K$, $M$) which induces an epimorphism on the fundamental group. Then $M-f(K)$ consists of elementary components, i.e. whose inclusion into $M$ has abelian $\pi_1$-image. \end{prop}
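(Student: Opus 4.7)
Plan. My approach combines a Van Kampen / graph-of-groups argument with the long exact sequence of the pair $(M,N(f(K)))$. After a small homotopy of $f$, let $N=N(f(K))$ be a regular neighborhood and write $\overline{M\setminus N}=V_1\sqcup\cdots\sqcup V_k$, so that the components of $M-f(K)$ correspond bijectively to the $V_i$. Each $V_i$ is a compact orientable $3$-manifold whose boundary splits as $\partial V_i=\partial^{\mathrm{in}}V_i\sqcup\partial^{\mathrm{out}}V_i$, where $\partial^{\mathrm{in}}V_i\subset\partial N$ and $\partial^{\mathrm{out}}V_i=V_i\cap\partial M$. Using $\pi_2(M)=0$ together with Dehn's lemma and the loop theorem, I would first compress so that every component of $\partial N$ is $\pi_1$-injective in both $N$ and the adjacent $V_i$.

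Next, applying Van Kampen to $M=N\cup\bigcup V_i$ realizes $\pi_1(M)$ as the fundamental group of a graph of groups with vertex groups $\pi_1(N),\pi_1(V_i)$ and edge groups $\pi_1(S)$ for $S$ a component of $\partial N$. Since $f_*\colon\pi_1(K)\twoheadrightarrow\pi_1(M)$ factors through $\pi_1(N)$, the inclusion $\pi_1(N)\to\pi_1(M)$ is surjective, hence (by the $\pi_1$-injectivity achieved in the previous step) an isomorphism. Running the induced Bass-Serre action on the dual tree, every element of $\pi_1(V_i)$ must fix both the $V_i$- and $N$-vertices, and hence some edge between them. Consequently the image of $\pi_1(V_i)\to\pi_1(M)$ lies inside the image of $\pi_1(S)\to\pi_1(M)$ for some component $S$ of $\partial^{\mathrm{in}}V_i$.

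Finally, I would invoke the long exact sequence of $(M,N)$ over $\QQ$. The surjection $H_1(N;\QQ)\twoheadrightarrow H_1(M;\QQ)$ (coming from $b_1(K)=1$ and $f_*$ being $\pi_1$-surjective) yields $H_1(M,N;\QQ)=0$ and the elementary bound
\[
\dim_\QQ H_2(M,N;\QQ) \;\le\; b_2(M)+b_1(f(K))-b_1(M),
\]
which is small thanks to $b_1(f(K))\le b_1(K)=1$. By excision and Poincar\'e--Lefschetz duality, $H_2(M,N;\QQ)=\bigoplus_i H_2(V_i,\partial^{\mathrm{in}}V_i;\QQ)\cong\bigoplus_i H^1(V_i,\partial^{\mathrm{out}}V_i;\QQ)$. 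Combining this rank estimate with the Van Kampen reduction above --- and using that $\partial M$ is empty or toroidal in the intended applications --- forces the chosen boundary component $S$ of $\partial^{\mathrm{in}}V_i$ to be a sphere or a torus; thus $\pi_1(S)\to\pi_1(M)$ has trivial or $\ZZ^2$ image, so $\pi_1(V_i)\to\pi_1(M)$ has abelian image as required.

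The hard part is the last paragraph: carefully translating the homological rank bound into a genus bound on each $S$ in the presence of toroidal $\partial M$, since the relative Lefschetz duality pairs $\partial^{\mathrm{in}}$ with $\partial^{\mathrm{out}}$ and requires bookkeeping of peripheral tori lying in individual $V_i$. The ``drilling'' character of the argument, which will be reused for the main theorem, is the observation that if some $V_i$ had non-abelian $\pi_1$-image in $\pi_1(M)$, then morally drilling it out would manufacture extra homology that the constraint $b_1(K)=1$ cannot accommodate.
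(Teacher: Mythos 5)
Your approach is genuinely different from the paper's, and unfortunately it has a gap that I don't see how to close. The first step — compressing $\partial N$ so that every component is $\pi_1$-injective into \emph{both} $N$ and the adjacent $V_i$ — is not achievable in general. Compressions toward the $V_i$-side enlarge $N$ and eventually make $\partial N$ incompressible in $\overline{M\setminus N}$, but $\partial N$ can still be compressible in $N$ (and you cannot compress into $N$ without risking hitting $f(K)$). Consider, for instance, $f(K)$ a spine of a Heegaard handlebody $N$ in a closed $M$: the Heegaard surface is compressible on \emph{both} sides, and no sequence of one-sided compressions fixes this. Without injectivity on both sides there is no graph-of-groups decomposition with edge groups $\pi_1(S)$, so ``$\pi_1(N)\to\pi_1(M)$ is an isomorphism'' and the Bass--Serre step (``$\pi_1(V_i)$ factors through some $\pi_1(S)$'') do not follow. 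Your final paragraph, which you yourself flag as the hard part, is also not carried out: the rank bound on $H_2(M,N;\QQ)$ does not obviously force each inner boundary component to be a sphere or torus, and the claimed reduction ``image of $\pi_1(S)$ abelian $\Rightarrow$ image of $\pi_1(V_i)$ abelian'' still depends on the broken factorization step.

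The paper's proof uses a quite different mechanism. It assumes some complementary component $U$ of $M-f(K)$ is non-elementary, embeds a finite graph $\Gamma\subset U$ that is non-elementary in $M$, and drills to get the aspherical manifold $N=M-\Gamma$. It then passes to the cover $\tilde N\to N$ corresponding to $\mathrm{Im}(f'_\sharp)$, so that $b_1(\tilde N)\le b_1(K)=1$. The contradiction comes from showing $\chi(\tilde N)<0$ (Lemma~\ref{notTiny}), hence $b_1(\tilde N)>1$; this uses the Scott core machinery of Proposition~\ref{ScottCore} and Lemma~\ref{simplifyCore}, the JSJ decomposition of $N$, and the fact that the piece containing the drilling cusp is hyperbolic and contributes $\chi<0$. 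In short: the paper never takes a regular neighborhood of $f(K)$ or builds a graph-of-groups over $\partial N$; it drills out a witness $\Gamma$ and detects the contradiction cohomologically in a cover. If you want to salvage your route, you would need a way to control the compressibility of $\partial N$ on the $N$-side, or replace the graph-of-groups step with something that tolerates non-injective edge groups; the drilling argument sidesteps both problems.
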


\begin{proof} 
This is trivial if $M$ is itself elementary. We shall assume $M$ to be non-elementary without loss of generality.

To argue by contradiction, suppose there is a non-elementary component $U$ of $M-f(K)$. We may take an embedded finite connected simplicial graph $\Gamma\subset U$ such that $\Gamma$ is non-elementary in $M$, i.e. $\pi_1(\Gamma)\to \pi_1(M)$ has non-abelian image. Let $N=M-\Gamma$. Observe that $N$ is aspherical, because if there is an embedded sphere in $N$, it bounds a ball in $M$. This ball cannot contain $\Gamma$ as $\Gamma$ is non-elementary, and hence the ball is contained in $N$. Thus, $N$ is irreducible, and therefore aspherical by the Sphere Theorem \cite{Papa}. Denote the induced map:
$$f':K\to N.$$ By picking base points of $K$ and $N$, there is an induced homomorphism $f'_\sharp:\pi_1(K)\to\pi_1(N)$. Note ${\rm Im}(f'_\sharp)\leq\pi_1(N)$ is in general of infinite index as $b_1(K)=1$ and $b_1(N)>1$. We consider the covering space $\tilde{N}$ of $N$ corresponding to ${\rm Im}(f'_\sharp)$, with the covering map:
$$\kappa:\tilde{N}\to N.$$
Assume we can prove $\chi(\tilde{N})<0$, and hence $b_1(\tilde{N})>1$ at this point, then we obtain a contradiction because $b_1(K)=1$ and $\pi_1(\tilde{N})\cong{\rm Im}(f'_\sharp)$. We shall show $\chi(\tilde{N})<0$ in a separate lemma, (Lemma \ref{notTiny}), and with that done, the proof is completed.
\end{proof}

\begin{lemma}\label{notTiny} With the assumptions in the proof of Proposition \ref{elemCplment}, $$\chi(\tilde{N})<0.$$\end{lemma}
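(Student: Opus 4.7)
The plan is to analyze $\tilde{N}$ via its Scott core. Since $\tilde{N}$ covers the aspherical, irreducible $N$, it is itself aspherical and irreducible, and its fundamental group $H := \mathrm{Im}(f'_\sharp)$ is finitely generated as a quotient of $\pi_1(K)$. Scott's core theorem together with McCullough's refinement gives a compact aspherical Scott core $C \subset \tilde{N}$ with $\pi_1(C) \cong H$, arranged so that $C \cap \partial\tilde{N}$ is a compact subsurface realizing a Scott core of each intersected boundary component. After absorbing sphere components of $\partial C$ via irreducibility and ensuring incompressibility of the interior piece $\partial_1 C := \partial C \cap \mathrm{int}(\tilde{N})$, every component of $\partial_1 C$ contributes $\chi \leq 0$. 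Note also that $H$ surjects onto $\pi_1(M)$: the composition $\pi_1(K) \twoheadrightarrow H \hookrightarrow \pi_1(N) \twoheadrightarrow \pi_1(M)$ is the original $\pi_1$-surjection $f_\sharp$, and $\pi_1(M)$ is non-abelian since $M$ is non-elementary.

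Since $\chi(C) = \chi(\partial C)/2$ and $\chi(\partial C) = \chi(\partial_0 C) + \chi(\partial_1 C)$ (with $\partial_0 C := C \cap \partial\tilde{N}$ meeting $\partial_1 C$ along circles), it suffices to produce a component of $\partial_0 C$ of strictly negative Euler characteristic. The key input is that $\Gamma$ is non-elementary, so (taking $\Gamma$ as a wedge of two loops with non-commuting images in $\pi_1(M)$) a regular neighborhood $V(\Gamma)$ is a genus-$g$ handlebody with $g := b_1(\Gamma) \geq 2$, and the drilled boundary component $\Sigma_g = \partial V(\Gamma)$ of $N$ is a closed orientable surface of genus at least two. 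In particular no cover of $\Sigma_g$ is a torus, so any component $\tilde{\Sigma} \subset \kappa^{-1}(\Sigma_g)$ whose cover subgroup $H \cap h\pi_1(\Sigma_g)h^{-1}$ is non-cyclic meets $C$ in a subsurface of $\chi \leq -1$ by the Scott core construction (analogously to Proposition \ref{ScottCore}).

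The heart of the argument is to locate some $h \in \pi_1(N)$ with $H \cap h\pi_1(\Sigma_g)h^{-1}$ non-cyclic. Choose based loops $\alpha, \beta \subset \Gamma$ with non-commuting images in $\pi_1(M)$; their longitudes $\lambda_\alpha, \lambda_\beta \in \pi_1(\Sigma_g) \leq \pi_1(N)$ have these same non-commuting images in $\pi_1(M) = \pi_1(N)/\langle\langle\mathrm{meridians}\rangle\rangle$. Since $H \twoheadrightarrow \pi_1(M)$, there exist lifts $\tilde\alpha, \tilde\beta \in H$ with matching images, each differing from the corresponding longitude by an element in the normal closure of meridians. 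A group-theoretic argument using the amalgam $\pi_1(M) = \pi_1(N) *_{\pi_1(\Sigma_g)} \pi_1(V(\Gamma))$ (equivalently, the Dehn-filling presentation) then should produce $H$-elements conjugate into a common copy of $\pi_1(\Sigma_g)$ whose images still fail to commute, giving a non-cyclic intersection $H \cap h\pi_1(\Sigma_g)h^{-1}$. Combined with the Scott core construction above, the corresponding component $\tilde\Sigma_0$ yields a piece of $\partial_0 C$ of $\chi < 0$, so $\chi(\partial C) < 0$ and $\chi(\tilde{N}) = \chi(C) < 0$.

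The hard part is this last group-theoretic step: ensuring that the surjection $H \twoheadrightarrow \pi_1(M)$ is witnessed by $H$-elements lying non-cyclically in some conjugate of $\pi_1(\Sigma_g) \leq \pi_1(N)$, rather than being achievable purely via elements of $H$ that bypass all peripheral conjugates. This is where the specific topology of the drilling $N = M - \Gamma$ and the normal-closure structure of the meridians must enter in an essential way. As a sanity check, the easier case $[\pi_1(N):H] < \infty$ already follows from multiplicativity of $\chi$: since $\chi(N) = \chi(M) + 1 - g = 1 - g \leq -1$, we get $\chi(\tilde{N}) = [\pi_1(N):H]\,\chi(N) < 0$ directly, so the real content is concentrated in the infinite-index case.
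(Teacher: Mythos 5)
Your proposal has two problems, one acknowledged and one not.

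The acknowledged gap is the group-theoretic step: you write that an argument ``should produce'' $H$-elements conjugate into a common copy of $\pi_1(\Sigma_g)$ with non-commuting images, but you do not prove this, and you correctly flag it as the hard part. Unfortunately the claim you are trying to prove there is not true in general. If $H = \mathrm{Im}(f'_\sharp)$ happens to be free of rank $\geq 2$ (which is perfectly possible, e.g.\ when $\pi_1(K)$ is free and $f'_\sharp$ is injective), then the Scott core $C$ of $\tilde{N}$ is a handlebody, $\chi(C) = 1 - \mathrm{rank}(H) < 0$ automatically, but the negative Euler characteristic lives entirely on the interior boundary $\partial_1 C$. There is no reason for $C$ to meet $\kappa^{-1}(\Sigma_g)$ at all, and in general $H$ can intersect every conjugate of $\pi_1(\Sigma_g)$ trivially or cyclically (a Schottky-type configuration). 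So the $\chi < 0$ contribution cannot always be located at a cover of $\Sigma_g$ as your strategy requires, and the lemma you are trying to formulate about $H \cap h\pi_1(\Sigma_g)h^{-1}$ is false. A smaller issue: $\pi_1(M) = \pi_1(N) *_{\pi_1(\Sigma_g)} \pi_1(V(\Gamma))$ is not an amalgamated product in the usual sense since $\pi_1(\Sigma_g) \to \pi_1(V(\Gamma))$ has large kernel; the pushout structure is not as tractable as that phrasing suggests.

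The paper's route is different and avoids this. Under the contradiction hypothesis $b_1(\tilde{N}) = 1$, it invokes the JSJ-adapted Scott core of Proposition \ref{ScottCore} together with Lemma \ref{simplifyCore}, so $C$ decomposes into chunks glued along essential annuli and tori with no contractible or non-central elementary chunk. The drilled boundary $\Sigma_g$ sits inside a \emph{hyperbolic} JSJ piece $Y$ of $N$ with $\chi(Y) < 0$, and the crux is a topological argument: if no chunk of $C$ covered $Y$, then $f$ could be homotoped in $M$ to miss the interior of $Y_\Gamma = Y \cup (\text{component of } M - Y \text{ containing } \Gamma)$, and an analysis of whether $\partial Y_\Gamma$ is compressible or incompressible produces a contradiction to $\pi_1$-surjectivity of $f$ or to $\Gamma$ being non-elementary. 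This yields a non-elementary hyperbolic chunk $Q$, whence $\chi(C) \leq \chi(Q) < 0$. The essential missing idea in your write-up is this ``cannot homotope off $Y_\Gamma$'' argument, which is what forces the core to pick up negative Euler characteristic without needing $H$ to meet a peripheral conjugate of $\pi_1(\Sigma_g)$ non-cyclically. Your finite-index sanity check is correct but, as you note, the content is all in the infinite-index case.
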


\begin{proof}  Let $\tori$ be the JSJ tori of $N$. Note $\partial M$ is at most a torus under the assumption that $f:K\to M$ is $\pi_1$-surjective and $b_1(K)=1$. Consider the JSJ decomposition of $N$. Then the \hyperlink{term-piece}{piece} $Y$ containing the component of $\partial N$ coming from drilling $\Gamma$ is necessarily hyperbolic, and $\chi(Y)<0$. Let $Y_\Gamma\subset M$ be the union of $Y$ with the component of $M-Y$ that contains $\Gamma$. To argue by contradiction, suppose $b_1(\tilde{N})=1$.

By Proposition \ref{ScottCore} $\tilde{N}$ has an aspherical Scott core $C$
such that $C\cap\kappa^{-1}(\tori)$ are essential annuli and/or tori. Moreover, by Lemma \ref{simplifyCore}, $C$ has no non-central elementary \hyperlink{term-chunk}{chunk}, in particular, no elementary hyperbolic \hyperlink{term-chunk}{chunk}.
We claim that $C$ contains a hyperbolic \hyperlink{term-chunk}{chunk} $Q$ mapping to $Y$ under $C\subset\tilde{N}\stackrel{\kappa}\to N$.

To see this, note $f':K\to N$ factorizes as: $$K\stackrel{\tilde{f}'}{\longrightarrow} C\stackrel{\subset}{\longrightarrow} \tilde{N}
\stackrel{\kappa}{\longrightarrow} N,$$
up to homotopy. If $C$ had no hyperbolic \hyperlink{term-chunk}{chunk} mapping to $Y$, $f'$ would miss the interior of $Y_\Gamma$ up to homotopy.
Then $f:K\to M$ may be homotoped to $g:K\to M$ within  $N$ such that $g(K)$ misses the interior of $Y_\Gamma$. Clearly $\partial Y_\Gamma$ has some component which is not parallel to $\partial M$, because otherwise either $\Gamma$ or $g(K)$ is contained in a collar neighborhood of $\partial M$. This either contradicts $\Gamma$ being non-elementary, or contradicts $f\simeq g$ being $\pi_1$-surjective as $M$ is assumed to be non-elementary. Let $T$ be such a component of $\partial Y_\Gamma$. $T$ cannot be incompressible in $M$, otherwise $g(K)$ is not surjective by the Van Kampen theorem or the HNN extension. 
If $T$ is compressible, let $D\subset M$ be a compressing disk of $T$. One component of $\partial W$, where $W$ is a regular neighborhood of $D\cup T$, is a sphere $S\subset M$, which must bound a ball $B\subset M$. There are four cases:
if $D\subset Y_\Gamma$ and $B\subset Y_\Gamma$, then $Y_\Gamma$ is a solid torus containing $\Gamma$, which contradicts $\Gamma$ being non-elementary; if $D\subset Y_\Gamma$ and $B\subset (M\setminus\mathring{Y}_\Gamma)\cup W$, then $B$ contains $g(K)$, which contradicts $g$ being $\pi_1$-surjective; if $D\subset M\setminus\mathring{Y}_\Gamma$ and $B\subset M\setminus\mathring{Y}_\Gamma$, then $M\setminus\mathring{Y}_\Gamma$ is a solid torus containing $g(K)$, which contradicts the assumption that $M$ is non-elementary; if $D\subset M\setminus\mathring{Y}_\Gamma$ and $B\subset W\cup Y_\Gamma$, then $B$ contains $\Gamma$, which contradicts $\Gamma$ being non-elementary. This means $T$ cannot be compressible either. This contradiction proves the claim that $C$ must have some hyperbolic \hyperlink{term-chunk}{chunk} $Q$.

Now $Q$ is a non-elementary hyperbolic \hyperlink{term-chunk}{chunk} of $C$ by Lemma \ref{simplifyCore}. Let $\tilde{Y}$ be the component of $\kappa^{-1}(Y)$ containing $Q$. We have $\pi_1(Q)\cong\pi_1(\tilde{Y})\leq\pi_1(Y)$ is a non-elementary subgroup of $\pi_1(Y)$. Since $\chi(Y)<0$, we conclude $\chi(Q)<0$. Because $C$ cuts along annuli and tori into non-contractible \hyperlink{term-chunk}{chunks}, $$\chi(\tilde{N})=\chi(C)\leq\chi(Q)<0.$$ This implies $b_1(\tilde{N})>1$ as $H_n(\tilde{N})=0$ for $n>2$, a contradiction to the assumption that $b_1(\tilde{N})=1$.
\end{proof}

Let $J_1,\cdots, J_s$ ($s\geq0$) be the hyperbolic \hyperlink{term-piece}{pieces} in the JSJ decomposition of $M$ as assumed in Theorem \ref{volBound}. As before, we write $J_{i,\,\geo}=(\mathring{J}_i,\rho_i)$ for the interior of $J_i$ with the complete hyperbolic metric of finite volume. It is a well-known fact that only hyperbolic \hyperlink{term-piece}{pieces} contribute to the simplicial volume, namely, $$v_3\norm{M}=\sum_{i=1}^s\Vol(J_{i,\,\geo}),$$
cf. \cite[Theorem 1]{So}. Therefore, to prove Theorem \ref{volBound}, it suffices to bound the volume of hyperbolic \hyperlink{term-piece}{pieces}
of $M$, assuming $s>0$.

By picking a positive $\epsilon<\epsilon_3$, where $\epsilon_3$ is the Margulis constant for $\HH^3$, we may endow $M$ with a complete Riemannian metric so that $J_i$ is isometric to $J_{i,\,\geo}$ with the $\epsilon$-thin horocusps corresponding to $\partial J_i$ removed, (remember the JSJ \hyperlink{term-piece}{pieces} are the components of $M$ with an open regular neighborhood of the JSJ tori removed). 

Remember $K$ is a finite $2$-complex with a single base point $*$ and $2$-simplices corresponding to the relators of the triangular presentation $\mathcal{P}$. If $\epsilon>0$ is sufficiently small, we may homotope $f$ so that $*$ is not in any hyperbolic \hyperlink{term-piece}{piece}, and that $f(K)\cap J_i$ is totally geodesic on each $2$-simplex of $K$. As $K_{J_i}=f^{-1}(J_i)$ is a union of $1$-handles (bands) and at most $\ell({\mathcal{P}})$ monkey-handles (hexagons), we may bound the area of $K_{J_i}$ by $\pi m_i$, where $m_i$ is the number of monkey-handles in $K_{J_i}$, if $\epsilon>0$ is sufficiently small. Note $m_1+\cdots+m_s\leq\ell(\mathcal{P})$. Moreover, the area of $\partial J_i$ is bounded by $\frac{\epsilon^2A_i}{\epsilon_3^2}$, where $A_i$ is total area of the $\epsilon_3$-horocusp boundaries of $J_{i,\,\geo}$ corresponding to $\partial J_i$.

We need an isoperimetric inequality as below at this point (cf. \cite[Lemma 3.2]{Rafalski07}). 

\begin{lemma}\label{isopIneq} Let $Y$ be a hyperbolic $3$-manifold, and $R\subset Y$ be a connected compact PL sub-$3$-manifold. If $R$ is elementary in $Y$, then $$\Vol(R)\leq \frac{1}{2}\Area(\partial R).$$ \end{lemma}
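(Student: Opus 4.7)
The plan is to apply the divergence theorem with a carefully chosen $H$-invariant vector field on $\HH^3$, where $H=\mathrm{Im}(\pi_1(R)\to\pi_1(Y))$ is the abelian subgroup provided by the elementary hypothesis. Since $\pi_1(Y)<\mobgp$ is torsion-free discrete, the possibilities for $H$ are: trivial, infinite cyclic loxodromic (preserving a geodesic axis $\ell\subset\HH^3$), infinite cyclic parabolic, or $\ZZ\oplus\ZZ$ parabolic (fixing a point of $\partial_\infty\HH^3$).

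For each type I construct a smooth $H$-invariant vector field $X$ on $\HH^3$ with $|X|\leq 1$ and $\mathrm{div}(X)\geq 2$:
\begin{enumerate}
\item[(i)] If $H$ is trivial, choose a basepoint, let $r$ be hyperbolic distance from it, and set $X=\tanh(r)\,\nabla r$. Then $|X|=\tanh(r)\leq 1$, and since $\mathrm{div}(\nabla r)=2\coth(r)$ one computes $\mathrm{div}(X)=2+\operatorname{sech}^2(r)\geq 2$; the field extends smoothly through the basepoint because $\tanh(r)\sim r$.
\item[(ii)] If $H$ is loxodromic with axis $\ell$, let $r$ be the distance to $\ell$ and set $X=\tanh(r)\,\nabla r$. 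In Fermi coordinates $ds^2=dr^2+\cosh^2(r)\,dl^2+\sinh^2(r)\,d\theta^2$ one finds $\mathrm{div}(\nabla r)=2\coth(2r)$, and then the identity $2\tanh(r)\coth(2r)=1+\tanh^2(r)$ gives $\mathrm{div}(X)=2$ identically.
\item[(iii)] If $H$ is parabolic (cyclic or of rank $2$), use the upper half-space model $\HH^3=\{(x,y,t):t>0\}$ with $H$ fixing $\infty$, and take $X=-t\,\partial_t$. A direct computation yields $|X|=1$ and $\mathrm{div}(X)=2$, and invariance under parabolic $(x,y)$-translations is immediate.
\end{enumerate}

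Because $\pi_1(R)\to\pi_1(Y)$ factors through $H$, the inclusion $R\hookrightarrow Y$ lifts to a map $g:R\to Y_H:=\HH^3/H$, which is a local isometry. The field $X$ descends to a smooth vector field $\bar{X}$ on $Y_H$ preserving the estimates $|\bar X|\leq 1$ and $\mathrm{div}(\bar X)\geq 2$. Applying Stokes' theorem to the $2$-form $\alpha=\iota_{\bar X}\,dV_{Y_H}$ pulled back via $g$, together with the local isometry property of $g$, gives
$$2\,\Vol(R)\;\leq\;\int_R g^{*}\!\bigl(\mathrm{div}(\bar X)\,dV\bigr)\;=\;\int_R g^{*}d\alpha\;=\;\int_{\partial R} g^{*}\alpha\;\leq\;\int_{\partial R}|\bar X|\,dA\;\leq\;\Area(\partial R),$$
which yields the desired inequality.

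The main obstacle is the case analysis, and particularly the loxodromic divergence identity in (ii); the parabolic case reduces to standard horospherical geometry, and the trivial case is the classical sphere comparison. A minor verification is smoothness of $\tanh(r)\nabla r$ across the locus $r=0$ in cases (i) and (ii), which follows from the fact that $r\,\nabla r$ extends smoothly through the origin in geodesic normal (resp.\ Fermi) coordinates, in the same way the Euclidean position vector is smooth through the origin of a plane.
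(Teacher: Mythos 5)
Your proof is correct, and it takes a genuinely different route from the paper's.

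The paper's argument passes to the cover $X\to Y$ corresponding to $H=\mathrm{Im}(\pi_1(R)\to\pi_1(Y))$, cuts $X$ along the preimage $\Sigma$ of a wedge of circles under a $\pi_1$-injective map $X\to T^2$, and then builds large F\o lner-type blocks $\tilde R_m$ in the universal cover by stacking $\sim (2m)^2$ translates of $\tilde R\cap (X\setminus\Sigma)$. It applies the classical isoperimetric inequality $\Vol\le\frac12\Area$ in $\HH^3$ to $\tilde R_m$ and lets $m\to\infty$; the $\Sigma$-contribution to the boundary is lower order because $\ZZ^2$ (or $\ZZ$) is amenable. Your argument instead proves the isoperimetric inequality \emph{directly in each elementary quotient} $\HH^3/H$: you produce, for each of the four conjugacy types of elementary $H$ (trivial, loxodromic cyclic, parabolic cyclic, parabolic $\ZZ^2$), an explicit $H$-invariant vector field on $\HH^3$ with $|X|\le1$ and $\mathrm{div}\,X\ge 2$, and then apply the divergence theorem to the lift $g:R\to\HH^3/H$. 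Your divergence computations are correct: $\mathrm{div}(\tanh r\,\nabla r)=2+\operatorname{sech}^2 r$ for distance to a point, $\equiv 2$ for distance to a geodesic (via the Fermi volume element $\frac12\sinh 2r$), and $\mathrm{div}(-t\partial_t)=2$ in the upper half-space. Smoothness of $\tanh(r)\nabla r$ across $r=0$ and invariance under the full stabilizers (including loxodromics with rotational part) are handled as you say. The calibration approach buys you a uniform, limit-free proof that does not need the compactness of $\Sigma$ or the exhaustion argument, and it makes transparent exactly why the classical $\HH^3$ estimate survives the passage to elementary quotients — namely, $H$-invariance of the calibrating field. The paper's approach has the virtue of quoting the $\HH^3$ isoperimetric inequality as a black box, but is more delicate to set up (one must justify finiteness of $\Area(\Sigma)$ and track the constants in the exhaustion). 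Both yield the same sharp constant $\frac12$.
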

\begin{proof} Pass to the covering of $X$ of $Y$ corresponding to the image of $\pi_1(R)\to\pi_1(Y)$, then there is a copy of $R$ in $X$ lifted from $R\subset Y$. As $R$ is elementary in $Y$, $\pi_1(X)$ is free abelian of rank $\leq2$, so we pick a $\pi_1$-injective map to a torus $f:X\to T^2$. Let $W\subset T^2$ be the union of two generator slopes on $T^2$ meeting in one point. We may assume $f$ and $f|_{\partial R}$ are transversal to both circles in $W$, then $\Sigma=f^{-1}(W)$ is a $2$-sub-complex in $X$ with finite area (since compact and measurable) such that the universal covering space $\tilde{X}$, isometric to $\HH^3$, can be constructed by gluing copies of $C_g=X\setminus\Sigma$ indexed by $g\in\pi_1(X)$. Let $\kappa:\tilde{X}\to X$, then any connected component $\tilde{R}$ of $\kappa^{-1}(R)$ is a universal covering of $R$. To illustrate, consider $\pi_1(X)\cong\ZZ\oplus\ZZ$ for instance, and let $\alpha,\beta$ be two generators such that $C_\alpha\cap C_0\neq\emptyset$, $C_\beta\cap C_0\neq\emptyset$. For any $m>0$, let $\tilde{R}_n$ be the union of all the $\tilde{R}\cap C_{i\,\alpha+j\,\beta}$, where $-m\leq i,j\leq m$. It is clear $\Vol(\tilde{R}_m)=4m^2\,\Vol(R)$, and $\Area(\partial\tilde{R}_n)=4m^2\,\Area(\partial R)+2m\,\Area(\Sigma)$. Using the isoperimetric inequality in $\HH^3$, we have $$\Vol(\tilde{R}_m)\leq \frac{1}{2}\Area(\partial\tilde{R}_m).$$
This implies $\Vol(R)\leq\frac{1}{2}\Area(\partial R)$ as $m\to+\infty$. When $\pi_1(X)$ is isomorphic to $\ZZ$ or trivial, the argument is similar.
\end{proof}

To finish the proof of Theorem \ref{volBound}, by Proposition \ref{elemCplment}, the compactification of each components of $J_i\setminus f(K)$ is also elementary. Thus, by Lemma \ref{isopIneq}, 
$\Vol(J_i)\leq \pi m_i+{\epsilon^2A_i}/{\epsilon_3^2}$.
We obtain:
$$\sum_{i=0}^s\Vol(J_i)\leq\pi\sum_{i=1}^s m_i+\frac{\epsilon^2}{\epsilon_3^2}\sum_{i=1}^sA_i\leq \pi\ell(\mathcal{P})+\frac{\epsilon^2A}{\epsilon_3^2},$$
where $A=A_1+\cdots+A_s$ is a constant independent of $\epsilon>0$. As $\epsilon\to0$, the left-hand side goes to
$v_3\norm{M}=\sum_{i=1}^s\Vol(J_{i,\,\geo})$, and the right-hand side goes to $\pi\ell(\mathcal{P})$. We conclude:
$$v_3\norm{M}\leq\pi\ell(\mathcal{P}).$$
This completes the proof of Theorem \ref{volBound}.

\section{The JSJ decomposition of knot complements}\label{Sec-cpnship}

In this section, we review the JSJ decomposition of knot complements following \cite{Bu}, and provide an equivalent data-structural description of a knot complement as a \hyperlink{term-rootedTree}{rooted tree} with vertices decorated by compatible geometric \hyperlink{term-node}{nodes}, (Proposition \ref{knotData}). This is in preparation of the proof of Theorem \ref{main}.

Let $k$ be a knot in $S^3$. For the knot complement $M=S^3-k$, i.e. $S^3$ with an open regular neighborhood of $k$ removed, the JSJ graph $\Lambda$ is a finite tree as every embedded torus in $S^3$ is separating. Moreover, $\Lambda$ has a natural rooted tree structure. \raisebox{\baselineskip}[0pt]{\hypertarget{term-rootedTree}} Recall that a finite tree is \emph{rooted} if it has a specified vertex, called the \emph{root}. The edges are naturally directed toward the root, thus every non-root vertex has a unique \emph{parent} adjacent to it. The adjacent vertices of a vertex except its parent are called its \emph{children}. Every vertex is contained in a unique \emph{complete rooted subtree}, namely the maximal subtree with the induced edge directions in which the vertex becomes the root. 

\begin{definition}\label{rootedJSJ} \raisebox{\baselineskip}[0pt]{\hypertarget{term-rootedJSJTree}} For a knot complement $M$, the associated \emph{rooted JSJ tree} $\vec\Lambda$ is a rooted tree isomorphic to the JSJ tree $\Lambda$ of $M$ with the root corresponding to the unique JSJ \hyperlink{term-piece}{piece} containing $\partial M$.
\end{definition}

The \hyperlink{term-rootedJSJTree}{rooted JSJ tree} is related to the satellite constructions of knots. In fact, for any \hyperlink{term-rootedTree}{complete rooted subtree} $\vec\Lambda_\cpn\subset\vec\Lambda$, the subspace $M_\cpn\subset M$ over $\vec\Lambda_\cpn$ is homeomorphic to the complement of a knot $k_\cpn$ in $S^3$, and the subspace $N\subset M$ over $\vec\Lambda\setminus\vec\Lambda_\cpn$ is homeomorphic to the complement of a knot $k_\pat$ in a solid torus $S^1\times D^2$ with the natural product structure. Thus $k$ is the satellite knot of $k_\cpn\subset S^3$ and $k_\pat\subset S^1\times D^2$. 

To give a more precise description of the JSJ \hyperlink{term-piece}{pieces} and how they are glued together, \cite{Bu} introduced the notion of \hyperlink{term-KGL}{KGLs}.

\begin{definition}[{\cite[Definition 4.4]{Bu}}]\label{KGL} \raisebox{\baselineskip}[0pt]{\hypertarget{term-KGL}}A \emph{knot-generating link} (KGL) is an oriented link $L=k_\pat\sqcup k_{\cpn_1}\sqcup\cdots\sqcup k_{\cpn_r}\subset S^3$, ($r\geq 0$), such that 
$k_{\cpn_1}\sqcup\cdots\sqcup k_{\cpn_r}$ is an oriented unlink.
\end{definition}

\begin{example}\label{KGL-example} \raisebox{\baselineskip}[0pt]{\hypertarget{term-KGLSF}} Figure \ref{figKGL} exhibits three families of \hyperlink{term-KGL}{KGLs}, namely, (right-handed) \emph{$r$-key-chain links} ($r>1$), \emph{$p/q$-torus knots} ($p,q$ coprime, $|p|>1$, $q>1$), and \emph{$p/q$-cable links} ($p,q$ coprime, $q>1$). Their complements are all Seifert fibered. \raisebox{\baselineskip}[0pt]{\hypertarget{term-KGLhyp}} There are also \emph{hyperbolic} KGLs, namely whose complements are hyperbolic, such as the Borromean rings with suitable assignments of components.

\begin{figure} [htb]
\centering
\psfrag{A}{\scriptsize{$r$-key-chain link, as $r=3$}}
\psfrag{B}{\scriptsize{$p/q$-torus knot, as $p/q=-2/3$}}
\psfrag{C}{\scriptsize{$p/q$-cable link, as $p/q=1/3$}}
\psfrag{1}{\scriptsize{$k_{\cpn_1}$}}
\psfrag{2}{\scriptsize{$k_{\cpn_2}$}}
\psfrag{3}{\scriptsize{$k_{\cpn_3}$}}
\psfrag{p}{\scriptsize{$k_{\pat}$}}
\includegraphics[scale=.7]{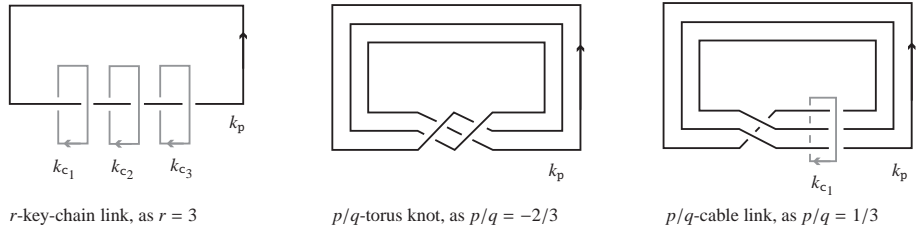}
\caption{Three families of Seifert fibered KGLs.}\label{figKGL}
\end{figure}

\end{example}

According to \cite{Bu}, the JSJ decomposition of knot complements may be described as below.

\begin{theorem}[{Cf. \cite[Theorem 4.18]{Bu}}]\label{BudneyThm}
Suppose $k$ is a nontrivial knot in $S^3$. Let $\vec\Lambda$ be the \hyperlink{term-rootedJSJTree}{rooted JSJ tree} of $M=S^3-k$. Then:

\medskip\noindent(1) Every vertex $v\in\vec\Lambda$ is associated to a \hyperlink{term-KGL}{KGL}, $L_v=k_\pat\sqcup k_{\cpn_1}\sqcup\cdots\sqcup k_{\cpn_r}\subset S^3$, satisfying the following requirements: the JSJ \hyperlink{term-piece}{piece} $J_v$ corresponding to $v$ is homeomorphic to $S^3-L_v$; $\partial J_v=\partial_\pat J_v\sqcup\partial_{\cpn_1}J_v\sqcup\cdots\sqcup\partial_{\cpn_r}J_v$, $r\geq0$, where $\partial_\pat J_v$ is the torus adjacent to the parent \hyperlink{term-piece}{piece} of $J_v$, or is $\partial M$ if $v$ is the root, and each $\partial_{\cpn_i}J_v$ is adjacent to a distinct child vertex of $v$; and if $v'$ is a child of $v$, and let $k_{\cpn'}\subset L_v$, $k'_\pat\subset L_{v'}$ be the components so that $\partial_{\cpn'} J_v$ is glued to $\partial_\pat J_{v'}$, then the meridian of $k_{\cpn'}$ is glued to the longitude of $k'_\pat\subset S^3$ preserving orientations. Note the longitude and the meridian of a component of an oriented link are naturally oriented. 

\medskip\noindent(2) There are only four possible families of \hyperlink{term-KGL}{KGLs} that could be associated to vertices of $\vec\Lambda$, namely, $r$-key-chain links ($r>1$), $p/q$-torus knots ($p,q$ coprime, $|p|>1$, $q>1$), $p/q$-cable links ($p,q$ coprime, $q>1$), and hyperbolic \hyperlink{term-KGL}{KGLs}. Furthermore, no key-chain-link vertex has a key-chain-link child. 

\medskip\noindent(3) The associated \hyperlink{term-KGL}{KGLs} are canonical 
up to unoriented isotopies of $L_v$'s with respect to the requirements. 
Moreover, any \hyperlink{term-rootedJSJTree}{rooted tree} $\vec\Lambda$ with an 
assignment of vertices to \hyperlink{term-KGL}{KGLs} satisfying the 
properties above realizes a unique nontrivial knot $k$ in $S^3$ up to isotopy.
\end{theorem}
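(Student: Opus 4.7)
The plan is to follow Budney's treatment in \cite{Bu} closely, first establishing the rooted tree structure and the identification of each piece as a link complement, then classifying the Seifert fibered pieces, and finally handling canonicity and the converse realization.

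For part (1), observe that $\vec\Lambda$ is a tree because every embedded torus in $S^3$ separates (as $H_2(S^3;\ZZ)=0$), and rooting at the piece adjacent to $\partial M$ is well-defined. Given any vertex $v$, each JSJ torus bounding $J_v$ separates $S^3$ into two pieces, and on the ``away from the root'' side one finds (by the fact that every incompressible torus in a solid torus is parallel, applied inductively to the descendants) an embedded solid torus in $S^3$ whose core is a component of the would-be KGL. Filling in all child boundary tori by their solid tori, and then filling $\partial_\pat J_v$ (or $\partial M$ when $v$ is the root) by its complementary solid torus inside $S^3$, recovers $S^3$ and exhibits $J_v \cong S^3 - L_v$ where the companion components $k_{\cpn_i}$ are the cores of the drilled child solid tori. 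These cores form an unlink because each bounds a disk in the corresponding child solid torus before reassembly. The meridian-longitude gluing law follows by comparing primitive classes on the relevant torus: the meridian of $k_{\cpn'}$ is the class bounding a disk in the child solid torus when viewed from $J_v$, while from the child's perspective this same class is the canonical Seifert longitude of the pattern knot $k'_\pat \subset S^3$ (since in the child's reassembled $S^3$ the torus bounds the companion complement on one side and the pattern solid torus on the other, and the longitude is characterized homologically).

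For part (2), each JSJ piece is either hyperbolic (the first possibility) or Seifert fibered. In the Seifert fibered case, because the piece embeds in $S^3$ as a link complement, its base $2$-orbifold must be planar. The constraints of irreducibility, $\partial$-irreducibility, atoroidality of the piece itself, and nontriviality (so that it is genuinely present in the JSJ decomposition) reduce the possible base orbifolds to exactly three: a disk with $r>1$ holes (yielding $r$-key-chain links), a disk with two cone points of orders $|p|>1$ and $q>1$ (yielding $(p,q)$-torus knot complements), and an annulus with a single cone point of order $q>1$ (yielding $(p,q)$-cable links, where the framing $p$ records the gluing of the fiber). The forbidden configuration of a key-chain vertex having a key-chain child is ruled out by minimality of the JSJ decomposition: concatenating the two key-chain pieces along the shared JSJ torus gives a larger Seifert fibered piece, contradicting the fact that the JSJ torus was essential. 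This classification step, specifically ruling out every other planar base orbifold and verifying the coprimality and size constraints on $(p,q)$, is where I expect the main work to lie.

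For part (3), canonicity of the assigned KGLs up to unoriented isotopy is inherited from the canonicity of the JSJ decomposition together with the uniqueness (up to isotopy) of how each $J_v$ re-embeds into $S^3$ after filling its boundary tori by solid tori; the orientations of $k$ and its inherited descendants pin down the rest. Conversely, given any rooted tree with KGL decorations satisfying the requirements, one builds $k$ by an inductive splicing procedure starting from the root: at each edge, the child pattern knot $k'_\pat \subset S^1 \times D^2$ is inserted into a tubular neighborhood of the parent companion $k_{\cpn'} \subset S^3$ by the standard satellite operation, with the meridian-longitude gluing law guaranteeing that the resulting satellite is well-defined. The resulting knot is unique up to isotopy because any ambiguity in each splicing step is eliminated by the orientation data propagated from the root, and a second decorated tree producing an isotopic knot would reproduce the same JSJ decomposition and hence the same decorated data by part (1).
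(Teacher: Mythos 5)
The paper does not prove this theorem; the header ``Cf.~\cite[Theorem 4.18]{Bu}'' and the sentence preceding the statement (``According to \cite{Bu}, the JSJ decomposition of knot complements may be described as below'') make clear that Agol--Liu are quoting Budney's result, not re-deriving it. So there is no in-paper proof to compare against, and you were effectively asked to reconstruct Budney's argument.

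Your high-level strategy (realize each piece as a link exterior, classify the Seifert fibered bases, build the converse by iterated splicing) is the right one, but there is a genuine error in the core step of part (1). For a child boundary torus $\partial_{\cpn_i}J_v$, the ``away from the root'' side in $S^3$ is the union of the child piece and all its descendants, which is the exterior of the corresponding companion knot -- a nontrivial knot exterior, \emph{not} a solid torus. (It would be a solid torus only if the companion were trivial, in which case that torus would be compressible and not a JSJ torus at all.) The solid torus sits on the \emph{root} side of the child torus, and it contains $k$, so you cannot drill its core to build $L_v$. Your parenthetical appeal to ``every incompressible torus in a solid torus is parallel'' does not apply: the ambient space containing the child torus is $S^3$, not a solid torus. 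What is actually needed is a re-embedding (``untwisting'') step: one replaces each away-side knot exterior by an abstract solid torus, producing a new embedding $J_v\hookrightarrow S^3$ in which each boundary torus bounds a solid torus on the side opposite to $J_v$, and $J_v$ is the complement of the cores of these solid tori. One must then show the child cores form an unlink, which is the defining KGL condition and the real technical content; your remark that each ``bounds a disk in the corresponding child solid torus'' is false (the core of a solid torus is nontrivial in it -- you presumably mean the meridian, but meridians bounding disks does not give unlinkedness). The unlink property is also what makes your homological description of the meridian--longitude gluing rule work, so it cannot be invoked before the KGL structure is established. Parts (2) and (3) of your sketch are reasonable but inherit this gap, since the orbifold classification and the splicing converse both presuppose that the pieces are genuine KGL complements.
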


In view of the satellite construction, the minimal complete rooted subtree of $\vec\Lambda$ 
containing a cable-link vertex corresponds to the complement of a cable knot, 
and the minimal \hyperlink{term-rootedTree}{complete rooted subtree} of $\vec\Lambda$ 
containing a key-chain-link vertex corresponds to the complement of a connected sum of knots. 

For our purpose of use, we prefer to encode a \hyperlink{term-KGL}{KGL} by its complement, forgetting the embedding into $S^3$, 
but remembering the children longitudes:

\begin{definition}\label{node} \raisebox{\baselineskip}[0pt]{\hypertarget{term-node}}
A \emph{node} is a triple $(J,\mu_\pat, \seq{\lambda_{\cpn_1},\cdots,\lambda_{\cpn_r}})$, $r\geq0$, 
where $J$ is an oriented compact $3$-manifold homeomorphic to an irreducible $(r+1)$-component 
\hyperlink{term-KGL}{KGL} complement with incompressible boundary, $\mu_\pat$ 
and $\lambda_{\cpn_i}$'s are slopes on distinct components of $\partial J$, 
i.e. oriented simple closed curves up to isotopy on $\partial J$,
such that the Dehn filling of $J$ along $\mu_\pat$ yields an $r$-component unlink complement
with meridian slopes $\seq{\lambda_{\cpn_1},\cdots,\lambda_{\cpn_r}}$ .
We call $\mu_\pat$ the \emph{parent meridian} and the  $\lambda_{\cpn_i}$'s the \emph{children longitudes}. 
It is \emph{compatible} with a vertex $v$ in a rooted tree $\vec\Lambda$ if $r$ 
equals the number of the children of $v$. Two nodes 
$(J,\mu_{\pat},\seq{\lambda_{\cpn_1},\cdots,\lambda_{\cpn_r}})$, $(J',\mu'_{\pat},\seq{\lambda'_{\cpn_1},\cdots,\lambda'_{\cpn_{r'}}})$ 
are \emph{isomorphic} if there is an orientation-preserving homeomorphism 
between the pairs $(J,\mu_{\pat}\sqcup\lambda_{\cpn_1}\sqcup\cdots\sqcup\lambda_{\cpn_r})$ 
and $(J',\mu'_{\pat}\sqcup \lambda'_{\cpn_1}\sqcup\cdots\sqcup\lambda'_{\cpn_{r'}})$, and in particular, $r=r'$.
\end{definition} 
\begin{remark}
For a node $(J,\mu_\pat, \seq{\lambda_{\cpn_1},\cdots,\lambda_{\cpn_r}})$, 
$\partial J$ is the disjoint union of components $\partial_\pat J\sqcup \partial_{\cpn_1}J\sqcup\cdots\sqcup\partial_{\cpn_r}J$, 
such that $\mu_\pat\subset \partial_\pat J$ and  $\lambda_{\cpn_i}\subset \partial_{\cpn_i}J$ for $1\leq i\leq r$. 
Each $\partial_{\cpn_i}J$ is called a \emph{child boundary}, and $\partial_\pat J$ is called the \emph{parent boundary}. 
The \emph{parent meridian} $\mu_\pat$ is determined up to finitely many possibilities by $\seq{\lambda_{\cpn_1},\cdots,\lambda_{\cpn_r}}$, and up to orientation. 
If $J$ is a key-chain, then $\mu_\pat$ is the boundary slope induced by the Seifert fibering. 
If $J$ is a torus  knot, then $\mu_\pat$ is determined by the unique meridian which makes it a knot complement. 
If $J$ is a cable link, then $\mu_\pat$ must intersect the fiber slope once if Dehn filling is to give the unknot. 
All of these possible meridians intersecting the fiber slope once 
are related by Dehn twists along the annulus connecting the two boundary components, 
and therefore $\mu_\pat$ is uniquely determined by $\lambda_{\cpn_1}$. Otherwise, if $J$ is hyperbolic, 
then there are at most $3$  possibilities for $\mu_\pat\subset \partial_\pat J$ by \cite[Theorem 2.4.4]{CGLS}. 
 There are naturally defined \emph{children meridians}, up to isotopy on $\partial J$, 
 which are the oriented simple closed curves $\mu_{\cpn_i}\subset\partial_{\cpn_i}J$ 
 such that $\mu_{\cpn_i}$ is null-homotopic in the Dehn filling of $J$ along $\mu_\pat$, 
 and that the orientation induced by $(\lambda_{\cpn_i},-\mu_{\cpn_i})$ coincides with that of $\partial_{\cpn_i}J$. 
 There is also a naturally defined \emph{parent longitude} $\lambda_\pat\subset\partial_\pat J$, up to isotopy on $\partial J$, 
 which is the oriented simple closed curve such that $\lambda_\pat$ is null-homological in the Dehn filling of $J$ along 
 $\lambda_{\cpn_1},\cdots,\lambda_{\cpn_r}$, and that the orientation induced by $(\lambda_\pat,\mu_\pat)$ coincides with that of $\partial_\pat J$.
\end{remark}

We say a \hyperlink{term-node}{node} is \emph{geometric} if $J$ is either Seifert fibered or hyperbolic. More specifically, we say \emph{key-chain nodes}, \emph{torus-knot nodes}, \emph{cable nodes} and \emph{hyperbolic nodes}, according to their defining \hyperlink{term-KGL}{KGLs}. The first three families are also called \emph{Seifert fibered nodes}.

Now Theorem \ref{BudneyThm} may be rephrased as follows.

\begin{prop}\label{knotData} Every nontrivial knot complement $M$ is completely characterized by the following data: (i) the \hyperlink{term-rootedJSJTree}{rooted JSJ tree} $\vec\Lambda$; and (ii) the assignment of the vertices of $\vec\Lambda$ to compatible geometric \hyperlink{term-node}{nodes}, each of which is either a key-chain node, or a torus-knot node, or a cable node, or a hyperbolic node.\end{prop}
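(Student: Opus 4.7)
The plan is to show that Proposition \ref{knotData} is a tautological reformulation of Theorem \ref{BudneyThm}, replacing the KGL assignment at each vertex by its intrinsic complement together with the data of a parent meridian and children longitudes. Concretely, I would establish a canonical bijection between KGLs $L = k_\pat \sqcup k_{\cpn_1} \sqcup \cdots \sqcup k_{\cpn_r}$ (up to component-labeled isotopy in $S^3$) and geometric nodes (up to the isomorphism of Definition \ref{node}). The forward map sends $L$ to $(J, \mu_\pat, \{\lambda_{\cpn_1},\dots,\lambda_{\cpn_r}\})$, where $J = S^3 \setminus N(L)$, $\mu_\pat$ is the oriented meridian of $k_\pat$, and each $\lambda_{\cpn_i}$ is the oriented meridian of $k_{\cpn_i}$ in $S^3$. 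Because the KGL axiom forces $k_{\cpn_1} \sqcup \cdots \sqcup k_{\cpn_r}$ to be an unlink, the Dehn filling of $J$ along $\mu_\pat$ reinstates $k_\pat$ and produces the standard $r$-component unlink complement, whose meridians are precisely the $\lambda_{\cpn_i}$, so the node axiom is verified.

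For the inverse map, given a node $(J, \mu_\pat, \{\lambda_{\cpn_i}\})$, I perform the Dehn filling $J_{\mu_\pat}$, which by hypothesis is an $r$-component unlink complement in $S^3$, and then invoke the uniqueness of unlink complements in $S^3$ (a classical consequence of the loop and sphere theorems) to embed $J_{\mu_\pat}$ into $S^3$ in essentially one way so that the $\lambda_{\cpn_i}$ become the standard meridians; un-filling then extracts a canonical core curve $k_\pat \subset S^3$, producing the KGL $L$ up to isotopy. The main technical step is verifying that this embedding, and hence the recovered $k_\pat$, is well-defined up to isotopy --- this is where the uniqueness of unlink complements is essential and where some care with oriented slopes and with the finite freedom in $\mu_\pat$ (per the remark following Definition \ref{node}) is required. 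Both compositions are then manifestly the identity on equivalence classes.

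Once the bijection is in hand, the four families of geometric nodes (key-chain, torus-knot, cable, hyperbolic) correspond tautologically to the four families of KGLs in Theorem \ref{BudneyThm}(2), and the compatibility condition on the number of children matches the arity $r$ of the KGL. To finish, I would verify that the gluing between a parent node $v$ and a child node $v'$ along $\partial_{\cpn_i} J_v = \partial_\pat J_{v'}$ is rigidly determined by the node data: the rule from Theorem \ref{BudneyThm}(1) that the meridian of $k_{\cpn_i}$ is glued to the longitude of $k'_\pat$ translates to $\mu_{\cpn_i}^{(v)} = \lambda_\pat^{(v')}$, and since both slopes on each side are intrinsically defined by their respective nodes (again by the remark following Definition \ref{node}), the complementary identification $\lambda_{\cpn_i}^{(v)} = \pm \mu_\pat^{(v')}$ is forced and the orientation-preserving gluing map of tori is uniquely pinned down. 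The proposition is then immediate from Theorem \ref{BudneyThm}(3). The principal obstacle is the well-definedness of the un-filling, which I expect to reduce entirely to the classical uniqueness of unlink complements.
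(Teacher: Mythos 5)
Your proposal takes essentially the same route the paper does: the paper offers no explicit proof, merely announcing that Theorem~\ref{BudneyThm} ``may be rephrased'' as Proposition~\ref{knotData}, and the bijection you build between KGLs (up to labelled isotopy) and geometric nodes (up to isomorphism in the sense of Definition~\ref{node}) is precisely the content of that rephrasing. Your forward map, your use of the fact that Dehn filling $J$ along $\mu_\pat$ and then along all the $\lambda_{\cpn_i}$ returns $S^3$ (so the cores of the filling tori reassemble the KGL), and your appeal to the triviality of the mapping class group of $S^3$ to see that the recovered link is well-defined, are all correct and spell out what the paper leaves implicit.

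There is, however, one internal inconsistency you should fix, which traces back to the paper's admittedly counterintuitive labelling. In your forward map you (correctly, matching Definition~\ref{node}) set $\lambda_{\cpn_i}$ to be the meridian of $k_{\cpn_i}$ in $S^3$; accordingly, by the remark after Definition~\ref{node} the ``child meridian'' $\mu_{\cpn_i}$ (the slope that dies in the unlink complement $J_{\mu_\pat}$) is the \emph{Seifert longitude} of $k_{\cpn_i}$, not its meridian. But when you translate Budney's gluing rule you wrote ``the meridian of $k_{\cpn_i}$ \ldots\ translates to $\mu_{\cpn_i}^{(v)} = \lambda_\pat^{(v')}$,'' which flips the dictionary. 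The correct translation is $\lambda_{\cpn_i}^{(v)} \leftrightarrow \lambda_\pat^{(v')}$ for the primary identification, and then $\mu_{\cpn_i}^{(v)} \leftrightarrow \mu_\pat^{(v')}$ for the complementary one (the signs are then consistent with the boundary orientations $(\lambda_{\cpn_i},-\mu_{\cpn_i})$ and $(\lambda_\pat,\mu_\pat)$ from the remark, producing an orientation-reversing torus gluing as required for an orientable total space). This is purely a bookkeeping slip, not a gap in the argument; once fixed, your reduction to Theorem~\ref{BudneyThm}(3) goes through as stated. One further cosmetic point: rather than ``uniqueness of unlink complements,'' the cleaner citation for well-definedness of the inverse map is that the node data fills to $S^3$ and every self-homeomorphism of $S^3$ is isotopic to the identity, so the recovered core link is determined up to ambient isotopy.
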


Provided Proposition \ref{knotData}, in order to prove Theorem \ref{main}, we must bound the number of allowable isomorphism types of the \hyperlink{term-rootedJSJTree}{rooted JSJ tree} and the number of allowable \hyperlink{term-node}{node} types, under the assumption that $G$ maps onto the fundamental group of the knot complement $M$. This amounts to bounding the number of JSJ \hyperlink{term-piece}{pieces}, the
homeomorphism types of the JSJ \hyperlink{term-piece}{pieces}, as well as the number of allowable assignments of \hyperlink{term-node}{children longitudes}.

\section{Isomorphism types of the rooted JSJ tree}\label{Sec-number}
We start to prove Theorem \ref{main} in this section. Suppose $G$ is a finitely presented group with $b_1(G)=1$, and $M$ is a nontrivial knot complement such that there is an epimorphism $\phi:G\twoheadrightarrow\pi_1(M)$. In this section, we show that there are at most finitely many allowable isomorphism types of the \hyperlink{term-rootedJSJTree}{rooted JSJ trees} of $M$. In Section \ref{Sec-homeoTypes}, we shall show that there are at most finitely many homeomorphism types of geometric \hyperlink{term-piece}{pieces} that are allowed to be a geometric \hyperlink{term-piece}{piece} of $M$. In Section \ref{Sec-clTypes}, we shall show that there are at most finitely many allowable assignments of \hyperlink{term-node}{children longitudes} for any such \hyperlink{term-piece}{piece} to make it a \hyperlink{term-node}{node} decorating the \hyperlink{term-rootedJSJTree}{rooted JSJ tree} of $M$. By Proposition \ref{knotData}, this will complete the proof of Theorem \ref{main}.

\begin{lemma}\label{pieceNumber} Suppose $G$ is a finitely generated group of rank $n$, and $M$ is a knot complement such that $G$ maps onto $\pi_1(M)$. Then $M$ has at most $4n-3$ \hyperlink{term-piece}{pieces} in its JSJ decomposition. Hence there are at most finitely many allowable isomorphism types of \hyperlink{term-rootedJSJTree}{rooted JSJ trees}.\end{lemma}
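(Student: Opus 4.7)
The plan is to show $k \leq 4n-3$, where $k$ is the number of JSJ pieces of $M$, and then deduce the finiteness of isomorphism types since there are only finitely many isomorphism types of rooted trees on at most $4n-3$ vertices.

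Since $\phi\colon G\twoheadrightarrow \pi_1(M)$, one has $\mathrm{rank}(\pi_1(M))\leq n$, so it suffices to bound $k$ linearly in $\mathrm{rank}(\pi_1(M))$. The group $\pi_1(M)$ is the fundamental group of a tree of groups coming from the JSJ decomposition: the underlying graph is a tree with $k$ vertices, $k-1$ edges, rank-$2$ free abelian edge groups (the JSJ tori), and vertex groups $\pi_1(J_v)$ of rank $\geq 2$, with key-chain pieces contributing rank $r_v+1\geq 3$ (Proposition~\ref{knotData}). The core of the argument is a rank inequality for amalgamated products over rank-$2$ abelian groups, which bounds the drop in rank per edge by at most $4=2\cdot\mathrm{rank}(\mathbb{Z}^2)$; this is the source of the factor $4$ in the bound $4n-3$.

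I would proceed by induction on $k$ using the rooted structure of $\vec\Lambda$, case-splitting on the type of the root $v_0$ from Theorem~\ref{BudneyThm}: for a leaf the bound is trivial; for cable and hyperbolic roots with children, the proper rooted subtrees correspond to strictly smaller sub-knot complements (Proposition~\ref{knotData}) to which the induction hypothesis applies, and the amalgamation at the peripheral $\mathbb{Z}^2$ torus contributes at most a rank drop of $2$; for a key-chain root with $r\geq 2$ children, $M$ is the connect sum of the $r$ companion knots, and Scharlemann--Schultens type additivity of rank (or tunnel number) under connect sum propagates the inductive bound, with the constraint that no key-chain has a key-chain child (Theorem~\ref{BudneyThm}(2)) preventing unbounded accumulation.

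The main obstacle is tracking the constant $4$ sharply through the induction, especially across key-chain roots where multiple child subtrees each contribute. Once $k\leq 4n-3$ is established, the final claim is immediate by counting rooted trees.
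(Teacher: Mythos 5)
Your proof rests on a ``rank inequality for amalgamated products over rank-$2$ abelian groups, which bounds the drop in rank per edge by at most $4$,'' but no such inequality holds in the generality you need, and this is the gap. A tree of groups in which every vertex group and every edge group is $\mathbb{Z}^2$, with every edge inclusion an isomorphism, has fundamental group $\mathbb{Z}^2$ of rank $2$ regardless of the number of vertices; nothing of the form $\mathrm{rank}\geq\sum_v\mathrm{rank}(G_v)-4(k-1)$ can rule this out. Even restricting to knot-complement JSJ pieces does not rescue the estimate numerically: torus-knot, cable, and many hyperbolic pieces have rank-$2$ fundamental group, so the proposed right-hand side is at most $2k-4(k-1)=4-2k$, which is negative once $k\geq 3$, and the bound says nothing. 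What actually prevents rank from collapsing across JSJ edges is the $2$-acylindricity of the $\pi_1(M)$-action on the Bass--Serre tree of the JSJ decomposition, which holds because no JSJ piece of a knot complement is the nontrivial $S^1$-bundle over a M\"obius strip (cf.\ \cite[p.\ 298]{BW}); your proposal never identifies or uses this property. The paper invokes Weidmann's theorem \cite[Theorem 2]{We}, which bounds the number of vertices of the quotient graph of groups of a minimal $k$-acylindrical action of a non-cyclic, freely indecomposable, $n$-generated group by $1+2k(n-1)$; taking $k=2$ gives $4n-3$ directly, and the surjection $\phi$ transfers the $n$-generation hypothesis to $\pi_1(M)=\phi(G)$. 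An inductive argument along the rooted JSJ tree would still need to track acylindricity through each step and would in effect be reproving Weidmann's theorem in this special case. The final ``hence'' step of your proposal is fine.
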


\begin{proof}
The upper-bound of the number of geometric \hyperlink{term-piece}{pieces} is a quick consequence from a theorem of Richard Weidmann. In \cite[Theorem 2]{We}, he proved that if $G$ is a non-cyclic freely-indecomposable $n$-generated group with a minimal $k$-acylindrical action on a simplicial tree, then the graph-of-groups decomposition induced by the action has at most $1+2k(n-1)$ vertices. Recall that for a group $G$, a $G$-action on a simplicial tree $T$ is called minimal if there is no proper subtree which is $G$-invariant, and is called $k$-acylindrical if no nontrivial element of $G$ fixes a segment of length $>k$. 

Note there is a $\pi_1(M)$-action on the Bass-Serre tree $T$ associated to the JSJ decomposition of $M$. 
Precisely, $T$ is the simplicial tree constructed as follows.
Let $\tilde{M}$ be the universal covering of $M$, 
then the preimage of any geometric \hyperlink{term-piece}{piece} is a collection of component. 
A vertex of $T$ is a component of a geometric \hyperlink{term-piece}{piece} of $M$; 
two vertices are joined by an edge if and only if they are adjacent to each other. 
By specifying a base-point of $M$, there is a natural $\pi_1(M)$-action on $T$ 
induced by the covering transformation. 
Because there is no geometric \hyperlink{term-piece}{piece} homeomorphic to the nontrivial $S^1$-bundle 
over a M\"obius strip as $M$ is a knot complement, it is known that $\pi_1(M)$-action on $T$ is minimal and $2$-acylindrical, cf. \cite[p. 298]{BW}. Therefore, the induced $\phi(G)$-action on $T$ 
is also minimal and $2$-acylindrical. Since $\phi(G)$ is finitely generated as is $G$, 
we may apply Weidmann's theorem to obtain an upper-bound of the number 
of geometric \hyperlink{term-piece}{pieces} by $1+4(n-1)=4n-3$, where $n$ is the rank of $G$.

The `hence' part follows since there are only finitely many isomorphism types of \hyperlink{term-rootedTree}{rooted}  \hyperlink{term-rootedTree}{trees} with at most $4n-3$ vertices.
\end{proof}

\section{Homeomorphism types of geometric pieces}\label{Sec-homeoTypes}
In this section, we show there are at most finitely many allowable homeomorphism types of geometric \hyperlink{term-piece}{pieces} under the assumption of Theorem \ref{main}. We consider the hyperbolic case and the Seifert fibered case in Subsections \ref{Subsec-hypPieces} and \ref{Subsec-SFPieces}, respectively.

\subsection{Homeomorphism types of hyperbolic pieces}\label{Subsec-hypPieces}
In this subsection, we show there are at most finitely many allowable homeomorphism types of hyperbolic \hyperlink{term-piece}{pieces}:

\begin{prop}\label{hypPieces} Let $G$ be a finitely presented group with $b_1(G)=1$, then there are at most finitely homeomorphism types of hyperbolic \hyperlink{term-piece}{pieces} $J$ such that $J$ is a hyperbolic \hyperlink{term-piece}{piece} of 
some knot complement $M$ such that $G$ maps onto $\pi_1(M)$.
\end{prop}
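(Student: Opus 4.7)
The plan is to proceed by contradiction, combining the simplicial volume bound of Theorem \ref{volBound} with the Jørgensen--Thurston compactness theorem and the extended-drilling factorization Theorem \ref{factor-hyp}.

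By Theorem \ref{volBound}, every knot complement $M$ with $G \twoheadrightarrow \pi_1(M)$ satisfies $v_3\norm{M} \leq \pi\ell(G)$, and so every hyperbolic \hyperlink{term-piece}{piece} $J$ of such $M$ satisfies $\Vol(J_\geo) \leq \pi\ell(G)$. Suppose for contradiction there is an infinite sequence $J_1,J_2,\ldots$ of pairwise non-homeomorphic such hyperbolic pieces, each a piece of a knot complement $M_n$ with an epimorphism $\phi_n: G \twoheadrightarrow \pi_1(M_n)$. By Jørgensen--Thurston compactness applied to the uniformly bounded-volume family $\{J_{n,\geo}\}$, after passing to a subsequence $J_n \to J_\infty$ geometrically, where $J_\infty$ is a cusped hyperbolic $3$-manifold of strictly greater cusp number. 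Hence for all sufficiently large $n$, the interior of $J_n$ contains a simple closed geodesic $\gamma_n$ with $\ell_{J_{n,\geo}}(\gamma_n) \to 0$.

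For $n$ large enough that $\ell(\gamma_n)$ falls below the threshold of Theorem \ref{factor-hyp}, the epimorphism $\phi_n$ factors as $\phi_n = \iota_n^e \circ \phi_n^e$ for some $\phi_n^e: G \to \pi_1(N_n^{e(\zeta_n, m_n)})$, with $N_n = M_n - \gamma_n$ and $1 \leq m_n \leq T(\ell(G))$. Let $H_n = \phi_n^e(G) \leq \pi_1(N_n^e)$: then $H_n$ is finitely generated with $b_1(H_n) \leq 1$, and the restriction of $\iota_n^e$ surjects $H_n$ onto $\pi_1(M_n)$. Passing to further subsequences, arrange $m_n \equiv m$ constant; by Lemma \ref{pieceNumber}, the rooted JSJ trees of all $M_n$ share a common isomorphism type; and by a further application of Jørgensen--Thurston, with normalization of the other hyperbolic pieces of $M_n$, the $N_n$ are pairwise homeomorphic to a single manifold $N$. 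The only remaining variation among the $M_n$ is in the Dehn filling slope $\zeta_n$ on the distinguished torus of $N$, and $\zeta_n \to \infty$ in slope length.

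The contradiction must then come from the resulting data: a fixed $N$, a fixed $m$, a sequence of finitely generated images $H_n \leq \pi_1(N^{e(\zeta_n,m)})$ with $b_1 \leq 1$, and an infinite family of distinct slopes $\zeta_n$ such that $H_n$ surjects onto the extended-filling quotient $\pi_1(N^{e(\zeta_n,m)})/\langle\langle \zeta_n/m\rangle\rangle$. By Proposition \ref{ScottCore} and Lemma \ref{simplifyCore}, each $H_n$ corresponds to a cover of $N^{e(\zeta_n,m)}$ with an aspherical Scott core having no contractible or non-central elementary \hyperlink{term-chunk}{chunk}. Because the regular part of each $N^{e(\zeta_n, m)}$ is the fixed $N$ and the ridge piece has bounded denominator $m$, the combinatorial types of such Scott cores form a finite collection. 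The main obstacle --- and I expect the heart of the proof --- is then the pigeonhole step: showing that the finitely many allowable Scott-core types can accommodate only finitely many compatible filling slopes $\zeta_n$, contradicting $\zeta_n \to \infty$. This step must exploit the coherence of Dehn extensions from Subsection \ref{Subsec-Scott} and a careful analysis of how the peripheral subgroup of the distinguished torus of $N$ sits inside $H_n$ relative to the ridge chunk, paralleling the obstruction computation carried out for the twist-knot example in Example \ref{knotExample}.
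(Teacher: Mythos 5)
Your opening moves match the paper: use Theorem \ref{volBound} to bound the volume, J{\o}rgensen--Thurston to obtain a hyperbolic piece with an arbitrarily short geodesic $\gamma$, and Theorem \ref{factor-hyp} to factor the epimorphism through the extended drilling $\pi_1(N^{e})$. The Scott-core machinery of Proposition \ref{ScottCore} and Lemma \ref{simplifyCore} is also the right toolkit. Up to that point you are on the paper's track.

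After that, however, the proposal diverges and then stops short. You explicitly write that you \emph{expect} the heart of the proof to be a pigeonhole step over finitely many ``Scott-core combinatorial types'' versus infinitely many slopes $\zeta_n$, but you do not carry out that step, nor justify that the collection of core types is finite, nor show why finitely many types could not accommodate infinitely many slopes. This is the actual content of the proposition and it is missing. The paper does not run a pigeonhole argument over slopes at all; indeed it does not even need to make the drilled manifolds $N_n$ coincide. Instead, for a \emph{single} $M$ with a sufficiently short geodesic, one passes to the cover $\tilde N^{e}$ corresponding to ${\rm Im}(\pi_1(f^e))$, takes an aspherical Scott core $C$ as in Proposition \ref{ScottCore} simplified by Lemma \ref{simplifyCore}, and observes from $b_1(C)=1$, $b_0(C)=1$, $b_3(C)=0$, $\chi(C)\le 0$ that in fact $\chi(C)=0$ and $b_2(C)=0$. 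The contradiction is then purely homological: there must be a hyperbolic chunk $Q$ covering $Y=J-\gamma$ (else $f$ would miss $Y$ and could not be $\pi_1$-surjective); if ${\rm Im}(\pi_1(Q)\to\pi_1(Y))$ had finite index, a torus of $\partial Q$ covering the side torus $\partial_\gamma Y$ would give a nonzero class in $H_2(C;\QQ)$, contradicting $b_2(C)=0$; otherwise $\pi_1(Q)$ is a non-elementary Kleinian group of infinite covolume, so $\chi(Q)<0$, hence $\chi(C)\le\chi(Q)<0$, contradicting $\chi(C)=0$. This is the step your proposal is missing.

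Two further points. First, the intermediate reduction you propose --- passing to subsequences to force a single $m$, a fixed rooted JSJ tree, and a fixed $N$ so that only $\zeta_n$ varies --- is both unnecessary and not justified: $N_n=M_n-\gamma_n$ carries all the JSJ pieces of $M_n$, and controlling all of them simultaneously would require essentially the rest of the paper (Propositions \ref{SFPieces}, \ref{hypPieces} itself, and Section \ref{Sec-clTypes}), so this reduction is circular in spirit. Second, your claim $b_1(H_n)\le 1$ should actually be $b_1=1$ (since $H_n$ surjects onto $\pi_1(M_n)$); the paper works directly with $b_1(\tilde N^e)=1$. The upshot: the setup is right, but the proposal does not contain a proof --- the decisive Euler-characteristic and $b_2$ argument on the Scott core is absent.
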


We prove Proposition \ref{hypPieces} in the rest of this subsection.

Let $K$ be a finite $2$-complex $K$ of a presentation $\mathcal{P}$ of $G$ achieving $\ell(G)$.
To argue by contradiction, suppose $G$ maps onto infinitely
many knot groups $\pi_1(M_n)$ such that there are infinitely many homeomorphically distinct hyperbolic \hyperlink{term-piece}{pieces} showing up.
By Theorem \ref{volBound}, $\Vol(M_n)\leq\pi\ell(G)$. The J{\o}rgensen-Thurston
theorem (\cite[Theorem 5.12.1]{Th}) implies infinitely many of these \hyperlink{term-piece}{pieces} are distinct hyperbolic Dehn fillings 
of some hyperbolic $3$-manifold of finite volume. In particular, for any $\delta>0$
infinitely many of these \hyperlink{term-piece}{pieces} contain closed geodesics of length $<\delta$.
Let $M$ be a knot complement with a hyperbolic \hyperlink{term-piece}{piece} $J$ containing a sufficiently 
short closed geodesic $\gamma$ so that Theorem \ref{factor-hyp} holds. 
By assumption, there is a
$\pi_1$-surjective map $f:K\to M$. By Theorem \ref{factor-hyp},
it factorizes through the \hyperlink{term-extDehnFilling}{extended Dehn filling} $i^e:N^e\to M$ of some denominator $m>0$, up to homotopy,
namely $f\simeq i^e\circ f^e$, where $f^e:K\to N^e$. Remember $N=M-\gamma$ and $N^e=N\cup Z$.

Note that $b_1(K)=1$ and $b_1(N^e)=2$, so $f^e$ is not $\pi_1$-surjective. We consider the covering space
$\kappa:\tilde{N}^e\to N^e$ corresponding to ${\rm Im}(\pi_1(f^e))$, after choosing some base-points.

Let $\tori$ be union of the JSJ tori of $N^e$. 
Note $\tori$ is the union of the JSJ tori of $M$ together with the drilling boundary $\partial_\gamma N$, 
and the \hyperlink{term-ridge}{ridge piece} $Z$ of $N^e$ is only adjacent to the hyperbolic \hyperlink{term-piece}{piece} $Y=J-\gamma$. 
By Proposition \ref{ScottCore},
there is an aspherical Scott core $C$ of $\tilde{N}^e$ such
that $C\cap\kappa^{-1}(\tori)$ are essential annuli and/or tori. Moreover, because $b_1(G)=1$ implies $b_1(\tilde{N}^e)=1$, by  Lemma \ref{simplifyCore},
$C$ contains no contractible \hyperlink{term-chunk}{chunk} or non-central elementary \hyperlink{term-chunk}{chunk}, 
in particular, no elementary hyperbolic \hyperlink{term-chunk}{chunk}. 
Also, $b_0(C)=1, b_3(C)=0$ and $\chi(C)\leq 0$ (since each chunk of $C$ has $\chi\leq 0$, and the chunks are glued along tori and annuli by Lemma \ref{simplifyCore}). Therefore, since $b_1(C)=1$, we also have $b_2(C)=0$ and $\chi(C)=0$.  

However, there must be some hyperbolic \hyperlink{term-chunk}{chunk} $Q$ covering $Y$, 
because otherwise $\kappa|:C\to N^e$ would miss the interior of $Y$ up to homotopy, 
so  would map into $Z$ or $N-Y$. Either case contradicts $f:K\to M$ being $\pi_1$-surjective by the Van Kampen theorem. 
Thus $Q$ is a non-elementary hyperbolic \hyperlink{term-chunk}{chunk} of $C$. 

Suppose ${\rm Im}(\pi_1(Q)\to \pi_1(Y))$ has finite index in $\pi_1(Y)$. Then there is a torus boundary component
$\tilde{T}\subset \partial Q\cap \kappa^{-1}(\mathcal{T})$ which is adjacent to a
ridge piece. If so, then $\tilde{T}$ covers the \hyperlink{term-ridge}{side torus}  $T=\partial_\gamma Y \subset N^e$. Since $0\neq [T]\subset H_2(N^e;\QQ)$, 
we conclude that $0\neq [\tilde{T}] \in H_2(C;\QQ)$. Thus, $b_2(C) >0$, which gives a contradiction. 

We conclude $\pi_1(Q)$ is isomorphic to a non-elementary Kleinian group with infinite covolume, so $\chi(Q)<0$. Note $C$ is cut along tori/annuli into non-contractible \hyperlink{term-chunk}{chunks} with nonpositive Euler characteristics. We conclude $\chi(\tilde{N}^e)=\chi(C)\leq\chi(Q)<0$, a contradiction.

\subsection{Homeomorphism types of Seifert fibered pieces}\label{Subsec-SFPieces}
In this subsection, we show there are at most finitely many homeomorphism types of Seifert fibered \hyperlink{term-piece}{pieces}. 

\begin{prop}\label{SFPieces} Let $G$ be a finitely presented group with $b_1(G)=1$. Then there are at most finitely many homeomorphism types of Seifert fibered \hyperlink{term-piece}{pieces} $J$ such that $J$ is a Seifert fibered \hyperlink{term-piece}{piece} of 
some knot complement $M$ such that $G$ maps onto $\pi_1(M)$. In fact, there are at most finitely many allowable values of $q$ for a $p/q$-cable \hyperlink{term-piece}{piece}, where $p\neq 0, q>1$ and $p,q$ are coprime integers, and there are at most finitely many allowable values of $p,q$ for a $p/q$-torus-knot \hyperlink{term-piece}{piece}, where $|p|>1,q>1$ and $p,q$ are coprime integers.
\end{prop}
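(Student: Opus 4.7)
The plan is to mirror the proof of Proposition \ref{hypPieces} with Theorem \ref{factor-SF} replacing Theorem \ref{factor-hyp}, handling the three Seifert fibered KGL families (Example \ref{KGL-example}) separately. The key-chain family is immediate: since $r$ equals the number of children of a key-chain node in the rooted JSJ tree, Lemma \ref{pieceNumber} bounds $r$ by $4n-3$ where $n$ is the rank of $G$, and for each such $r$ there is a unique homeomorphism type of $r$-key-chain complement. For torus-knot and cable nodes, the homeomorphism type of the piece is determined by the cone-point orders of its base orbifold (namely $|p|$ and $q$ for torus-knot, and $q$ for cable), so it suffices to bound these orders by a constant depending on $\ell(G)$.

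To get such a bound, I would argue by contradiction. If some cone-point order were unbounded over all admissible knot complements, pass to a subsequence and pick $M$ with $G\twoheadrightarrow\pi_1(M)$ admitting a Seifert fibered piece $J\subset M$ whose base orbifold $\mathcal{O}$ has a cone point of angle so small that Theorem \ref{factor-SF} applies to the corresponding exceptional fiber $\gamma$. Setting $N=M-\gamma$, any $\pi_1$-surjective PL realization $f\colon K\to M$ of the given epimorphism factors up to homotopy as $f\simeq i^e\circ f^e$ through a Dehn extension $N^e=N^{e(\zeta,m)}$ with $m\leq T(\ell(G))$. Since $b_1(K)=1<2=b_1(N^e)$, the factored map $f^e$ is not $\pi_1$-surjective; let $\kappa\colon\tilde{N}^e\to N^e$ be the covering corresponding to $\mathrm{Im}(f^e_\sharp)$. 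As a finitely generated quotient of $G$ still surjecting onto $\pi_1(M)$, $\pi_1(\tilde{N}^e)$ satisfies $b_1(\tilde{N}^e)=1$. Proposition \ref{ScottCore} and Lemma \ref{simplifyCore} then supply a simplified Scott core $C\subset\tilde{N}^e$ without contractible or non-central elementary chunks, and the computation from Proposition \ref{hypPieces} yields $b_0(C)=1$, $b_3(C)=0$, $\chi(C)\leq 0$, hence $b_2(C)=0$ and $\chi(C)=0$. As before, $\pi_1$-surjectivity of $\phi$ forces some chunk $Q\subset C$ to cover a component $\tilde{Y}$ of $\kappa^{-1}(Y)$, where $Y=J-\gamma$.

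In the sub-case where $\pi_1(Q)\to\pi_1(Y)$ has finite image index, the argument of Proposition \ref{hypPieces} applies verbatim: a torus component $\tilde{T}\subset\partial Q$ covers $T=\partial_\gamma Y$, a Mayer--Vietoris computation on $N^e=N\cup_T Z$ gives $[T]\neq 0$ in $H_2(N^e;\QQ)$, so $[\tilde{T}]\neq 0$ in $H_2(C;\QQ)$, contradicting $b_2(C)=0$. The hard part will be the infinite-index sub-case. The hyperbolic proof concluded by invoking $\chi(Q)<0$ to contradict $\chi(C)=0$, but every Seifert fibered chunk has $\chi(Q)=0$ (its compact core is Seifert fibered over a compact $2$-orbifold with toral boundary, and $\chi$ vanishes on such), so this shortcut is unavailable. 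To replace it, the plan is to exploit the central $S^1$-fiber structure of $\tilde{Y}$. If some component of $\kappa^{-1}(T)\cap Q$ is a torus, then the finite-index argument applies to that component and gives the contradiction. Otherwise every conjugate of $\pi_1(Q)$ has rank-at-most-$1$ intersection with $\pi_1(T)$, so the image of $\pi_1(Q)$ in the Fuchsian quotient $\pi_1^{orb}(\mathcal{O}')$ of the drilled base is a non-elementary, infinite-index, hence virtually free Fuchsian subgroup. The planar Scott core of the corresponding base cover must then have a peripheral circle lifting a boundary geodesic of $\mathcal{O}'$ --- the alternative of purely funnel-truncation ends would render $\pi_1(Q)$ convex-cocompact in the Fuchsian quotient, conflicting with the non-central non-elementary character of $Q$ supplied by Lemma \ref{simplifyCore} --- producing a torus in $\partial Q$ that covers some JSJ torus of $N^e$ with non-trivial $H_2$-class, and yielding once more the $b_2(C)=0$ contradiction. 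Carrying out this sub-case analysis rigorously, handling separately whether $J$ is a root piece (so that $\partial M\subset\partial Y$ interferes with the $H_2$-counting), is the main technical obstacle.
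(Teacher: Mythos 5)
Your plan correctly reproduces the setup of the paper's argument: the key-chain case is dispatched by Lemma~\ref{pieceNumber}, the reduction to bounding the cone-point orders $p,q$ is right, and the use of Theorem~\ref{factor-SF} to factor $f$ through an \hyperlink{term-extDehnFilling}{extended Dehn filling}, pass to the cover $\tilde{N}^e$ of $N^e$ with $b_1(\tilde{N}^e)=1$, and take a simplified Scott core $C$ with $b_2(C)=0$ and $\chi(C)=0$ is all exactly as the paper does. The ``finite-index'' sub-case --- a torus component of $\partial_\gamma Q$ would cover $T=\partial_\gamma Y$, whose class $[T]$ is non-zero in $H_2(N^e;\QQ)$, forcing $b_2(C)>0$ --- is also correct and appears in the paper as Lemma~\ref{ridge torus}.

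The gap is in the infinite-index sub-case, which is exactly where the genuine content of the proposition lies, and your proposed route through a $b_2$-contradiction cannot be made to work. After the tori adjacent to ridge chunks have been ruled out (your finite-index argument), the only tori left in $\partial_\tori Q$ for a cable chunk $Q$ cover $\partial_\pat Y$ or $\partial_\cpn Y$, and neither of these furnishes a non-trivial $H_2$-class in $\tilde{N}^e$ after the paper's Lemma~\ref{notOnto} shows that $\partial_\pat J$ must in fact be parallel to $\partial M$ (so $J$ is the root), and $\partial_\cpn Y$ is a separating internal torus whose class dies. Moreover, your Fuchsian-quotient argument is not sound: the image of $\pi_1(Q)$ in the base Fuchsian group being virtually free and convex-cocompact is perfectly compatible with $Q$ being non-elementary and non-central, so the claimed contradiction does not materialize, and a planar $W$ with all pattern/drilling boundary being arcs (which is what the paper actually proves in Lemma~\ref{CblCkBdry}) gives $b_2(Q)=0$ and no torus of the kind you want.

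The paper's proof of the cable case instead establishes a string of structural lemmas about the Scott core --- Lemma~\ref{annulusRidge} (annulus components of $C_\tori$ meet only ridge chunks, with fiber core), Lemma~\ref{separating} (every component of $C_\tori$ separates $C$, proved via a ``resolution'' $\hat{C}^\sigma$ of the ridge chunks and a case analysis on Klein bottle vs.\ torus boundaries), and Lemma~\ref{CblCkBdry} (cable chunks are trivial $S^1$-bundles over planar surfaces that are regular neighborhoods of trees) --- and then concludes with a homological argument of a completely different flavor (Lemma~\ref{notOnto}): one computes that each chunk of $C$ maps into $q'\ZZ \subset H_1(M)\cong\ZZ$, where $q' = q/\gcd(q,m)$ and $m\le T(\ell(G))$ is the denominator of the Dehn extension, using the cabling construction for $M_\cpn$-chunks, the fiber class for cable chunks, and a careful count of the covering degree of the ridge mapping-cylinder for ridge chunks. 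Since the components of $C_\tori$ separate, a Mayer--Vietoris argument forces the whole composite $H_1(\tilde{N}^e)\to H_1(M)$ to have image in $q'\ZZ$, contradicting $\pi_1$-surjectivity of $f$ whenever $q > m$. This $H_1$-obstruction (rather than an $H_2$- or $\chi$-obstruction) is the mechanism that replaces the hyperbolic-case Euler-characteristic trick, and your proposal is missing it. The torus-knot case then reduces to observing that $\partial_\pat J$ has a non-trivial $H_2$-class unless $J$ is the root, combined with the elementary-cover analysis.
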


We prove Proposition \ref{SFPieces} in the rest of this subsection.

We first explain why the `in fact' part implies the first statement. Remember from Section \ref{Sec-cpnship} that there are only three families of Seifert fibered \hyperlink{term-piece}{pieces} that could be a JSJ \hyperlink{term-piece}{piece} of a nontrivial knot complement, namely the key-chain link complements, cable-link complements, and torus-knot complements, (cf. Example \ref{KGL-example}). For an $(r+1)$-component key-chain link, the homeomorphism type of its complement is determined by $r$, indeed, it is homeomorphic to  $F_{0,r+1}\times S^1$ where $F_{0,r+1}$ is $S^2$ with $(r+1)$ open disks removed. Thus the allowed values of $r$ are bounded by the number of JSJ \hyperlink{term-piece}{pieces} of $M$, which is bounded in terms of $G$ by Lemma \ref{pieceNumber}. For a $p/q$-cable-link, the homeomorphism type of its complement is determined by $q$ together with the residual class of $p\bmod q$, (possibly with some redundancy). For a $p/q$-torus knot, the homeomorphism type of its complement is determined by the value $p/q\in\QQ$, (possibly with some redudancy). Therefore, the `in fact' part and Lemma \ref{pieceNumber} implies that there are at most finitely many allowable homeomorphism types of Seifert fibered \hyperlink{term-piece}{pieces}.

It now suffices to prove the `in fact' part. The arguments for the $p/q$-cable case and the $p/q$-torus-knot case are essentially the same, but we treat them as two cases for convenience.

\bigskip\noindent\textbf{Case 1}. Homeomorphism types of cable \hyperlink{term-piece}{pieces}.

Let $K$ be a finite $2$-complex $K$ of a presentation $\mathcal{P}$ of $G$ achieving $\ell(G)$.
To argue by contradiction, suppose $G$ maps onto infinitely
many knot groups $\pi_1(M_n)$ such that there are infinitely many homeomorphically 
distinct $p_n/q_n$-cable \hyperlink{term-piece}{pieces} $J_n\subset M_n$ arising. Then for infinitely many $n$, $q_n>1$ are sufficiently large. Let $M=M_n$ be such a knot complement, $q=q_n$, $J=J_n$, and $f:K\to M$ be a $\pi_1$-surjective map as assumed. 
By Theorem \ref{factor-SF}, $f$ factors through the \hyperlink{term-extDehnFilling}{extended Dehn filling} $f^e:K\to N^e$ of some denominator $m \leq T(\ell(G))$ up to homotopy, where $N=M-\gamma$ is the drilling along the corresponding exceptional fiber $\gamma\subset J$ with boundary $T$, and $N^e=N\cup_T  Z$ is the \hyperlink{term-DehnExt}{Dehn extension}.

Consider the covering space $\kappa:\tilde{N}^e\to N^e$ corresponding to ${\rm Im}(\pi_1(f^e))$, after choosing some base-points. Then there is a homotopy lift $\tilde{f}^e:K\to \tilde{N}^e$, such that $\tilde{f}^e$ is $\pi_1$-surjective and $f^e\simeq\kappa\circ\tilde{f}^e$. Therefore $b_1(\tilde{N}^e)=1$ by the assumption $b_1(K)=1$. 

We wish to show, however, $b_1(\tilde{N}^e)>1$ in order to reach a contradiction.

Let $\tori$ be the union of the JSJ tori of $N^e$. Let $Y=J-\gamma$ be the regular cable \hyperlink{term-piece}{piece} of $N^e$, and let $Z$ be the \hyperlink{term-ridge}{ridge piece} of $N^e$ adjacent to $Y$. By Proposition \ref{ScottCore}, there is an aspherical Scott core $C$ of $\tilde{N}^e$ such that $C_{\tori}=C\cap\kappa^{-1}(\tori)$ are essential annuli and/or tori. Moreover,  because $b_1(G)=1$ implies $b_1(\tilde{N}^e)=1$, by Lemma \ref{simplifyCore} $C$ has no contractible \hyperlink{term-chunk}{chunk} and no non-central elementary \hyperlink{term-chunk}{chunk}, in particular, no Seifert fibered \hyperlink{term-chunk}{chunk} which is an $I$-bundle over an annulus.
Also, $b_0(C)=1, b_3(C)=0$ and $\chi(C)= 0$. Therefore, since $b_1(C)=1$, we also have $b_2(C)=0$.

Note $Y$ is homeomorphic to a trivial $S^1$-bundle over a pair of pants $\Sigma$, and $\partial Y$ consists of three components, namely, the parent boundary ${\partial_\pat}Y$ (the `pattern boundary'), the child boundary  ${\partial_\cpn}Y$ (the `companion boundary'), and the drilling boundary $T=\partial_\gamma Y$. We also write $\partial\Sigma={\partial_\pat}\Sigma\sqcup{\partial_\cpn}\Sigma\sqcup\partial_\gamma\Sigma$ correspondingly. Let $\tilde{Y}$ be a component of $\kappa^{-1}(Y)$ and $Q=C\cap\tilde{Y}$ be a cable \hyperlink{term-chunk}{chunk}. Then $\tilde{Y}$ is homeomorphic to either a trivial $S^1$-bundle or a trivial $\RR$-bundle over a finitely generated covering $\bar{\kappa}:\tilde{\Sigma}\to\Sigma$, and $Q$ is homeomorphic to either a trivial $S^1$-bundle or a trivial $I$-bundle over a Scott core $W$ of $\tilde\Sigma$, which is an orientable compact surface. Moreover, the \hyperlink{term-cutBdry}{cut boundary} $\partial_{\tori}Q=Q\cap\kappa^{-1}(\tori)\subset\partial Q$ is a union of annuli and/or tori. It is a sub-bundle over a corresponding union of arcs and/or loops $\partial_{\mathcal T}W\subset\partial W$, and can be decomposed as a disjoint union $${\partial_\pat}Q\sqcup{\partial_\cpn}Q\sqcup\partial_\gamma Q,$$
according to the image under $\kappa|:\partial Q\to\partial Y$. We also write $\partial_\tori W={\partial_\pat} W\sqcup{\partial_\cpn} W\sqcup\partial_\gamma W$ correspondingly. We will use the same notation for a Seifert fibered \hyperlink{term-chunk}{chunk} which is not $Y$, except that $\partial_\gamma$ will be empty in such a case, and $W$ may be an orbifold instead of a surface. 

The following lemma rules out the case that $Q$ is an $I$-bundle. 
Remembering that there is no disk component in $C_\tori$ by Lemma \ref{simplifyCore}, the lemma below says any component of $C_\tori$ is a torus unless it is adjacent to a ridge chunk.

\begin{lemma}\label{annulusRidge} If $A$ is an annulus component of $C_\tori$, then $A$ is adjacent to a ridge \hyperlink{term-chunk}{chunk}, and its core is a fiber in $Q$.\end{lemma}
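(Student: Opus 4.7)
I plan to establish Lemma \ref{annulusRidge} by first pinning down the structure of all chunks via an Euler characteristic argument, then showing annulus cores on Seifert chunks must be fibers, and finally deriving a contradiction from the hypothesis that $A$ is between two non-ridge chunks.

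Since $\tilde N^e$ is an open aspherical $3$-manifold with $b_1(\tilde N^e) = b_1(K) = 1$, we have $\chi(C) = \chi(\tilde N^e) = 0$. Using the graph-of-spaces decomposition of $C$ along $C_\tori$, whose edges are tori or annuli with $\chi = 0$, additivity yields $\sum_v \chi(Q_v) = 0$. Each chunk $Q_v$ is aspherical with $\chi(Q_v) \leq 0$: ridge chunks and Seifert fibered $S^1$-bundle chunks are products with $S^1$, hence $\chi = 0$; Seifert $I$-bundle chunks over $W$ have $\chi(W) \leq 0$, with equality only in cases excluded by Lemma \ref{simplifyCore}; and hyperbolic chunks satisfy $\chi(Q_v) = \tfrac{1}{2}\chi(\partial Q_v) \leq 0$, with equality iff $\partial Q_v$ consists of tori, Klein bottles, and annuli. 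Hence every chunk has $\chi(Q_v) = 0$, forcing every Seifert fibered chunk to be a trivial $S^1$-bundle $W \times S^1$ and every hyperbolic chunk to have only tori, annuli, or Klein bottles as boundary.

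Granted this, for a Seifert fibered chunk $Q = W \times S^1$ the cut boundary equals $\partial_\tori W \times S^1$, with $\partial_\tori W \subset \partial W$ a disjoint union of arcs and loops. An annulus component of $\partial_\tori Q$ then arises from an arc $\alpha \subset \partial_\tori W$, yielding $A \cong \alpha \times S^1$ whose core is the $S^1$-fiber of $Q$; this gives the fiber-core part of the lemma, once adjacency to a Seifert chunk is known.

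It remains to show that $A$ is adjacent to a ridge chunk, i.e., to rule out that $A$ sits between two regular (non-ridge) chunks $Q_1, Q_2$ covering a JSJ torus $T' \neq T$ of $M$. If both $Q_1, Q_2$ are Seifert fibered, then by the previous paragraph the core of $A$ is the Seifert fiber on both sides, so the fiber slopes of the two adjacent Seifert pieces of $M$ coincide on $T'$, contradicting the essential nature of $T'$ in the JSJ decomposition. The remaining sub-cases with at least one hyperbolic chunk will be dispatched by combining the chunk-structure constraints above with the Betti-number identity $\sum_v b_1(Q_v) = V + E_t$ (derived from $b_1(C) = 1$ via Mayer-Vietoris, where $V$ counts chunks and $E_t$ counts torus gluing edges in $\Lambda_C$), together with the fact that the induced map $H^{\mathrm{ab}} \to H_1(M)$ is an isomorphism onto $\ZZ$. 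The main obstacle is this last step: ruling out cylinder covers of internal JSJ tori of $M$ away from the drilling torus requires a delicate global homological analysis, as such covers can a priori fit the local chunk constraints.
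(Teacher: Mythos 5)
Your Euler--characteristic setup is correct and is in fact the same mechanism the paper uses: $\chi(C)=0$ (from $b_0(C)=1$, $b_1(C)=1$, $b_2(C)=0$, $b_3(C)=0$), together with additivity along the tori and annuli of $C_\tori$, forces $\chi(Q_v)=0$ for every chunk. Your treatment of the Seifert--Seifert case (fiber slopes of adjacent JSJ pieces disagree on the shared torus, so the core of $A$ can be a fiber on at most one side, and the other side would be an $I$-bundle chunk with $\chi<0$, excluded) and of the ``core is a fiber'' part are also sound and parallel the paper.

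The genuine gap, which you acknowledge yourself, is the case where $A$ is adjacent to a hyperbolic chunk $Q$. You stop at ``every hyperbolic chunk has only tori, annuli, or Klein bottles as boundary'' and then concede that ruling out the cylinder cover ``requires a delicate global homological analysis.'' The paper closes this case with a short hyperbolic--geometry argument that you are missing. By Lemma~\ref{simplifyCore} the chunk $Q$ is non-elementary, so $\pi_1(Q)$ is a non-elementary Kleinian group (a finitely generated subgroup of the fundamental group of a finite-volume hyperbolic piece). If $\pi_1(Q)$ had finite index, then $Q$ would be a compact finite-sheeted cover of the piece with $\partial Q$ consisting solely of tori, so $A$ could not be an annulus component of $\partial_\tori Q$. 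Hence $\pi_1(Q)$ has infinite index and therefore infinite covolume, and the Scott core of a non-elementary Kleinian group of infinite covolume has $\chi(Q)<0$ (some end has end-surface of negative Euler characteristic). Then $\chi(C)\le\chi(Q)<0$, contradicting $\chi(C)=0$. In other words, your inference ``$\chi(Q)=0\Rightarrow\partial Q$ consists of tori, annuli, Klein bottles'' is the wrong thing to extract from the Euler-characteristic constraint: Klein bottles cannot occur ($Q$ is a compact submanifold of the orientable $\tilde N^e$), and the hyperbolic rigidity of the situation forbids annuli; the correct use of $\chi(C)=0$ is as the target of a contradiction with the forced inequality $\chi(Q)<0$, not as a license for annulus boundary on hyperbolic chunks. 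Once you add this one lemma, your proposal reduces to the paper's proof; without it, the key case is unresolved.
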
  

\begin{proof} Suppose $A$ were adjacent to regular \hyperlink{term-chunk}{chunks} on both sides. If $A$ is adjacent to a
hyperbolic \hyperlink{term-chunk}{chunk} $Q$, then by the argument of Proposition \ref{hypPieces}, 
$\chi(Q)<0$, and hence $\chi(C)<0$, so $b_1(C)>1$, a contradiction. 
If $A$ is adjacent to Seifert fibered \hyperlink{term-chunk}{chunks} on both sides, 
then the core loop of $A$ can only cover a fiber in one of the corresponding 
\hyperlink{term-piece}{pieces} of $N$ under $\kappa$. Then the other \hyperlink{term-chunk}{chunk} 
must be a Seifert fibered \hyperlink{term-chunk}{chunk} which is an $I$-bundle over an orientable compact surface $W$. 
Then $\partial_{\mathcal T} W$ cannot be arcs of $\partial W$ as $\partial_{\tori}Q$ has no disk component. 
Hence $\partial_{\mathcal T}W$ are a few components of $\partial W$.
Note $W$ is a compact orientable surface, which cannot be a disk since $Q$ is not contractible. 
If $\chi(W)=0$, $W$ is an annulus, so $Q$ is an $I$-bundle over an annulus, 
which has been ruled out by our simplification of the Scott core $C$, (Lemma \ref{simplifyCore}). 
We conclude $\chi(W)<0$. Note $C$ is cut into \hyperlink{term-chunk}{chunks} along annuli and tori, 
$\chi(C)\leq\chi(Q)=\chi(W)<0$, so in this case $b_1(\tilde{N}^e)=b_1(C)>1$. This contradicts $b_1(C)=1$.

If the core of $A$ is not a fiber in the cable \hyperlink{term-chunk}{chunk} $Q$, then again we conclude that
$\chi(Q)<0$, a contradiction. 
\end{proof}

Thus we may assume that every Seifert fibered \hyperlink{term-chunk}{chunk} is an $S^1$-bundle over an orientable compact surface orbifold. 

\begin{lemma} \label{ridge torus}
There is no torus $\tilde{T}$ of $C_\mathcal{T}\subset C$ adjacent to a ridge \hyperlink{term-chunk}{chunk}. 
\end{lemma}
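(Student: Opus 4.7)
The plan is to argue by contradiction, mirroring the homological obstruction already employed in Proposition~\ref{hypPieces}. Suppose $\tilde{T}$ is a torus component of $C_\tori\subset C$ adjacent to a ridge chunk. The only JSJ torus of $N^e$ adjacent to the ridge piece $Z$ is the side torus $T=\partial_\gamma Y$, so $\tilde{T}$ is a component of $\kappa^{-1}(T)$; compactness of $\tilde{T}$ forces the restricted covering $\kappa|_{\tilde{T}}\colon\tilde{T}\to T$ to have positive finite degree $d$, whence $\kappa_*[\tilde{T}]=d\,[T]$ in $H_2(N^e;\QQ)$.

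The main step is to verify that $[T]$ is nonzero in $H_2(N^e;\QQ)$. For this I would apply Mayer--Vietoris to the decomposition $N^e=N\cup_T Z$. Since $Z\simeq T^2$ deformation retracts onto its ridge torus $T_0$ and the side torus $T$ includes into $Z$ as an $m$-fold cover of $T_0$ (where $m>1$ is the denominator of the Dehn extension present at this ridge), one obtains $H_2(N^e;\QQ)\cong\QQ$ generated by $[T_0]_{N^e}$, together with the relation $[T]_{N^e}=m\,[T_0]_{N^e}$. In particular $[T]_{N^e}\neq 0$, and hence $\kappa_*[\tilde{T}]=dm\,[T_0]_{N^e}\neq 0$, which forces $[\tilde{T}]\neq 0$ in $H_2(\tilde{N}^e;\QQ)$.

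To transfer this non-triviality down to $C$, I would invoke asphericity: both $N^e$ and its connected covering $\tilde{N}^e$ are aspherical by Subsection~\ref{Subsec-DehnExt}, and $C$ is an aspherical Scott core by Proposition~\ref{ScottCore}. The inclusion $C\hookrightarrow\tilde{N}^e$ is then a map between $K(\pi,1)$'s inducing an isomorphism on $\pi_1$, hence a homotopy equivalence by Whitehead. Therefore $H_2(C;\QQ)\cong H_2(\tilde{N}^e;\QQ)$, so $[\tilde{T}]\neq 0$ in $H_2(C;\QQ)$, contradicting the identity $b_2(C)=0$ already established in the paragraph preceding Lemma~\ref{annulusRidge}.

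The only non-formal ingredient is the Mayer--Vietoris computation showing $[T]\neq 0$ in $H_2(N^e;\QQ)$; the rest of the argument (asphericity of $N^e$, the Scott core, and the Euler-characteristic computation giving $b_2(C)=0$) is already in place.
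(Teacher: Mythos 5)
Your proposal is correct and follows the paper's argument exactly: show that $\tilde{T}$ finitely covers the side torus $T$, that $[T]\neq 0$ in $H_2(N^e;\QQ)$, and hence $[\tilde{T}]\neq 0$ in $H_2(C;\QQ)$, contradicting $b_2(C)=0$. You simply spell out the Mayer--Vietoris computation and the Whitehead/asphericity step that the paper leaves implicit.
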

\begin{proof}
If so, then $\tilde{T}$ covers the \hyperlink{term-ridge}{side torus}  $T=\partial_\gamma Y \subset N^e$. Since $0\neq [T]\subset H_2(N^e;\QQ)$, 
we conclude that $0\neq [\tilde{T}] \in H_2(C;\QQ)$. Thus, $b_2(C) >0$, a contradiction. 
\end{proof}

The following lemma rules out non-separating components of $C_\tori$.

\begin{lemma}\label{separating} Any component of $C_{\tori}$ is separating in $C$.\end{lemma}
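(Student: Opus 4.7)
I argue by contradiction: suppose some component $S\subset C_{\tori}$ is non-separating in $C$. By Lemma~\ref{simplifyCore} we have arranged that $C_{\tori}$ has no disk components, so $S$ is either a torus or an essential annulus. My plan is to produce, in each case, a closed $2$-cycle in $C$ whose class in $H_2(C;\QQ)$ is nonzero, contradicting $b_2(C)=0$.

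\emph{Torus case.} Here $S$ is already a closed oriented surface, so $[S]\in H_2(C;\QQ)$ is well-defined. By Lemma~\ref{ridge torus}, $S$ is not adjacent to any ridge chunk, hence both sides of $S$ are regular (manifold) chunks, and $C$ is an honest oriented $3$-manifold in a neighborhood of any interior point of $S$. Since $S$ is non-separating in $C$, there is a closed loop $\gamma\subset C$ meeting $S$ transversely at a single interior point lying in this manifold region. The local intersection pairing gives $[\gamma]\cdot[S]=\pm 1$, so $[S]\neq 0$ in $H_2(C;\QQ)$, contradicting $b_2(C)=0$.

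\emph{Annulus case.} Now $\partial S\subset\partial C$, so $[S]$ naturally lives in $H_2(C,\partial C;\QQ)$ and the torus argument does not directly apply. By Lemma~\ref{annulusRidge}, one side of $S$ lies in a ridge chunk $R$ and the core of $S$ is a fiber in the adjacent regular chunk $Q$. My plan is to use the structure of $R$ to cap $S$ off to a closed surface. The ridge chunk $R$ is a regular neighborhood of an annular component of the preimage of the ridge torus inside the relevant component $\tilde Z$ of $\kappa^{-1}(Z)$; inside $R$ I can find a sub-annulus $A\subset R$ whose boundary matches $\partial S$ (obtained by tracing the two boundary circles of $S$ across $R$ through the annular ridge). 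Then $S':=S\cup A\subset C$ is a closed embedded torus, and $S'$ still meets a transverse loop in a single point (the capping $A$ is contained in $R$ and can be chosen to miss such a loop), hence $S'$ is non-separating in $C$. Near the intersection point with the transverse loop we are in a regular chunk and $C$ is a manifold, so as in the torus case the intersection pairing forces $[S']\neq 0$ in $H_2(C;\QQ)$, contradicting $b_2(C)=0$.

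The main obstacle I anticipate is the annulus case, specifically the construction of the capping annulus $A$ inside the ridge chunk $R$: the ridge chunk can fail to be a manifold along its ridge (especially for denominators $m\geq 3$), so one must verify that the chosen $A$ lies in the non-singular part of $R$ and that $S\cup A$ remains non-separating. This will rely on the detailed description of ridge chunks from Proposition~\ref{ScottCore} together with the simplifications guaranteed by Lemma~\ref{simplifyCore}.
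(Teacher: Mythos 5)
The high-level strategy (exhibit a non-separating closed surface whose class is nontrivial in $H_2$, contradicting $b_2(C)=0$) is the same one the paper uses, but your justification of the key homological step has a genuine gap, and the annulus case is not actually reduced to the torus case as cleanly as you hope.

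\textbf{The main gap.} You argue that a transverse loop $\gamma$ with $\gamma\cdot S=\pm1$ forces $[S]\ne 0$ in $H_2(C;\QQ)$. That implication is Poincar\'e duality, and it requires the ambient space to be an \emph{oriented manifold}. Your $C$ is neither: it contains ridge chunks (so it is not a manifold), and even its resolution can be non-orientable, as the paper explicitly warns (``$\hat{C}^\sigma$ may be non-orientable''). What a coorientable surface $S$ with a once-transverse loop actually detects, in a general complex, is a nonzero class in $H^1(C;\ZZ)$ (the dual ``linking'' class evaluating to $\gamma\cdot S$ on $[\gamma]$). That shows $b_1(C)\ge 1$ -- which we already know -- and says nothing about $b_2$. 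The failure is not hypothetical: for the mapping torus $Y$ of an orientation-reversing self-map of $T^2$, the fiber $T$ is $2$-sided, orientable, non-separating, and has a loop meeting it once, yet $[T]$ is $2$-torsion, hence $0$ in $H_2(Y;\QQ)$. So ``local orientability near the intersection point'' does not suffice. The paper avoids this by forming the resolved \emph{manifold} $\hat{C}^\sigma$ over the unique loop of the decomposition graph $\Lambda$, checking that $\hat{C}^\sigma$ cut along the non-separating component is orientable and that $\partial\hat{C}^\sigma\ne\emptyset$; only then does ``$T$ $2$-sided, orientable, non-separating $\Rightarrow[T]\ne0$'' hold (one can verify it via the Mayer--Vietoris for the regluing, using that the only rational relation among boundary classes of a connected compact oriented $3$-manifold is the vanishing of the total boundary). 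It then runs an Euler-characteristic count in $\hat{C}^\sigma$, transports the conclusion $b_1>1$ back to $C^\sigma$ via the rational-homology isomorphism of the resolution map, and finally propagates $b_1>1$ from $C^\sigma$ to $C$ by a Mayer--Vietoris argument (the content of Lemma~\ref{MVarg}). None of these steps appears in your sketch.

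\textbf{The annulus case.} Beyond the same intersection-pairing issue, the plan to ``cap $S$ off inside the ridge chunk $R$'' is not established: you must show that the two boundary circles of $S$ cobound an annulus inside $R$, which is not automatic (the ridge chunk meets $C_\tori$ in possibly several annuli on the side-torus cover, and $\partial S$ need not bound in $R$). Even granting the cap, the resulting torus $S'$ is now adjacent to a ridge chunk on one side, so the assertion that ``near the intersection point $C$ is a manifold'' no longer applies. Moreover, the paper's treatment of the annulus case is substantively harder: it must rule out Klein-bottle boundary components of $\hat{C}^\sigma$, and the key tool there is that the core of the annulus covers an ordinary Seifert fiber which, by the $2$-acylindricity of the JSJ action, cannot be freely homotopic to its orientation-reversal. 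Your sketch does not engage with any of this, and I do not see a route around it.
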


\begin{proof} To argue by contradiction, suppose there were some non-separating component of $C_{\tori}$. Remember $C_\tori$ induces a graph-of-spaces decomposition of $C$ over a finite connected simplicial graph $\Lambda$. Because $b_1(C)=1$ and there is a non-separating edge, it is clear that the graph $\Lambda$ has a unique embedded loop.
Let $C^\sigma\subset C$ be the union of the \hyperlink{term-chunk}{chunk} over the embedded loop together with maximal regular \hyperlink{term-chunk}{chunks} that are adjacent to this \hyperlink{term-chunk}{chunk} at its regular \hyperlink{term-chunk}{subchunks}, and let $\sigma\subset\Lambda$ be the underlying subgraph of $C^\sigma$. Remember we do not regard ridge \hyperlink{term-chunk}{subchunks} which are homeomorphic to a $3$-manifold as regular, and hence regular \hyperlink{term-chunk}{subchunks} are all orientable. Thus the vertices of $\sigma$ corresponding to ridge \hyperlink{term-chunk}{subchunks} all lie on the embedded loop with valence $2$ in $\sigma$.

Consider the compact $3$-manifold $\hat{C}^\sigma$ obtained by replacing every ridge \hyperlink{term-chunk}{subchunk} $S\subset C^\sigma$ by a thickened annulus $A^2\times [0,1]$. This is possible because the \hyperlink{term-ridge}{ridge piece} has easily understood coverings. There is a natural `resolution' map: $$\varrho:\hat{C}^\sigma\to C^\sigma,$$ such that  $A^2\times\seq{\frac{1}{2}}$ covers the ridge of $S$, and $\varrho$ induces an isomorphism on the rational homology. In particular, $H_1(\hat{C}^\sigma;\QQ)\cong H_1(C^\sigma;\QQ)$. Note that $\hat{C}^\sigma$ may be non-orientable, but $\hat{C}^\sigma$ cut along any non-separating component of $C_\tori$ is always orientable.

However, we claim $b_1(\hat{C}^\sigma)>1$, and hence $b_1(C^\sigma)>1$. 
To see this, first note that $\hat{C}^\sigma$ is irreducible, and has no sphere boundary components by Lemma \ref{trivialCk}.
Thus, $\chi(\hat{C}^\sigma) \leq 0$. 
Suppose some non-separating component of $C_{\tori}$ is a torus $T$. 
Then $T\subset C^\sigma$, and correspondingly $T$ lies in the interior of $\hat{C}^\sigma$. 
However, $\partial \hat{C}^\sigma$ must be non-empty, since otherwise any maximal regular 
\hyperlink{term-chunk}{subchunk} $R$ of $C^\sigma$ would only have tori boundary which 
covers $\partial_\gamma N$ under $\kappa$, but this is clearly impossible as there is another 
component $\partial M\subset\partial N$. Thus, $b_3(\hat{C}^\sigma)=0$. 
Also, $0\neq [T] \in H_2(\hat{C}^\sigma;\QQ)$ since $T$ is $2$-sided, orientable, 
and non-separating, so $b_2(\hat{C}^\sigma)\geq 1$. Thus, 
$0\geq \chi(\hat{C}^\sigma) = b_0(\hat{C}^\sigma)-b_1(\hat{C}^\sigma)+b_2(\hat{C}^\sigma) \geq 2-b_1(\hat{C}^\sigma)$, which implies $b_1(\hat{C}^\sigma)>1$. 

Now suppose every non-separating component of $C_\tori$ is an annulus. 
If $A$ is such an annulus, then it must be adjacent to a ridge \hyperlink{term-chunk}{subchunk} $S\subset C^\sigma$ by Lemma \ref{annulusRidge}, so $A$ is also a component of $\partial_\gamma Q$ for the cable \hyperlink{term-chunk}{subchunk} $Q\subset C^\sigma$ adjacent to $S$ along $A$. As $\partial_\gamma Q$ is a trivial $S^1$-bundle over $\partial_\gamma W$, where $W$ is the compact orientable surface as described before, 
the core loop $\xi$ of $A$ covers an ordinary fiber in $Y$ under $\kappa$, so $\xi$ is sent to a cover of an ordinary fiber of the cable \hyperlink{term-piece}{piece} $J\subset M$ under the composition: $$\hat{C}^\sigma\stackrel{\varrho}\longrightarrow C^\sigma\stackrel{\subset}\longrightarrow \tilde{N}^e\stackrel{\kappa}\longrightarrow N^e\stackrel{i^e}\longrightarrow M.$$ Note $\partial A\subset\partial\hat{C}^\sigma$, and $A$ is $2$-sided.
If $\chi(\hat{C}^\sigma) <0$, as before we conclude that $b_1(\hat{C}^\sigma) >1$, so we may assume that $\chi(\partial \hat{C}^\sigma) = 2\chi(\hat{C}^\sigma)=0$. 
If a component of $\partial  \hat{C}^\sigma$ containing a component of $\partial A$ is a Klein bottle, then 
the fiber $\xi$ either lies in the boundary of a M\"obius strip, or is freely homotopic
to its orientation-reversal. The former case is impossible because otherwise $\hat{C}^\sigma-A$ would have at least one non-orientable boundary component, no matter the components of $\partial A$ lie on $1$ or $2$ boundary components of $\hat{C}^\sigma$, which contradicts $\hat{C}^\sigma-A$ being orientable; the latter case is impossible because $\xi$ covers an ordinary fiber of a cable \hyperlink{term-piece}{piece} of $M$, which cannot be freely homotopic to the orientation-reversal due to the $2$-acylindricity of the JSJ decomposition of $M$, cf. \cite[p. 298]{BW}. Thus $\partial A$ lies in (possibly the same) tori components of $\partial \hat{C}^\sigma$.
If $\hat{C}^\sigma$ is non-orientable with a torus boundary component, then $b_2(\hat{C}^\sigma)>0$ from the exact
sequence: 
$$0=H_3(\hat{C}^\sigma,T^2;\QQ) \to H_2(T^2;\QQ) \to H_2(\hat{C}^\sigma;\QQ),$$ so $b_1(\hat{C}^\sigma)>1$. So
we may assume that $\hat{C}^\sigma$ is orientable. First suppose $\partial A$ lies on a single component of $\partial\hat{C}^\sigma$. 
In this case, $\partial A$ must be separating in $\partial \hat{C}^\sigma$, because otherwise
the union of $A$ and a component of $\partial\hat{C}^\sigma-\partial A$ would be a Klein bottle,   
thus each component of $\partial A$ with the induced orientation would be freely homotopic to its 
orientation-reversal via the Klein bottle, but this is impossible since the components of 
$\partial A$ cover ordinary fibers in $Y\subset M$, which cannot be freely homotopic to 
their orientation-reversal in $M$ as before. Therefore, the union of $A$ and a component of $\partial\hat{C}^\sigma-\partial A$ is parallel to a non-separating $2$-sided torus in the interior of $\hat{C}^\sigma$, which implies $b_1(\hat{C}^\sigma)>1$ as before.
Now suppose $\partial A$ lies on two different components of $\partial\hat{C}^\sigma$, then $b_1(\hat{C}^\sigma)>1$ as $\hat{C}^\sigma$ is orientable with at least two tori boundary components. This proves the claim.

To finish the proof of this lemma, we successively add adjacent \hyperlink{term-chunk}{chunks} to $C^\sigma$. 
Let $C'$ be the union of $C^\sigma$ with all the adjacent ridge \hyperlink{term-chunk}{chunks}, 
then $H_1(C';\QQ)\cong H_1(C^\sigma;\QQ)$ so $b_1(C')>1$. 
For any maximal regular \hyperlink{term-chunk}{chunk} $R\subset C$ adjacent to $C'$, 
they are adjacent along a \hyperlink{term-ridge}{side annulus} of a ridge \hyperlink{term-chunk}{chunk} $S\subset C'$. 
If they are adjacent along an annulus $A$,
 $\partial R$ is non-empty, so $b_1(R\cup C')\geq b_1(R)+b_1(C')-b_1(A)\geq 1+2-1=2$.
 Thus, let $C''$ be the union of $C'$ with all the adjacent maximal regular \hyperlink{term-chunk}{chunks}, 
 $b_1(C'')>1$. Continuing in this fashion by induction, we see $b_1(C)>1$. 
 This is a contradiction since we have said $b_1(C)=1$.
\end{proof}

The last part of the proof of Lemma \ref{separating} is a useful argument, so we extract it as below.

\begin{lemma}\label{MVarg} In Case 1, assume all the components of $C_\tori$ are separating. If $C'\subset C$ is a \hyperlink{term-chunk}{chunk} with $b_1(C')>1$, and all \hyperlink{term-chunk}{chunks} adjacent to $C'$ are ridge \hyperlink{term-chunk}{chunks}, then $b_1(C)>1$.\end{lemma}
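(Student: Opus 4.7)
The plan is to apply a single Mayer--Vietoris computation to the natural decomposition of $C$ given by the tree of chunks. Since every component of $C_\tori$ is separating, the graph-of-spaces decomposition of $C$ yields a finite tree $\Lambda$, and the subtree $\Lambda'\subset\Lambda$ corresponding to $C'$ has complement a disjoint union of subtrees $\Lambda_1,\ldots,\Lambda_k$, each attached to $\Lambda'$ across a single edge. Writing $B_i\subset C$ for the sub-chunk corresponding to $\Lambda_i$ and $A_i=B_i\cap C'$ for the associated component of $C_\tori$, one has $C=C'\cup\bigsqcup_{i=1}^k B_i$ with $C'\cap B_i=A_i$. The hypothesis that every chunk adjacent to $C'$ is a ridge chunk means that each $B_i$ begins with a ridge chunk $S_i$, and in Case~1 Lemma~\ref{ridge torus} then forces $A_i$ to be an annulus rather than a torus.

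Next I apply Mayer--Vietoris with $\QQ$ coefficients to the pair $(C',\,\bigsqcup_i B_i)$. The connecting map $H_1(C;\QQ)\to\widetilde{H}_0(\bigsqcup_i A_i;\QQ)$ vanishes: since $C'$ is connected and each $A_i$ lies in the unique $B_i$, the natural bijection between components of $\bigsqcup_i A_i$ and of $\bigsqcup_i B_i$ makes the map $\widetilde{H}_0(\bigsqcup_i A_i)\to\widetilde{H}_0(C')\oplus\widetilde{H}_0(\bigsqcup_i B_i)$ injective. The resulting short exact sequence together with $b_1(A_i)=1$ yields
$$b_1(C)\;\geq\; b_1(C')+\sum_{i=1}^k b_1(B_i)-\sum_{i=1}^k b_1(A_i)\;=\;b_1(C')+\sum_{i=1}^k\bigl(b_1(B_i)-1\bigr).$$

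The remaining step is to verify $b_1(B_i)\geq 1$ for every $i$, making the correction non-negative. After Lemma~\ref{simplifyCore} the Scott core has no contractible chunk, so the ridge chunk $S_i\subset B_i$ has $b_1(S_i)\geq 1$ by Lemma~\ref{trivialCk}. Because the chunks of $C$ are glued along $\pi_1$-injective annuli and tori in the construction of Proposition~\ref{ScottCore}, the inclusion $S_i\hookrightarrow B_i$ is $\pi_1$-injective, so $\pi_1(B_i)\neq 1$ and $B_i$ is non-contractible. Applying Lemma~\ref{trivialCk} once more to the chunk $B_i$ gives $b_1(B_i)\geq 1$, and therefore $b_1(C)\geq b_1(C')>1$, as required. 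I expect the only mildly delicate point to be the vanishing of the connecting homomorphism, but this is forced automatically by the tree structure of $\Lambda$ together with the bijection between the $A_i$ and the $B_i$.
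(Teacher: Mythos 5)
Your proof is correct and is essentially the same Mayer--Vietoris argument the paper uses (inherited from the last paragraph of the proof of Lemma~\ref{separating}); the only difference is that you apply a single global Mayer--Vietoris sequence to the decomposition $C=C'\cup\bigsqcup_i B_i$, which forces you to handle the connecting map via the tree structure and the $A_i\leftrightarrow B_i$ bijection, whereas the paper attaches chunks iteratively one at a time so that all pieces stay connected and the $\widetilde H_0$-term vanishes automatically at each step. Both routes ultimately rest on the same facts---components of $C_\tori$ are separating annuli with $b_1=1$, and non-contractible chunks have $b_1\geq 1$ by Lemmas~\ref{simplifyCore} and~\ref{trivialCk}.
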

\begin{proof} Similar to the last paragraph in the proof of Lemma \ref{separating}, successively enlarge $C'$ by attaching all the adjacent ridge \hyperlink{term-chunk}{chunks}, and then all the adjacent maximal regular \hyperlink{term-chunk}{chunks}, and continue alternately in this fashion.\end{proof}

Lemmas \ref{separating} and \ref{MVarg} allow us to use Mayer-Vietoris arguments based on the homology of cable \hyperlink{term-chunk}{chunks}. This is carried out in the lemma below. Remember we have assumed any cable \hyperlink{term-chunk}{chunk} $Q$ is a trivial $S^1$-bundle over a compact orientable surface $W$.

\begin{lemma}\label{CblCkBdry} With notation as above, for any cable \hyperlink{term-chunk}{chunk} $Q\subset C$, the base surface $W$ is planar, and ${\partial_\pat}W\sqcup\partial_\gamma W$ are arcs contained in a single component of $\partial W$. Hence $W$ is homeomorphic to a regular neighborhood of the union of $\partial_\tori W$ together with an embedded simplicial tree of which each end-point lies on a distinct component of $\partial_\tori W$, connecting all the components of $\partial_\tori W$.\end{lemma}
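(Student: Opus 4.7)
The plan is to deduce the structural description of $W$ from the rigid constraints $b_1(C)=1$ and $b_2(C)=0$, together with Lemmas \ref{annulusRidge}, \ref{ridge torus}, \ref{separating}, and \ref{MVarg}. First I would record the combinatorial structure of $\partial_\tori Q$: by Lemma \ref{annulusRidge} every annulus component of $C_\tori$ is adjacent to a ridge chunk, and since the ridge piece $Z$ of $N^e$ is adjacent to $Y$ only across the drilling torus $T$, the annulus components of $\partial_\tori Q$ must all lie in $\partial_\gamma Q$; conversely Lemma \ref{ridge torus} forces every component of $\partial_\gamma Q$ to be an annulus rather than a torus. Translated to the base $W$: $\partial_\gamma W$ consists of arcs, while $\partial_\pat W$ and $\partial_\cpn W$ would consist, if non-empty, of full boundary circles of $W$.

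The three conclusions to prove are (a) $\partial_\pat W=\emptyset$, (b) $W$ is planar, and (c) the arcs of $\partial_\gamma W$ all lie on a single boundary component of $W$. Each is proved by contradiction. For (a), a putative loop $\lambda\subset\partial_\pat W$ gives a torus $T_0=\lambda\times S^1\subset\partial_\pat Q$; by Lemma \ref{separating} this separates $C=C_1\cup_{T_0}C_2$ with $Q\subset C_1$. Mayer--Vietoris together with $b_1(C)=1$ and $b_2(C)=0$ yields $b_2(C_1)+b_2(C_2)=1$ and $b_1(C_1)+b_1(C_2)=3$, but a variant of the argument of Lemma \ref{ridge torus} (using ridge chunks adjacent to $Q$ across $\partial_\gamma Q$ annuli, and an analogous structure in the parent-side sub-core) produces non-trivial $H_2$-classes on both sides, a contradiction. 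For (b), positive genus of $W$ contributes genus cycles to $H_1(Q)=H_1(W)\oplus\ZZ$ that are not killed by the peripheral gluings to neighbouring chunks, since adjacent amalgamations identify only classes coming from $H_1(\partial_\tori Q)$; Lemma \ref{MVarg} then propagates the excess to $b_1(C)>1$. For (c), arcs of $\partial_\gamma W$ on two distinct boundary components of $W$ produce two ridge-chunk neighbours of $Q$ which can be joined through $Q$ by a loop independent of the fiber class $\tau$, again forcing $b_1(C)>1$ via Lemma \ref{MVarg}. The final ``Hence'' sentence is then formal: a planar surface whose $\partial_\tori$-data is concentrated as described admits the asserted spine up to isotopy.

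The main obstacle will be the Mayer--Vietoris bookkeeping in step (a), where one must exhibit non-trivial $H_2$-classes on both sides of $T_0$ despite the subtleties of ridge chunks and the a priori possibility of the parent piece being hyperbolic or of a different Seifert type. The tree structure guaranteed by Lemma \ref{separating}, together with the propagation Lemma \ref{MVarg}, reduces this global computation to a local one near the offending chunk, keeping the verification tractable though technical.
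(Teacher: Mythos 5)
Your structural reorganization into (a) $\partial_\pat W=\emptyset$, (b) $W$ planar, (c) single boundary component, with Lemmas~\ref{separating} and~\ref{MVarg} used to propagate a local $b_1$-excess out to $C$, is the right shape of argument, and your treatment of (b) and (c) is recognizably the paper's strategy (though for (b) the paper argues via a non-separating torus giving $b_2(C')\geq 1$, hence $b_1(C')\geq 2$ from $\chi\leq 0$, rather than via ``genus cycles not killed by gluings,'' which would require more care to justify through Mayer--Vietoris). Your initial observation that Lemmas~\ref{annulusRidge} and~\ref{ridge torus} force $\partial_\gamma Q$ to be annuli and $\partial_\pat Q,\partial_\cpn Q$ to be tori is correct and is the same dichotomy the paper uses.

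The genuine gap is in step (a). You propose separating $C$ along the putative torus $T_0\subset\partial_\pat Q$ and running a Mayer--Vietoris count of $b_1$ and $b_2$ on the two sides, and you acknowledge this as ``the main obstacle'' without resolving it. That bookkeeping is both unnecessary and fragile. The paper's argument is one line, and it is precisely the argument you already cite as ``a variant of Lemma~\ref{ridge torus},'' applied not to a ridge torus but to $\partial_\pat Y$: the torus $\partial_\pat Y=\partial_\pat J$ represents a nonzero class in $H_2(N^e;\QQ)$ (it either is parallel to $\partial M$ or separates $N^e$ into two pieces each with other boundary, and the drilling boundary ensures this class survives rationally; this same fact is used again in Lemma~\ref{notOnto} and in Case~2), so a component $T_0\subset\partial_\pat Q$ covering it satisfies $0\neq[T_0]\in H_2(C;\QQ)$, directly contradicting $b_2(C)=0$ which was established before Lemma~\ref{annulusRidge}. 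No splitting, no two-sided $H_2$ construction. You should have recognized that once $b_2(C)=0$ is in hand, any torus of $C_\tori$ covering a rationally essential torus of $N^e$ is immediately forbidden -- that is exactly what Lemma~\ref{ridge torus} does for $\partial_\gamma Y$, and it applies verbatim to $\partial_\pat Y$.

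A secondary point: your sketch for (c) (``two ridge-chunk neighbours joined through $Q$ by a loop independent of the fiber class'') is too loose to stand on its own. The paper's version tracks the quantity $b_1(Q')>1+k'$ with $k'$ the number of remaining $\partial_\cpn$-tori as chunks are adjoined, so that only when the enlargement reaches the maximal regular chunk (which has $\partial_\pat=\partial_\cpn=\emptyset$) does the inequality close to $b_1(C')>1$, and only then does Lemma~\ref{MVarg} apply. Your sketch omits this bookkeeping, and it is not automatic that a loop in $Q$ ``independent of the fiber class'' survives in $H_1(C)$.
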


\begin{proof} 
Suppose $W$ were not planar, there is an embedded non-separating torus $T\subset Q$ which is the sub-$S^1$-bundle over a non-separating simple closed curve of $W$. Let $C'$ be the maximal regular \hyperlink{term-chunk}{chunk} containing $W$. First suppose $\partial_\tori C'$ is empty, then $C=C'$ but $\partial C$ is non-empty with no sphere component. Thus $b_1(C)>1$ since it has non-empty aspherical boundary and a non-separating embedded torus, contrary to $b_1(\tilde{N}^e)=1$. Now suppose $\partial_\tori C'$ is non-empty, for the same reason as above, $b_1(C')>1$. By Lemmas \ref{separating}, \ref{MVarg}, $b_1(\tilde{N}^e)=b_1(C)>1$, contrary to $b_1(\tilde{N}^e)=1$. 

To see the second part, we first show ${\partial_\pat}W\sqcup\partial_\gamma W$ is contained in a single component of $\partial W$. Suppose on the contrary that ${\partial_\pat}W\sqcup\partial_\gamma W$ meets at least $2$ components of $\partial W$, then $b_1(Q)>1+k$ where $k\geq 0$ is the number of torus components of ${\partial_\cpn} Q$. Let $R\subset C$ be any maximal regular \hyperlink{term-chunk}{chunk} adjacent to $Q$ that misses the interior of $C\cap\kappa^{-1}Y$. Note if $R$ is adjacent to $Q$ along any torus component of ${\partial_\pat} Q\sqcup\partial_\gamma Q$, then $R$ has some boundary component other than this torus. Thus, let $Q'$ be $Q$ together with all such \hyperlink{term-chunk}{chunks}. A Mayer-Vietoris argument shows $b_1(Q')>1+k'$ where $k'$ is the number of torus components of ${\partial_\cpn} Q'$. Continuing in this fashion, in the end, we have $b_1(C')>1$, where $C'\subset C$ is the maximal regular \hyperlink{term-chunk}{chunk} containing $Q$, which must have no torus components of ${\partial_\cpn} C'$, (indeed, ${\partial_\pat}C'\sqcup{\partial_\cpn}C'=\emptyset$). By Lemmas \ref{separating}, \ref{MVarg}, again we obtain a contradiction. 

Now we show $\partial_\pat W\sqcup\partial_\gamma W$ are disjoint arcs rather than a single loop. Otherwise there would be two cases. If $\partial_\pat W\sqcup\partial_\gamma W=\partial_\pat W$ were a single loop, then $\partial_\pat Q$ would contain a torus mapping to the homologically non-trivial torus $\partial_\pat Y$. Thus $b_2(C)>0$, a contradiction. If $\partial_\pat W\sqcup\partial_\gamma W=\partial_\gamma W$ were a single loop, then $\partial_\gamma Q$ would contain a torus, contradicting Lemma \ref{ridge torus}.

The `hence' part is an immediate consequence from the first part.
\end{proof}

To finish the proof of Case 1, we observe the lemma below. Remember $J$ is a $p/q$-cable pattern \hyperlink{term-piece}{piece}, where $q>1$, and \textit{a priori} $M=M_\pat\cup J\cup M_\cpn$, where $M_\pat$ and $M_\cpn$ are the pattern component and the companion component of $M-J$, respectively. Recall $m$ is the denominator of the Dehn extension $N^e$. Let $q'=q/\gcd(q,m)$. 

\begin{lemma}\label{notOnto} In the current situation, $M=J\cup M_\cpn$, and the image of $(i^e\circ\kappa)_*:H_1(\tilde{N}^e)\to H_1(M)\cong\ZZ$ is contained in $q'\ZZ$.\end{lemma}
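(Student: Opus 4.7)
My plan is to prove both parts in parallel, using the Scott-core analysis of $\tilde{N}^e$ together with an explicit computation of $H_1(N^e)$. A Mayer--Vietoris argument for $N^e = N\cup_T Z$ gives $H_1(N^e) = \ZZ\mu \oplus \ZZ r$, where $r$ is the ridge loop of $Z$, with the relations $\zeta = m r$ (so the Dehn extension makes the $\gamma$-meridian $m$-divisible) and $\ell = q\mu + c\zeta$ for some integer $c$ encoding the cabling. The map $(i^e)_*$ sends $\mu \mapsto \mu$ and $r \mapsto 0$, so the $m$-divisibility of $\zeta$ in $H_1(N^e)$ is precisely the arithmetic source of the factor $q' = q/\gcd(q,m)$.

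For Part 1, I would argue by contradiction. Suppose $M_\pat \neq \emptyset$, so there is a JSJ piece $J_\pat$ of $M$ adjacent to $J$ along $\partial_\pat J$. Since $(i^e \circ \kappa)_\sharp$ is $\pi_1$-surjective (because $f = i^e \circ \kappa \circ \tilde f^e$ is), the Scott core $C$ must contain a chunk $Q_\pat$ covering a component of $\kappa^{-1}(J_\pat)$, adjacent to a cable chunk $Q$ across a component $A \subset C_\tori$ covering $\partial_\pat J$. By Lemma~\ref{CblCkBdry}, $\partial_\pat Q$ is a disjoint union of annuli, so $A$ is an annulus; the reductions in Lemma~\ref{simplifyCore} rule out the contractible and non-central elementary cases for $Q_\pat$. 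In the non-elementary case, $\chi(Q_\pat) < 0$, and since chunks meet along $\chi = 0$ surfaces, $\chi(C) < 0$, hence $b_1(C) > 1$, contradicting $b_1(\tilde N^e) = 1$. In the central case, $Q_\pat \cong D^2 \times S^1$, so the core of $A$ is a nonzero power of the Seifert fiber $\phi_{J_\pat}$; but on the cable side the same core is an $S^1$-fiber of $Q = W \times S^1$, hence a power of $\phi_Y$. This forces the Seifert fiber slopes of $Y$ and $J_\pat$ to coincide on the common JSJ torus $\partial_\pat J$, violating the canonicality of the JSJ decomposition of $M$ (equivalently, the $2$-acylindricity used in Lemma~\ref{pieceNumber}). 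Hence $M_\pat = \emptyset$.

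For Part 2, with $M = J \cup M_\cpn$, I would compute the image of $(i^e \circ \kappa)_*$ by tracking $\mu$-content chunk-by-chunk. The key input from Lemma~\ref{CblCkBdry} is that in a cable chunk $Q = W \times S^1$ with planar $W$, the pattern- and $\gamma$-arcs lie on a single boundary circle $\beta$ of $W$, with the planarity relation $[\beta] + \sum_i[\beta_i] = 0$ in $H_1(W)$. Under $\kappa$ the fiber $\phi_Q$ maps to $d\phi_Y$ with $[\phi_Y]_M = pq\mu$, contributing $dpq\mu \in q\ZZ\mu \subset q'\ZZ\mu$. A $\partial_\gamma$-arc continues into a ridge chunk inside $Z$, where the identification $\zeta = mr$ combined with $\ell = q\mu + cmr$ forces any $\mu$-content contributed through the ridge to be divisible by $q/\gcd(q,m) = q'$. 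Pattern arcs along $\beta$ can be closed up via the planarity relation as sums of $\gamma$-arc, fiber, and companion-arc contributions, all of which have $\mu$-coordinate in $q'\ZZ$. Companion-side contributions enter $M_\cpn$ through $\partial_\cpn J$, where the cabling-node identification $\mu_{\cpn_1} = \lambda_\pat^{(\text{companion})}$ and the null-homologicity of $\lambda_\pat^{(\text{companion})}$ in $M_\cpn$ give another $q$-divisible $\mu$-count. Summing over all chunks and edges of the graph-of-spaces decomposition of $C$, every class in $\kappa_*(H_1(C))$ has $\mu$-coordinate in $q'\ZZ$, yielding the stated containment. The main obstacle I anticipate is the slope arithmetic at $\partial_\pat J$ and $\partial_\cpn J$: one must combine the planarity relation and the Dehn-extension amalgamation carefully, uniformly over all possible configurations of $C_\tori$-components, to confirm that no contribution outside $q'\ZZ$ can slip in.
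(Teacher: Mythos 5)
Your Part 1 argument takes a genuinely different route from the paper. The paper argues directly: if $M_\pat\neq\emptyset$ then $S=\partial_\pat J$ represents a nonzero class in $H_2(N^e;\QQ)$, $\kappa^{-1}(S)\cap C$ is nonempty by $\pi_1$-surjectivity, Lemma~\ref{annulusRidge} forces each of its components to be a torus (an annulus would have to be adjacent to a ridge chunk, but $\partial_\pat J$ is not adjacent to the ridge piece), and a torus lift gives $b_2(C)>0$, contradicting $b_1(\tilde N^e)=1$. Your route instead runs an Euler-characteristic/Seifert-fiber case analysis on the adjacent chunk $Q_\pat$. This can be made to work, but it is both longer and, as written, circuitous: once you have invoked Lemma~\ref{CblCkBdry} to conclude that $\partial_\pat Q$ consists of annuli, Lemma~\ref{annulusRidge} already gives an immediate contradiction (those annuli are not adjacent to a ridge chunk). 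Your non-elementary/central case analysis is essentially re-deriving, in a special case, the content of Lemma~\ref{annulusRidge}, and the ``central $D^2\times S^1$'' branch requires care that is only gestured at (you need to extract from $\pi_1(Q_\pat)$ being a central subgroup that the slope covered by the core of $A$ is the $J_\pat$-fiber slope, and that this is inconsistent with it also being the $Y$-fiber slope under the $2$-acylindricity of the JSJ decomposition).

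Part 2 has a real gap. You correctly identify $H_1(N^e)\cong\ZZ\mu\oplus\ZZ r$ with $\zeta=mr$ and $(i^e)_*(r)=0$, and you correctly note that cable chunks and $M_\cpn$-chunks contribute in $q\ZZ$ (via $[\phi_Y]_M=pq\mu$ and the cabling inclusion). But the whole point of the lemma is the ridge chunks, and that is exactly the step you assert without proof: ``the identification $\zeta=mr$ combined with $\ell=q\mu+cmr$ forces any $\mu$-content contributed through the ridge to be divisible by $q/\gcd(q,m)=q'$.'' First, the relation is arithmetically off: the longitude $\ell$ of the drilling torus is a parallel copy of $\gamma$, and $[\gamma]_M=p\mu$ (since $[\phi]_M=pq\mu$ and $[\phi]=q[\gamma]$ near the exceptional fiber), so the relation should read $\ell = p\mu + c\zeta$, not $q\mu+c\zeta$. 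Second, and more seriously, nothing in the relations alone yields $q'$-divisibility: $(i^e)_*$ sends $\ell'$ to $p\mu$, and $p$ is coprime to $q'$, so if the ridge chunk contributed $\ell'$ itself you would get $p\mu\notin q'\ZZ\mu$. The reason this does not happen is the structural fact (from Lemma~\ref{CblCkBdry} plus Lemma~\ref{ridge torus}) that a ridge chunk $S$ is $\mathcal{M}_u\times I$ with $H_1(S)\cong\ZZ$, generated by a class whose $\ell'$-coefficient is $q/\deg(u)$; and the constraint that $\phi|_T/\deg(u)$ lie in the extended lattice $\frac{1}{m}\ZZ\oplus\ZZ$ forces $\deg(u)\mid\gcd(m,q)=m'$, hence $q/\deg(u)\in q'\ZZ$. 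This maximal-root computation is the heart of the lemma and cannot be skipped. Finally, you also need Lemma~\ref{separating} (no nonseparating component of $C\cap\kappa^{-1}(\partial Y)$) to make the Mayer--Vietoris summation of per-chunk contributions rigorous; the ``summing over all chunks'' step is invalid without it.
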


\begin{proof} We first show $M=J\cup M_\cpn$, namely $\partial_\pat J$ is parallel to $\partial M$. Suppose this is not the case, then let
$S=\partial_\pat J$ be the pattern boundary of $J$, then $S\subset N^e$. 
Notice that $0\neq [S]\in H_2(N^e;\QQ)$. If $\kappa^{-1}(S) = \emptyset$, then 
$f:K \to N$ misses $S$ up to homotopy, and we conclude that $\pi_1(K) \ntwoheadrightarrow \pi_1(M)$ unless
$J$ is the \hyperlink{term-rootedJSJTree}{root} of the JSJ tree. 
So $\kappa^{-1}(S)$ is non-empty, and by Lemma \ref{annulusRidge}, each component $\tilde{S}$ of 
$\kappa^{-1}(S)$ is a torus. But then $0\neq [\tilde{S}]\in H_2(\tilde{N}^e;\QQ)$, since $\kappa_|: H_2(\tilde{S};\QQ) \to H_2(S;\QQ)$
 is non-zero. This implies that $b_2(\tilde{N}^e) >0$, a contradiction. 

Next, we show the image of $H_1(\tilde{N}^e)\to H_1(M)\cong\ZZ$ is contained in $q'\ZZ$.
It suffices to show for $H_1(C)\to H_1(M)$ as $C\subset\tilde{N}^e$ is a homotopy equivalence. 
Now $C$ is the union of cable \hyperlink{term-chunk}{chunks}, 
ridge \hyperlink{term-chunk}{chunks} and regular $M_\cpn$-\hyperlink{term-chunk}{chunks}, 
namely the components of $C\cap\kappa^{-1}(M_\cpn)$. By the cabling construction of knots, 
the image of $H_1(M_\cpn)\to H_1(M)\cong\ZZ$ is contained in $q\ZZ$, 
so every $M_\cpn$-\hyperlink{term-chunk}{chunk} maps into $q\ZZ$ on homology. 
Note the ordinary fiber of $J$ also lies in $q\ZZ$, by Lemma \ref{CblCkBdry}, 
every cable \hyperlink{term-chunk}{chunk} also maps into $q\ZZ$ on homology. 
Note every ridge \hyperlink{term-chunk}{chunk} is adjacent to at least one cable \hyperlink{term-chunk}{chunk}, 
Lemma \ref{CblCkBdry} implies any ridge \hyperlink{term-chunk}{chunk} $S$ is homeomorphic to $\mathcal{M}_u\times I$, 
where $u:S^1\to S^1$ is a finite cyclic cover (from an ordinary Seifert fiber in $T$), and $\mathcal{M}_u$ is the mapping cylinder $u$.   
To bound the degree of
$u$, we take a basis for the homology of the torus $H_1(T)\cong \ZZ\oplus\ZZ$ with meridian $(1,0)$ and longitude $(0,1)$. Then the fiber slope is $(p,q)$. 
When we take the \hyperlink{term-DehnExt}{Dehn extension}, we embed $\ZZ + \ZZ \subset \frac{1}{m} \ZZ + \ZZ$. Letting $m'=\gcd(m,q)$, we see that the maximal root of the fiber
slope $(p,q)$ in  $\frac{1}{m} \ZZ + \ZZ$ is $(p/m', q/m')= (p/m', q')$. Thus, the degree of the map $u$ must divide $m'$ since $\mathcal{M}_u$ is a core for a cyclic cover of the ridge chunk $Z$. 
 This implies every ridge \hyperlink{term-chunk}{chunk} maps into $q'\ZZ$ on homology. Because $C\cap\kappa^{-1}(\partial Y)$, where $Y=J-\gamma$, has no non-separating component by Lemma \ref{separating}, the Mayer-Vietoris sequence implies $C$ maps into $q'\ZZ$ on homology. This completes the proof of the second part.
\end{proof}

When $q>m$, then $q'>1$, so Lemma \ref{notOnto} gives a contradiction to the assumption that $f:K\to M$ is $\pi_1$-surjective as $f$ is homotopic to the composition: $$K\stackrel{\tilde{f}^e}\longrightarrow \tilde{N}^e\stackrel{\kappa}\longrightarrow N^e\stackrel{i^e}\longrightarrow M.$$ Thus, $q\leq m\leq T(\ell(G))$ by Thoerem \ref{factor-SF}. This completes the proof of Case 1.

\bigskip\noindent\textbf{Case 2}. Homeomorphism types of torus-knot \hyperlink{term-piece}{pieces}.

We'll use the same notation as the beginning of the proof of Case 1.
In fact, the $p/q$-torus-knot complement $J$ is a Seifert fibered space over the base $2$-orbifold $S^2(p,q,\infty)$. 
If it is the $q$-exceptional fiber $\gamma$ (i.e. the fiber over the cone point correpsonding to $q$) 
that has been drilled out for the \hyperlink{term-DehnExt}{Dehn extension}, let $N^e= N \cup_T Z$
as in Case 1. Let $S=\partial_\pat J$ be the pattern boundary of $J$, then $S\subset N^e$. 
Notice that $0\neq [S]\in H_2(N^e;\QQ)$. If $\kappa^{-1}(S) = \emptyset$, then 
$f:K \to N$ misses $S$ up to homotopy, and we conclude that $\pi_1(K) \ntwoheadrightarrow \pi_1(M)$ unless
$J$ is the \hyperlink{term-rootedJSJTree}{root} of the JSJ tree. 
So $\kappa^{-1}(S)$ is non-empty, and by Lemma \ref{annulusRidge}, each component $\tilde{S}$ of 
$\kappa^{-1}(S)$ is a torus. But then $0\neq [\tilde{S}]\in H_2(\tilde{N}^e;\QQ)$, since $\kappa_|: H_2(\tilde{S};\QQ) \to H_2(S;\QQ)$
is non-zero. This implies that $b_2(\tilde{N}^e) >0$, a contradiction. 

The only possibility if $J$ is the \hyperlink{term-rootedJSJTree}{root} of the JSJ tree of $M$ is that $J=M$, i.e. $M$ is a torus knot complement. 
In this case, we obtain a contradiction by showing that covers $\tilde{N}^e\to N^e$ with $b_1(\tilde{N}^e)=1$
are elementary as in the proof of Lemma \ref{CblCkBdry}.

\bigskip Cases 1 and 2 together completes the proof of Proposition \ref{SFPieces}.

\section{Choices of parent meridians and children longitudes}\label{Sec-clTypes}
We shall finish the proof of Theorem \ref{main} in this section. 
Up to now, it remains to bound the allowable choices of \hyperlink{term-node}{children longitudes} 
on an allowable JSJ \hyperlink{term-piece}{piece}. Provided Propositions \ref{hypPieces}, \ref{SFPieces}, 
this will bound the allowable isomorphism types of \hyperlink{term-node}{nodes}.

\begin{lemma}\label{hypCl} Let $G$ be a finitely presented group with $b_1(G)=1$. 
If $J$ is an orientable compact $3$-manifold homeomorphic to a hyperbolic 
\hyperlink{term-piece}{piece} of some knot complement $M$ such that $G$ maps onto $\pi_1(M)$, 
then there are at most finitely many choices of slopes $\mu_\pat, \seq{\lambda_{\cpn_1},\cdots,\lambda_{\cpn_r}}$ 
on $\partial J$, depending only on $G$ and $J$, so that $(J,\mu_\pat, \seq{\lambda_{\cpn_1},\cdots,\lambda_{\cpn_r}})$ 
is a hyperbolic \hyperlink{term-node}{node} decorating a vertex of the \hyperlink{term-rootedJSJTree}{rooted JSJ tree} 
for some such $M$.\end{lemma}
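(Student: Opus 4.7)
The plan is to reduce to bounding the children longitudes and then exploit the factorization theorems via a simplification of the knot complement. By the remark following Definition \ref{node}, in a hyperbolic node the parent meridian $\mu_\pat$ has at most three choices once the children longitudes $\{\lambda_{\cpn_i}\}$ are fixed. Since $J$ has a fixed, finite number of boundary components, it suffices to bound the choices of $\lambda_{\cpn_i}$ on each single child boundary component, say $\partial_{\cpn_1} J$. Arguing by contradiction, I would suppose that an infinite family of distinct slopes $\lambda_n$ on $\partial_{\cpn_1} J$ arises from a sequence of knot complements $M_n$ with epimorphisms $\phi_n : G \twoheadrightarrow \pi_1(M_n)$, each realizing $J$ as a hyperbolic JSJ piece. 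Since only finitely many slopes on the horocusp of $J$ at $\partial_{\cpn_1} J$ have length below any given bound, the slopes $\lambda_n$ have length tending to $\infty$.

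Next I would construct a simplified knot complement $M_n^{\mathtt{simp}}$ from $M_n$ by replacing the companion subtree attached at $\partial_{\cpn_1} J$ with a solid torus. Explicitly, if $V_1 \subset S^3$ is the companion solid torus on the far side of $\partial_{\cpn_1} J$ and $k_n\cap V_1$ is the pattern of the knot $k_n$ inside $V_1$, then $M_n^{\mathtt{simp}} = (M_n - (V_1 - k_n)) \cup V_1 \cong S^3 - k_n^{\mathtt{simp}}$, where $k_n^{\mathtt{simp}}$ is the knot obtained by unknotting the pattern of $k_n$ inside $V_1$. Since the inclusion $V_1 - k_n \hookrightarrow V_1$ induces a surjection onto $\pi_1(V_1) \cong \ZZ$, Van Kampen gives a further epimorphism $\pi_1(M_n) \twoheadrightarrow \pi_1(M_n^{\mathtt{simp}})$; composing with $\phi_n$ yields $G \twoheadrightarrow \pi_1(M_n^{\mathtt{simp}})$. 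By Thurston's hyperbolic Dehn surgery theorem, for $n$ sufficiently large the filling $J(\lambda_n)$ on $\partial_{\cpn_1} J$ is hyperbolic and sits as a JSJ piece of $M_n^{\mathtt{simp}}$, containing a closed core geodesic $\gamma_n$ of length tending to $0$.

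Then I would apply Theorem \ref{factor-hyp} to $\gamma_n \subset J(\lambda_n) \subset M_n^{\mathtt{simp}}$: for $n$ large, the epimorphism $G \twoheadrightarrow \pi_1(M_n^{\mathtt{simp}})$ factors through the extended Dehn filling $\iota^e : \pi_1(N_n^e) \to \pi_1(M_n^{\mathtt{simp}})$ of denominator $m \leq T(\ell(G))$, where $N_n = M_n^{\mathtt{simp}} - \gamma_n$. Drilling $\gamma_n$ undoes the filling, so up to a trivial collar $N_n$ is homeomorphic to $M_n$ with the companion subtree at $\partial_{\cpn_1} J$ excised, and $J$ persists as a hyperbolic JSJ piece of $N_n^e$ adjacent to the ridge piece $Z(m)$. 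Passing to the cover $\kappa : \tilde{N}_n^e \to N_n^e$ corresponding to the image of $G$, the assumption $b_1(G) = 1$ forces $b_1(\tilde{N}_n^e) \leq 1$. By Proposition \ref{ScottCore} and Lemma \ref{simplifyCore}, I obtain an aspherical Scott core $C$ free of contractible and non-central elementary chunks, and the standard homological computation yields $b_2(C) = 0$ and $\chi(C) = 0$.

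The rest of the argument is modeled verbatim on the proof of Proposition \ref{hypPieces}. Since the composition $G \to \pi_1(N_n^e) \to \pi_1(M_n^{\mathtt{simp}})$ is $\pi_1$-surjective and $\pi_1(J(\lambda_n))$ is non-elementary, $C$ must contain a hyperbolic chunk $Q$ covering $J$ (otherwise the composed map would miss $J(\lambda_n)$, contradicting surjectivity by Van Kampen). If $\pi_1(Q) \to \pi_1(J)$ has finite index, then a torus component $\tilde T \subset \partial Q$ covers $\partial_{\cpn_1} J$ adjacent to $Z(m)$ and represents a nonzero class in $H_2(C;\QQ)$, contradicting $b_2(C) = 0$. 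If the index is infinite, then $\pi_1(Q)$ is a non-elementary Kleinian subgroup of infinite covolume, so $\chi(Q) < 0$, whence $\chi(C) \leq \chi(Q) < 0$, contradicting $\chi(C) = 0$. Either case yields a contradiction, completing the proof. The hard part will be setting up the simplification: verifying that $J(\lambda_n)$ is indeed a hyperbolic JSJ piece of $M_n^{\mathtt{simp}}$ for generic long $\lambda_n$ (so no unexpected merging occurs with adjacent pieces), and identifying the collar structure of $N_n$ precisely enough that the Scott-core analysis of Proposition \ref{hypPieces} applies with $J$ in place of $Y$ and $Z(m)$ in place of $Z$.
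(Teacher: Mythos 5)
Your construction of $M_n^{\mathtt{simp}}$ by filling in the companion at $\partial_{\cpn_1}J$ is exactly the ``de-satellitation'' map used in the paper's proof, and the observation that the filled piece $J(\lambda_n)$ is a hyperbolic JSJ piece of $M_n^{\mathtt{simp}}$ for generic long $\lambda_n$, pairwise non-homeomorphic for infinitely many $n$ by J{\o}rgensen--Thurston, is also identical. At that point, however, the paper simply invokes Proposition~\ref{hypPieces}: since $G$ maps onto $\pi_1(M_n^{\mathtt{simp}})$ via the degree-one de-satellitation map, and the $J(\lambda_n)$ realize infinitely many homeomorphism types of hyperbolic pieces of such knot complements, this directly contradicts the finiteness in Proposition~\ref{hypPieces}, and the proof ends. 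The entire second half of your proposal---applying Theorem~\ref{factor-hyp} to the core geodesic $\gamma_n$, drilling to recover $J$, forming the Scott core, and running the $\chi$ and $b_2$ argument---is a verbatim re-derivation of Proposition~\ref{hypPieces} specialized to this family, so while it is not wrong, it is logically redundant. The subtleties you flag at the end (that $J(\lambda_n)$ really is a JSJ piece, that no merging with adjacent pieces occurs, the precise collar structure of $N_n$) are real, but they are already absorbed into the proof of Proposition~\ref{hypPieces}; once you cite the proposition instead of reproving it, you never need to confront them. You also gain a small simplification by not needing to identify $\gamma_n$ with the core of the filling solid torus, since Proposition~\ref{hypPieces} does not ask you to undo any particular filling.
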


\begin{proof}
Up to finitely many choices, we may assume the \hyperlink{term-node}{children boundaries} $\partial_{\cpn_1} J\sqcup\cdots\sqcup\partial_{\cpn_r} J$ and the \hyperlink{term-node}{parent boundary} ${\partial_\pat} J$ are assigned, (cf. the remark of Definition \ref{node}). Let $K$ be a finite presentation $2$-complex of $G$ as before. We need only show that there are finitely many possible choices for the children longitudes, since for each such choice, there are only finitely many possible \hyperlink{term-node}{parent meridian} choices. 

To argue by contradiction, suppose there are infinitely many $\pi_1$-surjective maps
$f_n:K\to M_n$ such that $J$ is a hyperbolic \hyperlink{term-piece}{piece} of a knot complement $M_n$ with the \hyperlink{term-node}{children} \hyperlink{term-node}{and parent} \hyperlink{term-node}{boundaries} compatible with the \hyperlink{term-rootedJSJTree}{rooted JSJ tree} of $M_n$, and that the sets of \hyperlink{term-node}{children} \hyperlink{term-node}{longitudes} $\seq{\lambda_{\cpn_1,n},\cdots,\lambda_{\cpn_r,n}}$ are distinct up to isotopy on $\partial J$ for different $n$'s. 
After passing to a subsequence and re-indexing the \hyperlink{term-node}{children boundaries} if necessary, we may assume that $\lambda_{\cpn_1,n}\subset\partial_{\cpn_1} J$ are distinct slopes for different $n$'s, without loss of generality .

For every $n$, we may write $M_n=M_{\pat,n}\cup J\cup M_{\cpn_1,n}\cup\cdots\cup M_{\cpn_r,n}$, where $M_{\pat,n}$ and $M_{\cpn_i,n}$'s are the components of $M_n-J$ adjacent to $J$ along $\partial_\pat J$ and $\partial_{\cpn_i}J$'s, respectively. ($M_{\pat,n}$ is possibly empty if $\partial_\pat J$ equals $\partial M_n$). Let $J'_n$ be the Dehn filling of $J$ along $\lambda_{\cpn_1,n}$, and $M'_n=M_{\pat,n}\cup J'_n\cup M_{\cpn_2,n}\cup\cdots\cup M_{\cpn_r,n}$. Then $M_n'$ is still a knot complement and there is a `de-satellitation' map:
$$\alpha_n:M_n\to M'_n,$$
namely, such that $\alpha_n|:M_{\cpn_1,n}\to S^1\times D^2\cong J'_n-J$ is the degree-one map induced by the abelianization $\pi_1(M_{\cpn_1,n})\to\ZZ$, and that $\alpha_n$ is the identity restricted to the rest of $M_n$. Since $\alpha_n$ is degree-one and hence $\pi_1$-surjective, $f'_n=\alpha_n\circ f_n:K\to M'_n$ is $\pi_1$-surjective for any $n$. However, by the J{\o}rgensen-Thurston theorem on hyperbolic Dehn fillings (\cite[Theorem 5.12.1]{Th}),  for all but finitely many slopes $\lambda_{\cpn_1,n}$, $J'_n$ are hyperbolic and mutually distinct. Thus $J'_n$'s are homeomorphically distinct hyperbolic \hyperlink{term-piece}{pieces} of $M'_n$'s, but this contradicts Proposition \ref{hypPieces} that there are only finitely many allowable homeomorphism types of hyperbolic \hyperlink{term-piece}{pieces}.
\end{proof}

\begin{lemma}\label{SFCl} 
Let $G$ be a finitely presented group with $b_1(G)=1$. If $J$ is an orientable compact $3$-manifold 
homeomorphic to a Seifert fibered \hyperlink{term-piece}{piece} of some knot complement $M$ 
such that $G$ maps onto $\pi_1(M)$, then there are at most finitely many choices of slopes 
$\mu_\pat, \seq{\lambda_{\cpn_1},\cdots,\lambda_{\cpn_r}}$ on $\partial J$, depending only on $G$ and $J$, 
so that $(J,\mu_\pat, \seq{\lambda_{\cpn_1},\cdots,\lambda_{\cpn_r}})$ is a Seifert fibered \hyperlink{term-node}{node} 
decorating a vertex of the \hyperlink{term-rootedJSJTree}{rooted JSJ tree} for some such $M$.
\end{lemma}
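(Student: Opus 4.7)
The plan is to mirror the de-satellitation argument of Lemma~\ref{hypCl}, using Proposition~\ref{SFPieces} in place of Proposition~\ref{hypPieces}. By the remark following Definition~\ref{node}, once the children longitudes $\seq{\lambda_{\cpn_1},\cdots,\lambda_{\cpn_r}}$ are specified, the parent meridian $\mu_\pat$ takes only finitely many values for Seifert fibered $J$ (it is either determined outright in the key-chain and torus-knot cases, or determined by $\lambda_{\cpn_1}$ in the cable case), so it suffices to bound the children longitudes. The case where $J$ is a torus-knot piece is immediate since there are no children, and I reduce to the case where $J$ is a key-chain or cable piece.

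Suppose for contradiction there exist $\pi_1$-surjective maps $f_n:K\to M_n$, where $K$ is a finite presentation $2$-complex of $G$, such that $J$ is a Seifert fibered piece of each $M_n$ with children and parent boundaries compatible with the rooted JSJ tree, and the tuples $\seq{\lambda_{\cpn_1,n},\cdots,\lambda_{\cpn_r,n}}$ are pairwise distinct. After passing to a subsequence and re-indexing the children boundaries as in the proof of Lemma~\ref{hypCl}, I may assume that $\lambda_{\cpn_1,n}$ itself ranges over infinitely many distinct slopes on $\partial_{\cpn_1}J$. I then form the de-satellited knot complement $M'_n=M_{\pat,n}\cup J'_n\cup M_{\cpn_2,n}\cup\cdots\cup M_{\cpn_r,n}$, where $J'_n$ is the Dehn filling of $J$ along $\lambda_{\cpn_1,n}$, together with the degree-one collapse $\alpha_n:M_n\to M'_n$ defined exactly as in Lemma~\ref{hypCl}; then $f'_n=\alpha_n\circ f_n:K\to M'_n$ is $\pi_1$-surjective, so $G$ surjects onto $\pi_1(M'_n)$, and $M'_n$ is again a knot complement.

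For all but at most one value of $n$, the slope $\lambda_{\cpn_1,n}$ is distinct from the Seifert fiber direction of $J$ on $\partial_{\cpn_1}J$, so the Seifert fibering of $J$ extends through the filling and $J'_n$ is Seifert fibered. Moreover, since filling $\partial_{\cpn_1}J$ does not alter the Seifert fibering on $\partial_\pat J$, the fibering there continues to differ from that of the adjacent pattern piece $M_{\pat,n}$ (otherwise $J$ would already have been amalgamated in the JSJ decomposition of $M_n$); hence $J'_n$ remains a JSJ piece of $M'_n$. By Proposition~\ref{SFPieces}, the possible homeomorphism types of $J'_n$ range over a finite set, so the Seifert invariants of $J'_n$ lie in a finite set. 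However, the filling slope $\lambda_{\cpn_1,n}$ is recovered (up to the finite fiber-preserving mapping class group of $J$ respecting the parent/children boundary labels) from the Seifert invariant of the newly introduced exceptional fiber, namely the intersection number $s_n=|\lambda_{\cpn_1,n}\cdot\phi_1|$ with the fiber slope $\phi_1$ on $\partial_{\cpn_1}J$ together with the residue class of $\lambda_{\cpn_1,n}$ modulo $s_n\phi_1$. This forces $\lambda_{\cpn_1,n}$ to take only finitely many values, contradicting the distinctness assumption.

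The main obstacle is the final identification step: verifying that bounded Seifert invariants of $J'_n$ force $\lambda_{\cpn_1,n}$ to lie in a finite set of slopes up to finite ambiguity. This requires careful bookkeeping of how the filling slope determines the Seifert invariant of the new exceptional fiber and of how the finite group of Seifert-fibering automorphisms of $J$ (preserving the partition $\partial J=\partial_\pat J\sqcup\partial_{\cpn_1}J\sqcup\cdots\sqcup\partial_{\cpn_r}J$) acts on slopes on $\partial_{\cpn_1}J$.
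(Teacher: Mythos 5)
Your proposal correctly identifies the de-satellitation strategy from Lemma~\ref{hypCl} as the right framework, and the reduction to bounding children longitudes is sound. But the argument diverges from the paper's in ways that create a genuine gap, one that you yourself flag at the end.

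The paper short-circuits most of the difficulty at the outset: it observes that for a key-chain node the children longitudes are \emph{uniquely determined up to orientation} by the node constraint, and a torus-knot node has no children longitude, so de-satellitation is needed only for cable pieces. Your proposal instead tries to run de-satellitation uniformly for key-chain and cable pieces; this is unnecessary and, incidentally, not quite safe for key-chains (e.g.\ for a key-chain with $r=2$ children the de-satellited piece is $T^2\times I$, which is not a JSJ piece, so the claim ``$J'_n$ remains a JSJ piece'' is false in that case --- though it is harmless there because there is only one longitude to begin with). More substantively, for the cable case the paper exploits the explicit parametrization: a $p/q$-cable piece has homeomorphism type determined by $q$ and $p\bmod q$, its children longitude is parametrized by $p$ once $q$ is fixed, and the de-satellitation produces precisely the $p/q$-torus-knot complement. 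The ``in fact'' clause of Proposition~\ref{SFPieces} then bounds the allowable $(p,q)$ for torus-knot pieces directly, so $p$ (hence the children longitude) takes finitely many values. No reconstruction of the slope from abstract Seifert invariants is needed.

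This last point is where your proposal has a real gap, which you acknowledge as the ``main obstacle.'' Your suggested invariant --- the intersection number $s_n=|\lambda_{\cpn_1,n}\cdot\phi_1|$ of the filling slope with the fiber, ``the Seifert invariant of the newly introduced exceptional fiber'' --- is problematic: as you note in the remark to Definition~\ref{node}, the admissible slopes on a fixed cable piece are tied to one another through Dehn twists along the vertical annulus, and the relevant bookkeeping of how the Seifert data of $J'_n$ encodes $\lambda_{\cpn_1,n}$ (versus being absorbed into a re-choice of cross-section or an action of the fiber-preserving mapping classes) is exactly what needs to be pinned down. The paper avoids all of this by identifying the filled piece as the $p/q$-torus-knot complement and reading off $p$ directly. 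Without carrying out the identification step you describe, the proof is incomplete; the paper's explicit route is what closes it.
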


\begin{proof} Among the three possible families of Seifert fibered \hyperlink{term-node}{nodes} (cf. Proposition \ref{knotData}), 
the choice of \hyperlink{term-node}{children longitude} is uniquely determined up to orientation for key-chain \hyperlink{term-node}{nodes}, 
and there is no \hyperlink{term-node}{children longitude} for a torus-knot \hyperlink{term-node}{node}. 
It remains to bound the allowable choices of \hyperlink{term-node}{children longitudes} for an allowable cable \hyperlink{term-piece}{piece}. 
As before, there will be only finitely many possible choices of \hyperlink{term-node}{parent meridians} for a give choice of \hyperlink{term-node}{children longitudes}. 
Note the homeomorphism type of a $p/q$-cable \hyperlink{term-piece}{piece} ($p,q$ coprime, $q>1$) 
is determined by $q$ and the residual class of $p\bmod q$, and the \hyperlink{term-node}{children longitude} 
is determined by the integer $p$ provided $q$.
Thus it suffices to bound the allowable values of $p$ as the allowable values of $q$ are already bounded by Proposition \ref{SFPieces}.

To see this, let $G$ is a finitely presented group with $b_1(G)=1$, represented by a finite presentation $2$-complex $K$.
Let $J$ be an orientable compact $3$-manifold homeomorphic to a cable \hyperlink{term-piece}{piece} of some knot complement $M$ as assumed. There is a unique choice of the parent (i.e. pattern) component and the child (i.e. companion) component, $\partial J=\partial_\cpn J\sqcup\partial_\pat J$. Let $M=M_\pat\cup J\cup M_\cpn$ where $M_\pat$, $M_\cpn$ are the components of $M-J$ adjacent to $J$ along $\partial_\pat J$, $\partial_\cpn J$, respectively. If $J$ is realized as a $p/q$-cable in $M$, let $\lambda_\cpn\subset\partial_\cpn J$ be the \hyperlink{term-node}{child longitude} realized in $M$, and $J'$ be the Dehn filling of $J$ along $\lambda_\cpn $, and $M'=M_\pat\cup J'$. Then $M'$ is still a knot complement and there is a `de-satellitation' map:
$$\alpha:M\to M',$$
induced by the abelianization $\pi_1(M_\cpn)\to\ZZ$. Because $\alpha$ is degree-one and hence $\pi_1$-surjective, $f'=\alpha\circ f:K\to M'$ is also $\pi_1$-surjective. However, now $J'$ is a $p/q$-torus-knot complement, so there are at most finitely many allowable values of $p$ by Proposition \ref{SFPieces}.
\end{proof}

\begin{proof}[{Proof of Theorem \ref{main}}]
First, we reduce from the case that $G$ is a finitely generated group with $b_1(G)=1$ to the case that
$G$ is finitely presented.  For any presentation $\mathcal{P} = (x_1,\ldots,x_n;r_1,\ldots,r_m, \ldots)$ of $G$, we may choose a finite
collection of relators $\{r_1,\ldots, r_k\}$ such that the group $G' = \langle x_1, \ldots,  x_n;  r_1, \ldots,  r_k\rangle$ has $b_1(G')=1$. 
If $G'$ has only finitely many homomorphisms to knot groups, then so does $G$, since we have an epimorphism $G'\to G$. 
We thank Jack Button for pointing out this observation to us. Thus, we may assume that $G$ is finitely presented. 

By Lemma \ref{pieceNumber}, there are at most finitely many allowable isomorphism types of the \hyperlink{term-rootedJSJTree}{rooted JSJ tree} of $M$ (Definition \ref{rootedJSJ}) under the assumption in the statement. By Propositions \ref{hypPieces}, \ref{SFPieces}, there are at most finitely many allowable homeomorphism types of JSJ \hyperlink{term-piece}{pieces}, and by Lemmas \ref{hypCl}, \ref{SFCl}, each of them allows at most finitely many choices of \hyperlink{term-node}{children longitudes} and \hyperlink{term-node}{parent meridian}. Hence there are at most finitely many allowable isomorphism types of compatible geometric \hyperlink{term-node}{nodes}. By Proposition \ref{knotData}, we conclude that there are at most finitely many allowable homeomorphisms types of knot complements $M$ as assumed.
\end{proof}

\section{A diameter bound for closed hyperbolic 3-manifolds} \label{Sec-diamBound}

In this section, we generalize and improve the diameter bound for closed hyperbolic $3$-manifolds obtained in \cite{Wh}.

\begin{theorem}\label{diamBound} There exists a universal constant $C>0$, 
such that for any orientable closed hyperbolic 3-manifold $M$, the following statements are true.

\medskip\noindent(1) The diameter of $M$ is bounded by $C\,\ell(G)$ for any finitely presented group $G$ with $b_1(G)=0$ if $G$ maps onto $\pi_1(M)$.

\medskip\noindent(2) The diameter of $M$ is also bounded by $C\,\ell(G)$ for $G=\pi_1(M)$.\end{theorem}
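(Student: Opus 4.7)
The plan is to combine the volume bound of Section~\ref{Sec-volume} with an analysis of Margulis tubes controlled by Theorem~\ref{factor-hyp}. Take a triangular presentation $2$-complex $K$ of $G$ realizing $\ell(G)$ and a PL map $f:K\to M$ realizing $\phi$, homotoped to be totally geodesic on each $2$-simplex so that $\Area(f(K))\leq\pi\ell(G)$. In both cases we obtain $\Vol(M)\leq\pi\ell(G)$: for part~(2) by Cooper's original bound, and for part~(1) by an extension of Theorem~\ref{volBound} to $b_1(G)=0$---its proof via Proposition~\ref{elemCplment} only uses $b_1(K)\leq 1$ to contradict $b_1(\tilde N)\geq 2$, so goes through verbatim when $b_1(K)=0$.

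Decompose $M$ into its $\epsilon_3$-thick and $\epsilon_3$-thin parts. A Bishop--Gromov packing argument bounds the diameter of the thick part by $C_1\Vol(M)$ for a universal $C_1$. The thin part is a disjoint union of Margulis tubes $V_1,\dots,V_s$ around short closed geodesics $\gamma_1,\dots,\gamma_s$ of radii $r_1,\dots,r_s$. Since $\Vol(V_i)\geq c\sinh(r_i)$ for a universal $c>0$, the bound $\sum_i\sinh(r_i)\leq c^{-1}\Vol(M)$ together with concavity of $\log$ yields $\sum_i r_i=O(\Vol(M))=O(\ell(G))$. This gives the desired linear bound on $\diam(M)$ provided each individual $r_i$ is controlled in terms of $\ell(G)$ alone, which is where Theorem~\ref{factor-hyp} enters.

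Suppose for contradiction that $\pi\sinh^2(r_i)>A(\ell(G))$ for some $i$. By Theorem~\ref{factor-hyp}, $\phi=\iota^e\circ\phi^e$ factors through $\pi_1(N^{e(\zeta,m)})$, $N=M-\gamma_i$, $m\leq T(\ell(G))$. For part~(1), consider the cover $\kappa:\tilde N^e\to N^e$ corresponding to $\mathrm{Im}(\phi^e)$: since $\mathrm{Im}(\phi^e)$ is a quotient of $G$, we have $b_1(\tilde N^e)\leq b_1(G)=0$. Yet, as in the proof of Proposition~\ref{hypPieces}, a Scott core of $\tilde N^e$ (Proposition~\ref{ScottCore}) must contain a non-elementary hyperbolic chunk $Q$ covering $Y=M-\gamma_i$---otherwise $\pi_1$-surjectivity of $\phi$ fails---whence $\chi(\tilde N^e)\leq\chi(Q)<0$ forces $b_1(\tilde N^e)\geq 2$, a contradiction.

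For part~(2), $\iota^e\circ\phi^e=\mathrm{id}_{\pi_1(M)}$ makes $\phi^e$ injective, exhibiting $\pi_1(M)$ as a retract of $\pi_1(N^e)=\pi_1(N)\ast_P(P+\ZZ\zeta/m)$. Since $\pi_1(M)$ is word-hyperbolic with no $\ZZ^2$ subgroup and does not split over abelian subgroups ($M$ being closed and atoroidal), Bass--Serre theory forces $\phi^e(\pi_1(M))$ into a conjugate of the non-abelian vertex group $\pi_1(N)$, so $\pi_1(M)$ is a retract of $\pi_1(N)$. The corresponding infinite-index cover $\tilde N\to N$ is a connected non-compact $3$-manifold with $\pi_1(\tilde N)\cong\pi_1(M)$; its Scott core $C$ is a compact aspherical $3$-manifold homotopy equivalent to the closed $3$-manifold $M$, so $H_3(C;\ZZ)=\ZZ$ and $\partial C=\emptyset$. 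But then $C$ is both compact and clopen in the connected non-compact interior of $\tilde N$, a contradiction. Hence $r_i=O(\ell(G))$ in both cases, completing the bound. The main obstacle is the Bass--Serre argument for part~(2), particularly verifying the non-splitting of $\pi_1(M)$ over the rank-two abelian edge group $P$, which follows from $\pi_1(M)$ being word-hyperbolic and hence containing no $\ZZ^2$ subgroup.
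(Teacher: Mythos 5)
Your part (1) is essentially the paper's argument: the thick part is handled by a ball-packing bound together with the volume bound $\Vol(M)\leq\pi\ell(G)$ (extending Theorem~\ref{volBound} to $b_1(G)=0$, which the paper also uses implicitly), and for each thin tube you apply Theorem~\ref{factor-hyp} to produce a Dehn extension and derive $b_1(\tilde N^e)\geq 2$ from the Scott-core/Euler-characteristic argument of Proposition~\ref{hypPieces}, contradicting $b_1(G)=0$. Your aside that $\sum_i r_i=O(\Vol(M))$ by concavity of $\log$ is unnecessary --- the argument bounds each $r_i$ individually and then uses $\diam(M)\leq \diam(M_\epsilon)+2\max_i\diam(V_i)$ --- but it does no harm.

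Part (2), however, departs from the paper and has a genuine gap. The paper simply observes that $\iota^e\circ\kappa_\sharp\circ\tilde\phi^e$ is an isomorphism on $\pi_1$, hence on $H_3(-;\QQ)$, yet factors through $H_3(\pi_1(N^e);\QQ)\cong H_3(N^e;\QQ)\cong H_3(N;\QQ)=0$ (because $N^e$ is aspherical and $N$ has nonempty boundary), which is an immediate contradiction. Your route instead makes $\phi^e$ an injection of $\pi_1(M)$ into the amalgam $\pi_1(N)\ast_P\tilde P$ and invokes Bass--Serre theory. The problem is your justification that ``$\pi_1(M)$ does not split over abelian subgroups ($M$ being closed and atoroidal)''. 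Atoroidality only rules out essential $\ZZ^2$-splittings, and ``no $\ZZ^2$ subgroup'' only tells you the edge stabilizers $\pi_1(M)\cap gPg^{-1}$ of the induced action on the Bass--Serre tree are trivial or infinite cyclic, not that they are absent. One-endedness kills the trivial case, but to exclude a $\ZZ$-splitting of $\pi_1(M)$ you need a substantially deeper input --- for instance Bowditch's theorem that a one-ended hyperbolic group splits over a two-ended subgroup if and only if its Gromov boundary has a local cut point, applied to $\partial\pi_1(M)\cong S^2$, or the $PD(3)$-group fact that edge groups of such splittings must themselves be $PD(2)$. Once that is supplied (and once you note that $\phi^e(\pi_1(M))$ has infinite index in $\pi_1(N)$ because $\pi_1(N)$ contains $\ZZ^2$ while $\pi_1(M)$ does not), the Scott-core clopen contradiction you describe does close the argument. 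But as written the Bass--Serre step is under-justified, and the paper's $H_3(-;\QQ)$ argument is both shorter and avoids the issue entirely --- it is worth adopting.
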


 
\begin{proof} (1) Let $\epsilon=\epsilon_3>0$ be the Margulis constant of $\HH^3$. 
Let $M=M_\epsilon\cup V_1\cup\cdots\cup V_s$ where $M_\epsilon$ is the $\epsilon$-thick part, 
and $V_i$'s are the components of the $\epsilon$-thin parts, which are homeomorphic to solid tori. 

To bound the diameter of $M_\epsilon$, pick a maximal collection of pairwise disjoint balls of radii $\frac{\epsilon}{2}$. 
Then the centers of the balls form an $\epsilon$-net of $M_\epsilon$. 
In particular, $\diam(M_\epsilon)$ is bounded by $2\epsilon$ times the number of balls. 
On the other hand, writing $\omega$ for $\pi(\sinh(\epsilon)-\epsilon)$, 
i.e. the volume of a hyperbolic ball of radius $\frac{\epsilon}{2}$, 
the number of balls is at most $\frac{\Vol(M_\epsilon)}{\omega}\leq\frac{\Vol(M)}{\omega}$, 
which is bounded by $\frac{\pi\ell(G)}{\omega}$ by Theorem \ref{volBound}. 
We have:
$$\diam(M_\epsilon)\leq \frac{2\pi\epsilon\ell(G)}{\omega}=C_1\,\ell(G),$$
where $C_1=\frac{2\pi\epsilon}{\omega}$.

To bound the diameter of the thin tubes, let $V=V_i$ be a component. 
The core loop of $V$ is a simple closed geodesic $\gamma$. 
If the length of $\gamma$ were so short that the tube radius of $V$ satisfies $\pi\sinh^2(r)>A(\ell(G))$, 
where $A(n)=27^n(9n^2+4n)\pi$, (cf. Lemma \ref{smallGen}), 
then by Theorem \ref{factor-hyp}, the assumed epimorphism $\phi:G\to \pi_1(M)$ 
would factorize through some \hyperlink{term-extDrilling}{extended drilling} $\pi_1(N^e)$ 
where $N=M-\gamma$, as $\phi=\iota^e\circ\phi^e$, 
where $\iota^e:\pi_1(N^e)\to M$ is the \hyperlink{term-extDehnFilling}{extended Dehn filling} epimorphism and $\phi^e:G\to\pi_1(N^e)$. 
Consider the covering $\kappa:\tilde{N}^e\to N^e$ corresponding to ${\rm Im}(\phi^e)$, 
and an argument similar to Proposition \ref{hypPieces} would give a contradiction. 
Note assuming $b_1(G)=0$ is necessary here since after drilling one can only conclude $b_1(N^e)>0$. 
Indeed, assuming $b_1(G)=1$ would not work, for example, 
any one-cusped hyperbolic $3$-manifold maps $\pi_1$-onto infinitely many Dehn fillings whose diameters can be arbitrarily large. 
The contradiction implies the tube radius $r$ satisfies: 
\begin{eqnarray*}
r&\leq& {\rm arcsinh} \sqrt{27^{\ell(G)}(9\ell(G)^2+4\ell(G))}\\
&<&\ln \left[(1+\sqrt{2})\cdot\sqrt{27^{\ell(G)}(9\ell(G)^2+4\ell(G))}\right]\\
&<&C_2\,\ell(G),
\end{eqnarray*}
for some constant $C_2>0$. We have $\diam(V)<{\rm Length}(\gamma)+2r<\epsilon+2C_2\,\ell(G)$.

Combining the bounds for the different parts, we have 
$$\diam(M)\leq\diam(M_\epsilon)+2\,\max_{1\leq i\leq s}\,\diam(V_i)<2\epsilon+(C_1+2C_2)\ell(G)<C\,\ell(G),$$ 
for some sufficiently large universal constant $C>0$.

(2) The proof of (1) works for this situation after two modifications. 
First, in bounding the diameter of the thick part, we use \cite[Theorem 0.1]{Co} instead of Theorem \ref{volBound} to conclude $\Vol(M)\leq\pi\ell(G)$. 
Secondly, in bounding the diameter of the thin tubes, after the factorization $\phi=\iota^e\circ\phi^e$, 
we consider the sequence of homomorphisms: 
$$G\stackrel{\tilde{\phi}^e}\longrightarrow\pi_1(\tilde{N}^e)\stackrel{\kappa_\sharp}\longrightarrow\pi_1(N^e)\stackrel{\iota^e}\longrightarrow\pi_1(M).$$
The composition is $\pi_1$-isomorphic, so should that on $H_3(-;\QQ)$. Note $H_3(G;\QQ)\cong H_3(M;\QQ)\cong\QQ$, 
but $H_3(N^e;\QQ)\cong H_3(N;\QQ)=0$, cf. Subsection \ref{Subsec-DehnExt}. 
This is a contradiction, which in turn implies the tube radius $r$ satisfies $\pi\sinh^2(r)\leq A(\ell(G))$. The rest of the proof proceeds the same way as in (1).
\end{proof}

\section{Conclusion}

We believe that some of the techniques in this paper may have applications to questions regarding 
homomorphisms from groups to 3-manifold groups. 

\begin{question} \label{homomorphism}
For a finitely generated group $G$, is there a uniform description of all homomorphisms from $G$ to 
all 3-manifold groups? 
\end{question}
If $G$ is a free group of rank $n$, then this question is asking for a description of all 3-manifolds which have
a subgroup of rank $n$, which is probably too difficult to carry out in general. So to make progress on this
question, one would likely have to make certain restrictions on the group $G$, such as the hypothesis
in this paper of $b_1(G)=1$. The answer will likely involve factorizations through \hyperlink{term-DehnExt}{Dehn extensions}, and
may be analogous to the theory of limit groups and Makanin-Razborov diagrams \cite{Sela01}. 

More specifically, we ask for an effective version of Theorem \ref{main}:
\begin{question}
For a finitely presented group $G$ with $b_1(G)=1$, is there an algorithmic description
of all knot complements $M$
for which there is an epimorphism $G\twoheadrightarrow \pi_1(M)$, and for each
such $M$, an algorithmic description of the epimorphisms? 
\end{question}

As an aspect of Question \ref{homomorphism}, we ask:
\begin{question}
If $G$ is finitely generated (but infinitely presented), is there a finitely presented group $\hat{G}$ and an epimorphism $e: \hat{G}\to G$
such that it induces a bijection $e^*:{\rm Hom}(G,\Gamma)\to {\rm Hom}(\hat{G},\Gamma)$, for every $3$-manifold group $\Gamma=\pi_1(M)$?
\end{question}
This is true if we restrict $M$ to be hyperbolic. 

The proof of Theorem \ref{main} holds if we restrict $M$ to be a  hyperbolic knot complement in a rational homology sphere. 
The place that we used that $M$ is a knot complement in $S^3$ is in the JSJ decomposition in  
Section \ref{Subsec-SFPieces} and in bounding the companions in Section \ref{Sec-clTypes}. 
\begin{question}
If $G$ is finitely generated with $b_1(G)=1$, are there only finitely many $M$ a knot complement in a rational homology sphere
for which there is an epimorphism $G \twoheadrightarrow \pi_1(M)$? 
\end{question}

\bibliographystyle{hamsplain}
\bibliography{03-31-2011_AL-Simon_conjecture_submit}
\end{document}